\newtheorem{thm}{Theorem}[section]
\newtheorem{cor}[thm]{Corollary}
\newtheorem{prop}[thm]{Proposition}
\newtheorem{lem}[thm]{Lemma}
\theoremstyle{definition}
\newtheorem{defn}[thm]{Definition}
\newtheorem{defns}[thm]{Definitions}
\theoremstyle{remark}
\let\c@equation\c@thm
\numberwithin{equation}{section}
\title{Integer Complexity and Well-Ordering }
\author{Harry Altman}
\date{October 3, 2013}
\begin{document}

\newcommand{\cpx}[1]{\|#1\|}
\newcommand{\dft}{\delta}
\newcommand{\st}{{st}}
\newcommand{\xpdd}[1]{\hat{#1}}

\newcommand{\N}{{\mathbb N}}
\newcommand{\R}{{\mathbb R}}
\newcommand{\Z}{{\mathbb Z}}
\newcommand{\Q}{{\mathbb Q}}
\newcommand{\sS}{{\mathcal S}}

\newcommand{\floor}[1]{{\lfloor #1 \rfloor}}
\newcommand{\ceil}[1]{{\lceil #1 \ceil}}

\begin{abstract}
Define $\cpx{n}$ to be the \emph{complexity} of $n$, the smallest number of
ones needed to write $n$ using an arbitrary combination of addition and
multiplication.  John Selfridge showed that $\cpx{n}\ge 3\log_3 n$ for all $n$.
Define the \emph{defect} of $n$, denoted $\dft(n)$, to be $\cpx{n}-3\log_3 n$.
In this paper, we consider the set $\mathscr{D} := \{ \dft(n): n \ge 1 \}$ of
all defects.  We show that as a subset of the real numbers, the set
$\mathscr{D}$ is well-ordered, of order type $\omega^\omega$.  More
specifically, for $k\ge 1$ an integer, $\mathscr{D}\cap[0,k)$ has order type
$\omega^k$.  We also consider some other sets related to $\mathscr{D}$, and
show that these too are well-ordered and have order type $\omega^\omega$.
\end{abstract}

\maketitle

\section{Introduction}
\label{intro}

The \emph{complexity} of a natural number $n$ is the least number of $1$'s
needed to write it using any combination of addition and multiplication, with
the order of the operations specified using  parentheses grouped in any legal
nesting.  For instance, $n=11$ has a complexity of $8$, since it can be written
using $8$ ones as $(1+1+1)(1+1+1)+1+1$, but not with any fewer.  This notion was
implicitly introduced in 1953 by Kurt Mahler and Jan Popken \cite{MP}; they
actually considered the inverse function of the size of the largest number
representable using $k$ copies of the number $1$.  (More generally, they
considered the same question for representations using $k$ copies of a positive
real number $x$.) Integer complexity was explicitly studied by John Selfridge,
and was later popularized by Richard Guy \cite{Guy, UPINT}.  Following J. Arias
de Reyna \cite{Arias} we will denote the complexity of $n$ by $\cpx{n}$. 

Integer complexity is approximately logarithmic; it satisfies the bounds
\begin{equation}\label{eq1}
3 \log_3 n= \frac{3}{\log 3} \log  n\le \cpx{n} \le \frac{3}{\log 2} \log n  ,\qquad n>1.
\end{equation}
The lower bound can be deduced from the result of Mahler and Popken, and was
explicitly proved by John Selfridge \cite{Guy}. It is attained with equality for
$n=3^k$ for all $k \ge1$.  The upper bound can be obtained by writing $n$ in
binary and finding a representation using Horner's algorithm. It is not sharp,
and the constant $\frac{3}{\log2} $ can be improved for large $n$ \cite{upbds}.

The notion of integer complexity is similar in spirit but different in detail
from the better known measure of \emph{addition chain length}, which has
application to computation of powers, and which is discussed in detail in
Knuth \cite[Sect. 4.6.3]{TAOCP2}. One important difference between the two
notions is that integer complexity can be computed by dynamic programming,
while this does not seem to be the case for addition chain length.
Specifically, integer complexity is computable via the dynamic programming
recursion, for any $n>1$,

\begin{displaymath}
\cpx{n}=\min_{\substack{a,b<n\in \mathbb{N} \\ a+b=n\ \mathrm{or}\ ab=n}}
	\cpx{a}+\cpx{b}.
\end{displaymath}

There are many mysteries about $\cpx{n}$. For powers one has
\[ \cpx{n^k} \le k \cpx{n} \]
and it is known that $\cpx{3^k}= 3k$ for all $k \ge 1$.  However other values
have a more complicated behavior.  For instance, powers of $5$ do not work
nicely, as $\cpx{5^6}=29< 30= 6 \cpx{5}$. The behavior of powers of $2$ remains
unknown; it has been verified that 
\[
\cpx{2^k} = k \cpx{2} = 2k  ~~\mbox{for} ~~ 1\le k \le 39;
\] 
see \cite{data2}.  

\subsection{Main Result}
In an earlier paper, this author and Zelinsky \cite{paper1} introduced
the notion 
the \emph{defect} of an integer $n$, denoted $\dft(n)$, by
\[ \dft(n) := \cpx{n}-3\log_3 n. \]
This is a rescaled version of  integer complexity, which, given $n$, contains
equivalent information to $||n||$. In view of the lower bound \eqref{eq1} above
it satisfies $\dft(n)\ge 0$.  The paper \cite{paper1} exploited patterns in the
dynamic programming structure of integer complexity to classify the structure of
all integers with small values of the defect. In particular it classifies all
integers with $\dft(n) \le 1$.

The defect encodes interesting structure about integer complexity.  In this
paper, we will consider the image of this defect function in the general case:

\begin{defn}
The \emph{defect set} $\mathscr{D}\subseteq[0,\infty)$ is  the set
of all defect values
$\{\dft(n) : n\in \N\}$.
\end{defn}

Addition and multiplication tend to interact badly and unpredictably when
placed on an equal footing.  So one might  not expect to find any particular
sort of structure in the values of $\dft(n)$, even though its definition is
based on powers of $3$ which give the extremal case.  In this paper we will
prove the following striking result:

\begin{thm}
\label{frontpagethm}
The set $\mathscr{D}$ is a well-ordered subset of $\R$, of order type
$\omega^\omega$.  Furthermore, for $k\ge 1$ an integer, the set
$\mathscr{D}\cap[0,k)$ has order type $\omega^k$.
\end{thm}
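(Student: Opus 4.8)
The plan is to reduce the statement to the structure theory of low-defect polynomials from \cite{paper1} and then to run an induction that measures order type by counting the variables of such a polynomial that genuinely influence the defect.

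First I would record the two facts that make $\mathscr{D}$ tractable: the monotonicity $\dft(3n)\le\dft(n)$, and the resulting stabilization, namely that $\dft(3^j n)$ is nonincreasing in $j$ and (by the finiteness results of \cite{paper1}) eventually constant, so that the stable defect $\dft_\st(n):=\lim_j \dft(3^j n)$ exists and is attained. The real engine is the classification of \cite{paper1}: for each real $r$ there is a \emph{finite} set of low-defect polynomials $f$, in finitely many variables, such that every $n$ with $\dft(n)<r$ can be written as $3^{a_0}f(3^{a_1},\dots,3^{a_m})$ for one of them, with $\dft$ an explicit function of the data $(f;a_0,\dots,a_m)$ that increases toward $\dft_\st(f)$ from below as $\min_i a_i\to\infty$. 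This presents $\mathscr{D}\cap[0,k)$ as a finite union of the defect sets $D_f:=\{\dft(3^{a_0}f(3^{a_1},\dots,3^{a_m}))\}$, each intersected with $[0,k)$.

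Next I would compute the order type of a single $D_f$ by induction on the number $d(f)$ of variables that actually change the defect. Here a variable occurring only as a top-level power-of-$3$ factor does not count: for instance $D_{x_1x_2}=\{0\}$ has type $\omega^0=1$, whereas $D_{x+1}$, realized by the numbers $3^a+1$, has type $\omega$. The base case $d(f)=0$ gives a single value. For the inductive step I would slice on a defect-relevant variable $x_m=3^{a_m}$; each slice is the defect set of a polynomial with $d(f)-1$ relevant variables, hence of type $\omega^{d(f)-1}$ by induction, and as $a_m$ grows these slices march upward toward their $a_m\to\infty$ limit from below. The monotonicity and convergence supplied by \cite{paper1} then let me stack the slices as $\omega^{d(f)-1}\cdot\omega=\omega^{d(f)}$, giving $\mathrm{otp}(D_f)=\omega^{d(f)}$.

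To assemble $\mathscr{D}\cap[0,k)$ I would argue both bounds. For the lower bound I would exhibit the family $\prod_{i=1}^{k}(3^{a_i}+1)$: a direct estimate gives $\dft\le k-3\sum_i\log_3(1+3^{-a_i})\in[0,k)$, and since all $k$ variables are defect-relevant its defect set alone already realizes type $\omega^k$. For the upper bound I would show that each of the finitely many $f$ relevant to $[0,k)$ has $d(f)\le k$, since the members of such a family all have defect below $k$, forcing $\dft_\st(f)\le k$, while each defect-relevant variable forces a genuine ``$+1$'' into $f$ and hence, by an estimate from \cite{paper1}, is counted against $\dft_\st(f)$. Thus $\mathscr{D}\cap[0,k)$ is a finite union of well-ordered sets of type at most $\omega^k$. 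A careful accounting organized by the unit intervals $[j,j+1)$, using the ordinal identity $\omega^{j}+\omega^{j+1}=\omega^{j+1}$, telescopes these pieces to give type exactly $\omega^k$. Since the sets $\mathscr{D}\cap[0,k)$ are nested initial segments, $\mathscr{D}=\bigcup_k \mathscr{D}\cap[0,k)$ then has type $\sup_k\omega^k=\omega^\omega$.

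The main obstacle is the order-type bookkeeping, not the well-ordering itself (which follows once everything is confined to finitely many families). The delicate point is to turn the crude bounds $\omega^k\le\mathrm{otp}(\mathscr{D}\cap[0,k))<\omega^{k+1}$ into the exact value $\omega^k$: in the inductive step one must show the overlapping slices in $\R$ assemble to \emph{exactly} $\omega^{d(f)}$ rather than merely something between $\omega^{d(f)-1}$ and $\omega^{d(f)+1}$, and in the assembly one must show the finitely many families interleave without pushing the total past $\omega^k$, with each limit value $\dft_\st(f)$ landing at precisely the right ordinal position. This requires the sharp form of the monotonicity of $\dft$ in each relevant variable together with the exact rate of approach to $\dft_\st(f)$ from below, which is where the structural input from \cite{paper1} does the essential work.
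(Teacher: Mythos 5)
Your overall architecture matches the paper's: cover all $n$ of defect below $r$ by finitely many low-defect families with a degree bound $\lfloor r\rfloor$, compute per-family order types by induction on the number of variables, and recover the exact value $\omega^k$ by exhausting $[0,k)$ by proper initial segments (your telescoping over the unit intervals $[j,j+1)$ is a repackaging of the paper's supremum over the segments $\mathscr{D}\cap[0,s)$, $s<k$, in Theorem~\ref{rwo2}, and your lower-bound family $\prod_i(3^{a_i}+1)$ plays the role of the paper's $(\ldots(3x_1+1)x_2+1\ldots)x_k+1$). But two steps as written have genuine gaps. First, you treat $\dft(n)$ as ``an explicit function of the data $(f;a_0,\ldots,a_m)$.'' It is not: a given $n$ may be $3$-represented inefficiently, so the machinery only yields $\dft(n)\le\dft_{f,C}(a_1,\ldots,a_m)$ with a \emph{nonnegative integer} difference (Proposition~\ref{dftbd}, Corollary~\ref{augdftbd}). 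Consequently your claim that $\mathrm{otp}(D_f)=\omega^{d(f)}$ exactly is unjustified and in general false: the set of actual defects is a union of integer translates $T_0\cup(T_1-1)\cup\cdots\cup(T_\ell-\ell)$ of pieces of the image of $\dft_{f,C}$, and the paper can only place it between $\omega^r$ and $\omega^r(\lfloor\dft(f,C)\rfloor+1)<\omega^{r+1}$ (Proposition~\ref{wo2}); the exact type $\omega^r$ holds for the image of the auxiliary function $\dft_{f,C}$ (Proposition~\ref{wo1}), not for the defect set itself. This is why the paper's assembly never invokes per-family exactness, only ``at least $\omega^r$ and strictly below $\omega^{r+1}$'' together with the degree bound and the initial-segment supremum; your telescoping survives this weakening, but as written it leans on the false exact value.

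Second, your inductive step --- stacking the slices $x_m=3^{a_m}$ to get $\omega^{d(f)-1}\cdot\omega$ --- does not work, and you flag but do not repair it: the slices overlap in $\R$ (increasing another variable inside the slice at $a_m$ overtakes small values of the slice at $a_m+1$), so the union is not an ordered sum of the slices. The paper separates the two directions with different tools, both absent from your plan: the upper bound comes from well-partial-order theory ($\dft_{f,C}$ is strictly increasing in each variable by Proposition~\ref{mono}, so its image is a monotone image of $\Z_{\ge 0}^r$ and has type at most $o(\Z_{\ge 0}^r)=\omega^r$ by De Jongh--Parikh, Theorem~\ref{naturalops} with Proposition~\ref{image}), while the lower bound is a limit-point argument (Lemma~\ref{maxvar} lets one write $f=h\otimes(g\otimes x_1+c)$; letting $n_r\to\infty$ produces a translate of the image of the degree-$(r-1)$ polynomial $g\otimes h$ inside $S'$, and Proposition~\ref{limpts2} converts type at least $\omega^{r-1}$ for $S'$ into type at least $\omega^r$ for $S$). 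Without some substitute for these, the ``delicate point'' you name is exactly where the proof is. Finally, an attribution issue with mathematical content: the classification you import from \cite{paper1} --- finitely many low-defect polynomials of degree at most $\lfloor r\rfloor$ covering all numbers of defect below $r$ --- is not in \cite{paper1}; it is this paper's central construction (Theorems~\ref{mainthm} and~\ref{augmainthm}), proved by induction on $k$ with Theorem~\ref{themethod} as the engine, so your proposal assumes precisely the paper's main new ingredient rather than proving it.
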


This well-ordering of the defect set $\mathscr{D}$ reveals new fundamental
structure in the interaction between addition and multiplication.  Some of the
tangledness of that interaction may be reflected in how the set $\mathscr{D}$
grows more complicated as its elements get larger.  In fact the structure of
$\mathscr{D}$ has even more regularity than what Theorem \ref{frontpagethm}
describes, which we plan to discuss in a future paper. 

In Section~\ref{variants}, we will also prove that Theorem still holds even if
we replace $\mathscr{D}$ with any of several other closely-related sets.

Theorem~\ref{frontpagethm} is closely related to  conjectures of J.
Arias de Reyna \cite{Arias} about integer complexity.  
We discuss these conjectures and 
use our results to prove modified versions of 
some of them in Appendix~\ref{secarias}.  

In contrast to Theorem \ref{frontpagethm}, little is known about the set of
values of $\frac{\cpx{n}}{3\log_3 n}$, even though that might appear to be a
more natural object of study. An  open question is to determine
the value
\[
C_{max} := \limsup_{n \to \infty} \frac{\cpx{n}}{3\log_3 n}.
\]
The bounds \eqref{eq1} imply $1 \le C_{max} \le \log_2 3$.
It is an open problem to decide whether  $C_{max}=1$ or $C_{max}>1$ holds.

\subsection{Low-Defect Polynomials}

The strategy to prove the main theorem is to build up the set $\mathscr{D}$ by
inductively building up the sets $\mathscr{D}\cap[0,s)$ for real numbers $s>0$.
The proof of Theorem~\ref{frontpagethm} makes use of earlier work
of this author with Zelinsky \cite{paper1} classifying numbers of low defect.
The paper \cite{paper1} gave a method to list families of such integers, and
explicitly listed all integers of defect $\delta(n) < 1$. The innovation made
here is that instead of treating the output of this method as an
undifferentiated blob, we group it into tractable families.

We introduce a family of multilinear polynomials that we call \emph{low-defect
polynomials}. We show that for  any $s>0$, there exists a finite set of
low-defect polynomials $\sS_s$ such that any number of defect less than $s$ can
be written as $f(3^{n_1},\ldots,3^{n_k})3^{n_{k+1}}$ for some $f\in \sS_s$ and
nonnegative $n_1,\ldots,n_{k+1}$.  Indeed, stronger statements are true; see
Theorem~\ref{mainthm} and Theorem~\ref{augmainthm}.  Note, however, that the
low-defect polynomials may also produce extraneous numbers, with defect higher
than intended; examples of these are given after Theorem~\ref{mainthm}.  We will
remedy this deficiency in a sequel paper \cite{seq2}.

To state this another way, these low-defect polynomials provide forms into which
powers of $3$ can be substituted to obtain all the numbers below the specified
defect.  As the defects get larger, the low-defect polynomials and the families
of numbers we get this way become more complicated.  And just as we can
visualize expressions in $+$, $\times$, and $1$ as trees, we can also visualize
low-defect polynomials -- or the expressions that generate them -- as trees,
with open slots where powers of $3$ can be plugged in.  By attaching trees
corresponding to powers of $3$, we obtain trees for the numbers we get this way.
This is illustrated in Figure~\ref{treefig1} with the polynomial
$(2x_1+1)x_2+1$.  (Note, however, that this picture is not quite correct when we
plug in $3^0$; see Figure~\ref{treefig2} in section~\ref{secpolys}).
 
\begin{figure}[htb]
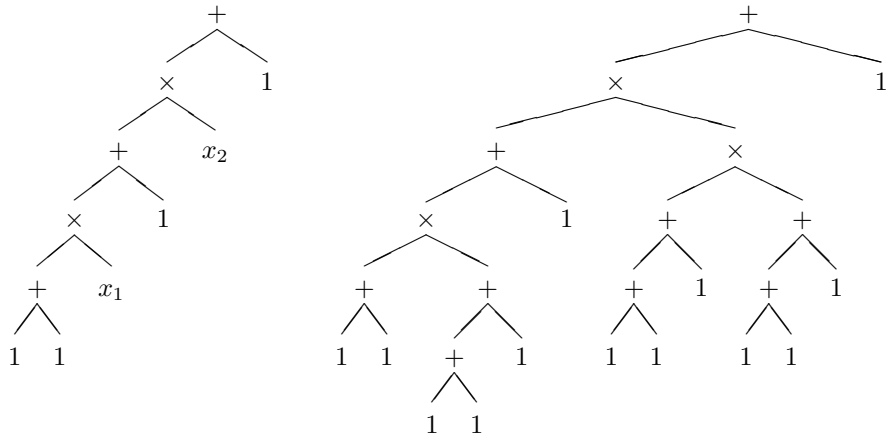

\caption{A tree corresponding to the polynomial $(2x_1+1)x_2+1$, and the same
tree after making the substitution $x_1=3^1$, $x_2=3^2$.}
\begin{tabular}{cc}
\label{treefig1}
\begin{parsetree}
( .$+$.
	( .$\times$.
		( .$+$.
			( .$\times$.
				( .$+$.
					`$1$'
					`$1$'
				)
				`$x_1$'
			)
			`$1$'
		)
		`$x_2$'
	)
	`$1$'
)
\end{parsetree}
&
\begin{parsetree}
( .$+$.
	( .$\times$.
		( .$+$.
			( .$\times$.
				( .$+$.
					`$1$'
					`$1$'
				)
				( .$+$.
					( .$+$.
						`$1$'
						`$1$'
					)
					`$1$'
				)
			)
			`$1$'
		)
		( .$\times$.
			(.$+$.
					( .$+$.
						`$1$'
						`$1$'
					)
				`$1$'
			)
			(.$+$.
					( .$+$.
						`$1$'
						`$1$'
					)
				`$1$'
			)
		)
	)
	`$1$'
)
\end{parsetree}
\end{tabular}
\end{figure}

So with this approach, we can get at properties of the set of defects by
examining properties of low-defect polynomials. For instance, as mentioned
above, as the defects involved get larger, the low-defect polynomials required
get more complicated; one way in which this occurs is that they require more
variables.  In fact, we will see (Theorem~\ref{mainthm}) that to cover defects
up to a real number $s$, one needs low-defect polynomials with up to $\lfloor s
\rfloor$ variables.  And it happens that if we have a low defect polynomial $f$
in $k$ variables, and consider the numbers $f(3^{n_1},\ldots,3^{n_k})$, then the
defects of the numbers obtained this way form a well-ordered set of order type
at least $\omega^k$ and less than $\omega^{k+1}$ (Proposition~\ref{wo2}).  It is
this that leads us to Theorem~\ref{frontpagethm}, that for $k\ge 1$, the set
$\mathscr{D}\cap[0,k)$ has order type precisely $\omega^k$.  In future
papers 
we will draw more detailed conclusions by examining the
structure of low-defect polynomials more closely.
\subsection{Variant Results}

We also prove analogues of the main theorem for several other sets.
The paper \cite{paper1} showed that given the value of $\dft(n)$, one can
determine the value of $\cpx{n}$ modulo $3$; see Theorem~\ref{oldprops}(6)
below.  It follows that one can split the set of defects $\mathscr{D}$ into sets
$\mathscr{D}^0$, $\mathscr{D}^1$, and $\mathscr{D}^2$ according to these
congruence classes modulo $3$; see Definition~\ref{dftseta}.  In
Section~\ref{variants} we prove analogues of the main theorem for each set
$\mathscr{D}^a$ separately; see Theorem~\ref{thmfinal}.

The paper \cite{paper1} also introduced a notion of stable numbers; a number
$n$ is said to be \emph{stable} if $\cpx{3^{k} n}=3k+\cpx{n}$ for all $k\ge 0$;
equivalently, if $\delta(3^{k}n) = \delta(n)$ for all $k \ge 0$.  In
Section~\ref{secstab} we show that given $\dft(n)$, one can determine whether or
not a given number $n$ is stable, and thus we can consider the set of ``stable
defects'', $\mathscr{D}_\st$, which are the defect values for all stable
numbers. 

We can combine this notion with splitting based on the value of $\cpx{n}$ modulo
$3$ to define sets $\mathscr{D}^0_\st$, $\mathscr{D}^1_\st$, and
$\mathscr{D}^2_\st$.  In Section~\ref{variants} we prove each of these sets is
well-ordered of type $\omega^{\omega}$, as are the closures of all these sets.
All these well-ordering results are collected in Theorem~\ref{thmfinal}.

\subsection{Computability Questions}

Integer complexity captures part of the complicated interaction of addition and
multiplication, where subtraction is not allowed; the underlying algebraic
structure is that of a commutative semiring $(\mathbb{N}, +, \times)$.  It is a
very simple computational model, but already exhibits difficult issues.

The model of computation treated in this paper could be considered as taking
number inputs other than $1$.  Mahler and Popken \cite{MP} considered
constructing numbers starting with copies of any fixed positive real number $x$.
Note that as $x$ varies the ordering of computed quantities on the positive real
line  will change.  One feature of complexity for $x=1$ (or for $x=k$, an
integer) is that multiple ties that can occur in doing the computations, which
complicates determination of the structure of the minimal computation tree.  For
a generic (transcendental) $x$, the complexity issue simplifies to viewing the
computation tree as computing a univariate polynomial with positive integer
coefficients, having a zero constant term.  One can assign a complexity to the
problem of computing such polynomials.  Study of this simplified problem might
be fruitful.
Allowing multiple indeterminates as inputs, we can consider the complexity of
computing multivariate polynomials, which is a much-studied topic.  The model of
computation allowing $+$ and $\times$ above can compute all multivariate
polynomials with nonnegative integer coefficients, but is restricted in that it
does not allow free reuse of polynomials already constructed. The complexity of
computation in this restricted model can be  compared to that in  other
computational models which allow additional operations beyond addition and
multiplication, or allow free reuse of already computed polynomials
(straight-line computation). 
It is much easier to compute polynomials in models with subtraction
\cite{valiant} or division \cite{FGK13} than with only addition and
multiplication \cite{burgbook, grigo, jersnir, schnorr}.  Indeed, similar
phenomena occur in the computation of integers as well as that of polynomials
\cite{borh}.

We can also ask about the computational complexity of integer complexity itself,
or related notions, viewed in the polynomial  hierarchy of complexity theory
(see Garey and Johnson \cite[Sect. 7.2]{GJ79}).  
An open question concerns the computational complexity of
computing $\cpx{n}$.  Consider the problem:\\

\noindent  INTEGER COMPLEXITY
\begin{itemize}
\item
INSTANCE: Positive integers $n$ and $k$, both encoded in binary.
\item
QUESTION: Is $\cpx{n}\le k$?
\end{itemize}
This problem is known to be in the complexity class $NP$ (Arias de Reyna
\cite{Arias}), but it  is not known to be either in $P$ or  in co-$NP$, nor 
is it known to be $NP$-complete.  

This paper introduces the ordering of defects as an object of
investigation.  Hence we can also consider the problem:\\

\noindent DEFECT ORDERING
\begin{itemize}
\item
INSTANCE: Positive integers $n_1$ and $n_2$, both encoded in binary.
\item
QUESTION: Is $\dft (n_1) \le \dft (n_2)$?
\end{itemize}
This problem, of computing the defect ordering  is not known
to be in the complexity class NP. If one could answer INTEGER COMPLEXITY  in polynomial time, then
one could also answer DEFECT ORDERING  in polynomial time.  To
show this, observe that the inequality $\dft(n_1)\le \dft(n_2)$ is equivalent to
\[ 3^{\cpx{n_1}}(n_2)^3 \le 3^{\cpx{n_2}}(n_1)^3, \]
and since $\cpx{n}$ is logarithmically small, this could be computed in
polynomial time if one knew $\cpx{n}$. This argument shows that
DEFECT ORDERING belongs to the complexity class  $P^{NP} = \Delta_2^P$.

Another question related to the defect is that of computing a set $\sS_s$ of
low-defect polynomials sufficient to describe all integers of defect
$\dft(n)<s$, i.e., a set $\sS_s$ satisfying the conditions of
Theorem~\ref{mainthm}.  What is the minimal cardinality of such a set, as a
function of $s$?  What is the complexity of computing one (say for $s$ integral,
or rational)?  The proof of Theorem~\ref{mainthm} does give a construction of
one such set $\sS_s$; however there exist other such sets $\sS_s$, perhaps some
smaller or computable more quickly than the one constructed.

\section{Properties of the defect}
\label{review}

We begin by reviewing the relevant properties of integer complexity and the
defect from \cite{paper1}.  They can be summed up in the following theorem:

\begin{thm}
\label{oldprops}
We have:
\begin{enumerate}
\item For all $n$, $\dft(n)\ge 0$.
\item For $k\ge 0$, $\dft(3^k n)\le \dft(n)$, with equality if and only if
$\cpx{3^k n}=3k+\cpx{n}$.  The difference $\dft(n)-\dft(3^k n)$ is a nonnegative integer.
\item If the difference $\dft(n)-\dft(m)$ is rational, then $n=m3^k$ for some
integer $k$ (and so $\dft(n)-\dft(m)\in\mathbb{Z}$).
\item Given any $n$, there exists $L$ such that for all $k\ge L$, $\dft(3^k
n)=\dft(3^L n)$.  That is to say, $\cpx{3^k n}=\cpx{3^L n}+3(k-L)$.
\item For a given defect $\alpha$, the set $\{m: \dft(m)=\alpha \}$ has either
the form $\{n3^k : 0\le k\le L\}$ for some $n$ and $L$, or the form $\{n3^k :
0\le k\}$ for some $n$.  This latter occurs if and only if $\alpha$ is the
smallest defect among $\dft(3^k n)$ for $k\in \mathbb{Z}$.
\item If $\dft(n)=\dft(m)$, then $\cpx{n}=\cpx{m} \pmod{3}$.
\item $\dft(1)=1$, and for $k\ge 1$, $\dft(3^k)=0$.  No other integers occur as
$\dft(n)$ for any $n$.
\end{enumerate}
\end{thm}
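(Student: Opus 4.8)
The plan is to reduce all seven parts to two elementary facts about complexity together with one bookkeeping identity for the defect. The facts are Selfridge's lower bound $\cpx{n}\ge 3\log_3 n$ from \eqref{eq1}, and the evaluation $\cpx{3^k}=3k$ for $k\ge 1$; the latter follows by combining the subadditive estimate $\cpx{ab}\le\cpx{a}+\cpx{b}$ (concatenate optimal representations) with Selfridge's bound applied to $3^k$. The identity comes from expanding the definition: since $3\log_3(3^k n)=3k+3\log_3 n$, we get $\dft(n)-\dft(3^k n)=\cpx{n}+3k-\cpx{3^k n}$, which is visibly an integer. Part~(1) is then just Selfridge's bound. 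For part~(2), the subadditive estimate gives $\cpx{3^k n}\le\cpx{3^k}+\cpx{n}=3k+\cpx{n}$, so the integer $\cpx{n}+3k-\cpx{3^k n}$ is nonnegative, i.e.\ $\dft(3^k n)\le\dft(n)$, with equality exactly when $\cpx{3^k n}=3k+\cpx{n}$. Part~(4) follows by a stabilization argument: the sequence $\dft(3^k n)$ is nonincreasing and bounded below by $0$ by parts~(1)--(2), while the nonnegative integers $\dft(n)-\dft(3^k n)$ are nondecreasing in $k$ and bounded above by $\dft(n)$, hence eventually constant.

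The arithmetic heart is part~(3). Writing $\dft(n)-\dft(m)=(\cpx{n}-\cpx{m})-3\log_3(n/m)$, rationality of the left side forces $\log_3(n/m)\in\Q$, and unique factorization then forces the positive rational $n/m$ to be an integer power of $3$, say $n=m3^k$; the displayed expression then lies in $\Z$. Part~(6) combines this with part~(2): if $\dft(n)=\dft(m)$ then $m=n3^k$, and the equality case of part~(2) gives $\cpx{m}-\cpx{n}=3k\equiv 0\pmod 3$. Part~(5) is the structural consequence of (3)+(2)+(4): by part~(3) any two numbers of defect $\alpha$ lie in a single chain $\{n_0 3^k\}$ with $n_0$ coprime to $3$, and along this chain the defect is nonincreasing and eventually constant, so the level set $\{m:\dft(m)=\alpha\}$ is an interval of the chain, namely $\{n3^k:0\le k\le L\}$ or $\{n3^k:0\le k\}$; the infinite case occurs precisely when $\alpha$ is the minimal (stable) value along the chain. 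Finally, for part~(7): if $\dft(n)\in\Z$ then $3\log_3 n=\cpx{n}-\dft(n)\in\Z$, so $n^3$ is a power of $3$ and hence $n=3^a$; direct evaluation gives $\dft(1)=1$, $\dft(3^k)=0$ for $k\ge 1$, and no other integers occur.

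I do not expect a deep obstacle here, since the substantive work is already carried out in \cite{paper1} and the present statement only repackages it. The two places demanding genuine care are the unique-factorization step in part~(3) --- this is what makes defect differences ``arithmetic'' (rational only along powers of $3$) rather than merely real, and it underlies parts~(5) and~(6) --- and the dichotomy in part~(5), where one must verify that the infinite branch corresponds exactly to the minimal defect on the chain rather than to some intermediate value.
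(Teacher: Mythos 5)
Your proposal is correct, but it takes a genuinely different route from the paper: the paper's proof of this theorem is essentially a citation proof, delegating parts (2)--(6) to specific results of \cite{paper1} (Proposition~9(3), Proposition~14, and Theorem~5 there) and only arguing directly for parts (1) and (7), whereas you rebuild everything from three primitives --- Selfridge's bound, the subadditivity $\cpx{ab}\le\cpx{a}+\cpx{b}$ (which gives $\cpx{3^k}=3k$), and the bookkeeping identity $\dft(n)-\dft(3^kn)=\cpx{n}+3k-\cpx{3^kn}$. Your derivations check out: the unique-factorization step in (3) is sound (if $\log_3(n/m)\in\Q$ then $(n/m)^q=3^p$ forces $n/m$ to be an integer power of $3$), the stabilization argument for (4) correctly exploits that $\dft(n)-\dft(3^kn)$ is a nondecreasing integer sequence bounded by $\dft(n)$, and in (5) you correctly observe that level sets of a nonincreasing, eventually constant sequence along the chain $\{n_03^j\}$ are intervals, infinite exactly at the minimal value; this matches the dichotomy as stated. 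Your part (7) differs mildly in detail from the paper's: you argue directly that $\dft(n)\in\Z$ forces $3\log_3 n\in\Z$, hence $n=3^a$ by unique factorization, while the paper deduces the same from its part (3) applied against $m=3$; the two are equivalent in substance. What your approach buys is self-containedness --- the theorem becomes provable from first principles in a page, making visible that the entire statement rests only on the lower bound, subadditivity, and the arithmetic of powers of $3$; what the paper's approach buys is economy and consistency, since \cite{paper1} proves these facts once with full care (including edge cases like $k=0$ and the leader normalization in (5)) and this paper merely needs to assemble them.
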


\begin{proof}
Part (1) is just Selfridge's lower bound \cite{Guy}.  The first statement in
part (2) is Proposition~9(3) from \cite{paper1}; the second statement follows
from the computation $\dft(n)-\dft(3^k n)= \cpx{n}-\cpx{3^k n}+3k$.  Part (3)
is Proposition~14(1) from \cite{paper1}.  Parts (4) and (5) are Theorem~5
from \cite{paper1}.  Part (6) is part of Proposition~14(2) from \cite{paper1}.
For part (7), the fact that $\dft(1)=1$ is immediate.  The fact that
$\dft(3^k)=0$ for $k\ge 1$ is the same as the fact that $\cpx{3^k}=3k$ for
$k\ge 1$; that $\cpx{3^k}\le 3k$ is obvious, and that $\cpx{3^k}\ge 3k$ follows
from Selfridge's lower bound \cite{Guy}.  Finally, that no other integers occur
as $\dft(n)$ for any $n$ follows from part (3).
\end{proof}

We also recall the definitions made for discussing the above:

\begin{defn}
A number $m$ is called \emph{stable} if $\cpx{3^k m}=3k+\cpx{m}$ holds for every
$k \ge 1$, or equivalently if $\dft(3^k m)=\dft(m)$ for every $k\ge 1$.
Otherwise it is called \emph{unstable}.
\end{defn}

\begin{defn}
A natural number $n$ is called a \emph{leader} if it is the smallest number with
a given defect.  By part (5) of Theorem~\ref{oldprops}, this is equivalent to
saying that either $3\nmid n$, or, if $3\mid n$, then $\dft(n)<\dft(n/3)$, i.e.,
$\cpx{n}<3+\cpx{n/3}$.
\end{defn}

Also, because of part (6) of Theorem~\ref{oldprops}, we can make the following
definitions:

\begin{defn}
\label{dftseta}
For $a$ a congruence class modulo $3$, we define
\[ \mathscr{D}^a = \{ \dft(n) : \cpx{n}\equiv a\pmod{3},~~n\ne 1\} \]
\end{defn}

We explicitly exclude the number $1$ here as it is dissimilar to other numbers
whose complexity is congruent to $1$ modulo $3$.  This is because, unlike other
numbers which are $1$ modulo $3$, the number $1$ cannot be written as $3j+4$ for
some $j$, and so the largest number that can be made with a single $1$ is simply
$1$, rather than $4\cdot 3^j$ (see Appendix~\ref{secarias}).  For this reason,
numbers of complexity $1$ do not really go together with other numbers whose
complexity is congruent to $1$ modulo $3$; however, the only such number is $1$,
so we simply explicitly exclude it.  So $\mathscr{D}$ is the disjoint union of
$\mathscr{D}^0$, $\mathscr{D}^1$, $\mathscr{D}^2$, and $\{1\}$.

Of course, we care not just about small defects, but about the numbers giving
rise to those small defects; so we recall the following definitions:

\begin{defn}
For any real $r\ge0$, define the set of {\em $r$-defect numbers} $A_r$ to be 
$$A_r := \{n\in\mathbb{N}:\dft(n)<r\}.$$
Define the set of {\em $r$-defect leaders} $B_r$ to be 
$$
B_r:= \{n \in A_r :~~n~~\mbox{is a leader}\}.
$$
\end{defn}

These sets are related by:
\begin{prop}
\label{arbr}
For every $n\in A_r$, there exists a unique $m\in B_r$ and $k\ge 0$ such that
$n=3^k m$ and $\dft(n)=\dft(m)$; then $\cpx{n}=\cpx{m}+3k$.
\end{prop}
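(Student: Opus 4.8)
The plan is to produce $m$ as the unique leader lying ``below'' $n$ in its power-of-$3$ orbit and then to read off all three conclusions directly from Theorem~\ref{oldprops}. Write $n = 3^a n_0$ with $3 \nmid n_0$. By part~(2), multiplying by a power of $3$ never increases the defect, so the finite sequence $\dft(n_0) \ge \dft(3 n_0) \ge \cdots \ge \dft(3^a n_0) = \dft(n)$ is nonincreasing. Let $j$ be the least index with $\dft(3^j n_0) = \dft(n)$; such a $j$ exists and satisfies $0 \le j \le a$, since $a$ itself is one such index. Set $m := 3^j n_0$ and $k := a - j \ge 0$, so that $n = 3^k m$ and $\dft(m) = \dft(n)$.

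Next I would verify that $m$ is a leader. If $j = 0$ then $3 \nmid m$; if $j > 0$ then the minimality of $j$ gives $\dft(m) = \dft(n) < \dft(3^{j-1} n_0) = \dft(m/3)$. Either way $m$ satisfies the criterion in the definition of leader. Since $\dft(m) = \dft(n) < r$, we have $m \in A_r$, and being a leader, $m \in B_r$. For the complexity relation, observe that $\dft(3^k m) = \dft(m)$ is precisely the equality case of part~(2), which by that same part is equivalent to $\cpx{3^k m} = 3k + \cpx{m}$; that is, $\cpx{n} = \cpx{m} + 3k$.

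For uniqueness, suppose also $n = 3^{k'} m'$ with $m' \in B_r$ and $\dft(m') = \dft(n)$. Then $m'$ is a leader with $\dft(m') = \dft(m) = \dft(n)$. Since by definition a leader is \emph{the} smallest integer of its defect, both $m$ and $m'$ coincide with the least integer of defect $\dft(n)$, forcing $m' = m$ and hence $k' = k$. (Equivalently, this uniqueness, together with the fact that $n$ lies in the power-of-$3$ orbit of its leader, is immediate from the explicit description of the fibers $\{\,m : \dft(m) = \alpha\,\}$ given in part~(5).)

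There is no deep obstacle here: essentially all the content is already packaged in Theorem~\ref{oldprops}, and the argument is one of bookkeeping. The two points I would be most careful about are matching the ``smallest number of a given defect'' characterization of a leader against the divisibility $m \mid n$ produced by the descent, so that the $m$ I construct really is the leader the statement demands, and confirming that the defect is genuinely constant from $m$ all the way up to $n = 3^k m$, so that the equality case of part~(2) applies and yields the additive complexity formula.
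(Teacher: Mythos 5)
Your proof is correct, but it takes a different route from the paper: the paper disposes of the existence-and-uniqueness part in one line by citing Proposition~16(2) of \cite{paper1}, and then obtains $\cpx{n}=\cpx{m}+3k$ by the direct computation $\cpx{n}=\dft(n)+3\log_3(3^k m)=3k+\dft(m)+3\log_3 m=\cpx{m}+3k$. You instead reconstruct the cited fact from scratch inside this paper: the descent $n=3^a n_0$ with $3\nmid n_0$, the nonincreasing chain $\dft(n_0)\ge\dft(3n_0)\ge\cdots$ from part~(2) of Theorem~\ref{oldprops}, the minimal index $j$ giving a leader via the criterion ``$3\nmid m$ or $\dft(m)<\dft(m/3)$,'' and uniqueness from the fact that the leader of a given defect value is by definition the smallest (equivalently, from the fiber description in part~(5)). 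Your derivation of the complexity relation also differs in flavor, invoking the equality case of part~(2) ($\dft(3^k m)=\dft(m)$ iff $\cpx{3^k m}=3k+\cpx{m}$) rather than the logarithmic identity, though the two are essentially the same computation packaged differently. What your version buys is self-containedness: everything is deduced from Theorem~\ref{oldprops} as stated here, with no appeal to the earlier paper; what the paper's version buys is brevity, at the cost of the reader having to trust or consult \cite{paper1}. All the steps you flag as delicate---that the constructed $m$ really meets the leader criterion, and that the defect is constant along the whole segment from $m$ to $n$ so the equality case applies---are handled correctly.
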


\begin{proof}
The first part of this is Proposition 16(2) from \cite{paper1}.  The second
part follows as then
\[ \cpx{n}=\dft(n)+3\log_3(3^k m)=3k+\dft(m)+3\log_3 m =\cpx{m}+3k.\]
\end{proof}

\subsection{Inductive covering of $B_r$ and $A_r$}

In addition to the above properties of the defect, there are two substantive
theorems we will need from \cite{paper1}.  They allow us to inductively build up
the sets $A_r$ and $B_r$, or at least coverings of these.  The first provides
the base case:

\begin{thm}
\label{finite}
For every $\alpha$ with $0<\alpha<1$, the set of leaders $B_\alpha$ is a finite
set.
\end{thm}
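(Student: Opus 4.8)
The plan is to produce a bound $N(\alpha)$, depending only on $\alpha$, such that every leader $n$ with $\dft(n)<\alpha$ satisfies $n\le N(\alpha)$; as only finitely many integers lie below a fixed bound, this gives finiteness of $B_\alpha$. To find such a bound I would fix an optimal expression for $n$ and examine its top operation. Either that operation is an addition, or it is a multiplication $n=ab$ with $a,b>1$ and $\cpx n=\cpx a+\cpx b$. These two cases are controlled by quite different estimates, and the crux is to combine them into one size bound.

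First I would treat the additive top operation. Suppose $n=a+b$ is optimal with $a\ge b\ge 1$; expanding defects gives
\[ \dft(n)=\dft(a)+\cpx b+3\log_3\frac{a}{a+b}=\dft(a)+\cpx b-3\log_3\Bigl(1+\frac{b}{a}\Bigr). \]
Since $a\ge b$ we have $\frac12\le\frac{a}{a+b}<1$, so dropping $\dft(a)\ge0$ gives $\cpx b-3\log_3(1+b/a)<\alpha$; combined with $b\le a$ this forces $\cpx b\le 2$, hence $b\in\{1,2\}$, and then, with $b$ fixed, the same inequality gives $1+b/a>3^{(\cpx b-\alpha)/3}>1$, so $a$---and hence $n$---is at most an explicit function of $\alpha$. (This reflects the fact that $\dft(3^k+1)\to1$ from below, so only finitely many such numbers fall below $\alpha$.) Thus any leader whose optimal expression has an addition at the top is bounded by some $N_+(\alpha)$.

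The multiplicative case is where the real work lies. On an optimal product the defect is additive, $\dft(n)=\dft(a)+\dft(b)$. I would first show that the factors of a leader are themselves leaders: writing $a=3^{j}\tilde a$, $b=3^{l}\tilde b$ with $\tilde a,\tilde b$ the leaders furnished by Proposition~\ref{arbr}, the chain $\dft(n)\le\dft(\tilde a\tilde b)\le\dft(\tilde a)+\dft(\tilde b)=\dft(a)+\dft(b)=\dft(n)$ (the first step by Theorem~\ref{oldprops}(2)) shows $\tilde a\tilde b=n/3^{j+l}$ has defect $\dft(n)$, so minimality of $n$ forces $j=l=0$. Iterating, every leader $n$ factors as $n=a_1\cdots a_t$ into additively irreducible leaders $a_i>1$, each having an addition at the top, hence each $a_i\le N_+(\alpha)$, with $\sum_i\dft(a_i)=\dft(n)<\alpha$. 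The one remaining danger is a factor of defect $0$, i.e.\ a power of $3$; but the only leader that is a power of $3$ is $3$, and a factor of $3$ would give $\dft(n)=\dft(n/3)$, contradicting that $n$ is a leader. So every $a_i$ has $\dft(a_i)>0$, and since the $a_i$ come from the finite set of integers $\le N_+(\alpha)$, their defects admit a positive minimum $\varepsilon(\alpha)$. Then $t\,\varepsilon(\alpha)\le\dft(n)<\alpha$ bounds $t$, and $n\le N_+(\alpha)^{\alpha/\varepsilon(\alpha)}$.

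The step I expect to be the main obstacle is the multiplicative one---specifically, bounding the number $t$ of irreducible factors. Bounding each factor individually is the routine, estimate-driven part; the subtle point is upgrading ``each factor has positive defect'' to a uniform positive lower bound, which requires first knowing the factors are bounded (so that only finitely many defect values arise) and ruling out the defect-$0$ factor $3$ through leader-ness. Keeping this argument non-circular, rather than any individual computation, is the delicate part.
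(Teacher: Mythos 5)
You should first note that this paper does not actually prove Theorem~\ref{finite}: it is imported verbatim from \cite{paper1} (``there are two substantive theorems we will need from \cite{paper1}''), so there is no in-paper proof to compare against, and your argument must be judged on its own. Judged that way, it is correct, and importantly it is non-circular: you use only Theorem~\ref{oldprops} (parts (2), (3), (5), (7)) and Proposition~\ref{arbr}, both of which the paper quotes from \cite{paper1} independently of the finiteness theorem. Your additive estimate is right ($\cpx{b}<\alpha+3\log_3 2<3$ forces $b\in\{1,2\}$, and then $a<b/(3^{(\cpx{b}-\alpha)/3}-1)$, which is finite precisely because $\alpha<1$ --- note this reproduces exactly the threshold $3^{(1-\alpha)/3}-1$ appearing in the paper's definition of $T_\alpha$, a good sanity check). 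Your multiplicative lemma --- that in a good factorization of a leader both factors are leaders, via the chain $\dft(n)\le\dft(\tilde a\tilde b)\le\dft(\tilde a)+\dft(\tilde b)=\dft(n)$ and minimality --- is a clean argument, and the decomposition into m-irreducible leader factors, each necessarily carrying a most-efficient addition at the top and hence bounded by $N_+(\alpha)$, goes through.

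One small repair is needed in the final counting step. Your claim ``every $a_i$ has $\dft(a_i)>0$'' has a single exception: $n=3$ itself, which is a leader in $B_\alpha$ for every $\alpha>0$, is m-irreducible, and decomposes trivially as $t=1$, $a_1=3$ with $\dft(a_1)=0$; your contradiction (``a factor of $3$ gives $\dft(n)=\dft(n/3)$'') only works when $t\ge 2$, since removing the factor $3$ from a one-factor product leaves $1$, whose complexity is $1$, not $0$. The fix is trivial: apply the $\varepsilon(\alpha)$-counting only when $t\ge2$ (where the argument is sound), and observe that $t=1$ gives $n\le N_+(\alpha)$ directly with no defect lower bound needed. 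With that adjustment the bound $n\le\max\bigl(N_+(\alpha),\,N_+(\alpha)^{\alpha/\varepsilon(\alpha)}\bigr)$ stands and $B_\alpha$ is finite. Your structural decomposition is also recognizably the same mechanism (solid numbers, m-irreducible factors, good factorizations) that \cite{paper1} packages into Theorem~\ref{themethod}, so your proof is very much in the spirit of the source, just assembled into a single direct size bound rather than an inductive classification.
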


The other theorem provides the inductive step, telling us how to build up
$B_{(k+1)\alpha}$ from previous $B_{i\alpha}$.  In order to state it we'll first
need some definitions.

\begin{defns}
We say $n$ is \emph{most-efficiently} represented as $ab$ if $n=ab$ and
$\cpx{n}=\cpx{a}+\cpx{b}$, or as $a+b$ if $n=a+b$ and $\cpx{n}=\cpx{a}+\cpx{b}$.
In the former case we will also say that $n=ab$ is a \emph{good factorization}
of $n$.  We say $n$ is \emph{solid} if it cannot be written most-efficiently as
$a+b$ for any $a$ and $b$.  We say $n$ is \emph{m-irreducible} if it cannot be
written most-efficiently as $ab$ for any $a$ and $b$.  And for a real number
$\alpha\in(0,1)$, we define the set $T_\alpha$ to consist of $1$ together with
those $m$-irreducible numbers $n$ which satisfy
\[\frac{1}{n-1}>3^{\frac{1-\alpha}{3}}-1\]
and do not satisfy $\cpx{n}=\cpx{n-b}+\cpx{b}$ for any solid numbers $b$ with
$1<b\le n/2$.
\end{defns}

Note that for any $0<\alpha<1$, the set $T_\alpha$ is a finite set, due to the
upper bound on the size of numbers $n\in T_\alpha$.

Now we can state the theorem.  The theorem provides fives possibilities; three
``generic cases'' (1 through 3), and two ``exceptional cases'' (4 and 5).

\begin{thm}
\label{themethod}
Suppose that $0< \alpha <1$ and that $k\ge1$.
Then any $n\in B_{(k+1)\alpha}$ can be most-efficiently
represented in (at least) one of the following forms:
\begin{enumerate}
\item
For $k=1$,
there is either a good factorization $n=u\cdot v$ where
$u,v\in {B}_\alpha$, or a good factorization $n=u\cdot v\cdot w$ with
$u,v,w\in {B}_\alpha$; \\
For $k \ge 2$, there is a good factorization $n=u \cdot v$ where $u\in
B_{i\alpha}$,
$v\in B_{j\alpha}$ with $i+j=k+2$ and $2\le i, j\le k$.
\item $n=a+b$ with $\cpx{n}=\cpx{a}+\cpx{b}$, $a\in A_{k\alpha}$, $b\le a$ a
solid number and
\[\dft(a)+\cpx{b}<(k+1)\alpha+3\log_3 2.\]
\item There is a good factorization $n=(a+b)v$ with $v\in B_\alpha$, $a+b$ being
a most-efficient representation, and $a$ and $b$ satisfying the conditions in
the case (2) above.
\item $n\in T_\alpha$ (and thus in particular either $n=1$ or
$\cpx{n}=\cpx{n-1}+1$.)
\item There is a good factorization $n = u\cdot v$ with $u\in T_\alpha$ and
$v\in B_\alpha$.
\end{enumerate}
\end{thm}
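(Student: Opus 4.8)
The plan is to dissect a most-efficient representation of $n$, branching on whether $n$ is $m$-irreducible, after first isolating two defect-accounting facts that govern the two operations. On the multiplicative side, a good factorization $n=ab$ gives $\dft(n)=\dft(a)+\dft(b)$ at once, since $\cpx n=\cpx a+\cpx b$ and $\log_3 n=\log_3 a+\log_3 b$. On the additive side, if $n=a+b$ is most-efficient with $1<b\le a$, then
\[ \dft(a)+\cpx b=\dft(n)+3\log_3\!\bigl(1+b/a\bigr)\le\dft(n)+3\log_3 2. \]
Because $b>1$ forces $\cpx b\ge 2>3\log_3 2$, splitting off any summand strictly lowers the defect, and $\dft(a)$ drops below $\dft(n)$ by an amount that grows as $b/a$ shrinks. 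I would also invoke Proposition~\ref{arbr} to replace every factor by its leader, absorbing powers of $3$ into exponents without changing defects, so that all factors appearing below may be assumed to be leaders.

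Suppose first that $n$ is $m$-irreducible, so every most-efficient representation of $n$ is a sum. If $n\in T_\alpha$ we are in case (4). Otherwise one of the defining conditions of $T_\alpha$ fails. If the size condition fails, $n$ is large; refining a most-efficient sum until its smallest summand $b$ is solid and at most $n/2$, the additive identity gives $\dft(a)+\cpx b<(k+1)\alpha+3\log_3 2$ --- the inequality required in case (2) --- while largeness of $n$ keeps $b/a$ small enough that $3\log_3(1+b/a)$ is tiny, forcing $\dft(a)<\dft(n)-1<k\alpha$ (here $\alpha<1$ converts $\dft(n)<(k+1)\alpha$ into $\dft(n)-1<k\alpha$), so $a\in A_{k\alpha}$. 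If instead the solidity condition fails, we are handed such a splitting $n=(n-b)+b$ directly. Either way we land in case (2).

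Now suppose $n$ is not $m$-irreducible and fix a good factorization $n=uv$ with $u,v>1$ leaders, so $\dft(u)+\dft(v)=\dft(n)<(k+1)\alpha$. If both factors have defect at least $\alpha$, I would set $i=\floor{\dft(u)/\alpha}+1$ and $j=\floor{\dft(v)/\alpha}+1$; then $u\in B_{i\alpha}$, $v\in B_{j\alpha}$, and $2\le i,j$ with $i+j\le k+2$, which (after harmlessly enlarging one index so that $i+j=k+2$, using that $\dft(u),\dft(v)<k\alpha$ keeps $i,j\le k$) is exactly case (1). Otherwise one factor, say $v$, lies in $B_\alpha$; I would then examine the cofactor $u$, again a leader of defect below $(k+1)\alpha$. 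If $u$ is itself reducible, factoring it further produces a product of leaders that regroups into case (1) (for $k=1$ this is the two- or three-factor dichotomy). If $u$ is $m$-irreducible, the analysis of the previous paragraph applies to $u$: either $u\in T_\alpha$, giving $n=u\cdot v$ as in case (5), or $u=a+b$ with $a,b$ as in case (2), giving $n=(a+b)v$ as in case (3).

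The step I expect to be the main obstacle is the exact defect bookkeeping at the two boundaries. In the $m$-irreducible branch, one must tie the threshold $\frac{1}{n-1}>3^{(1-\alpha)/3}-1$ precisely to the strict inequality $\dft(a)<k\alpha$: the point is that $n$ exceeding the threshold should be equivalent to $b/a$ being small enough that the gain $\cpx b-3\log_3(1+b/a)$ exceeds $\dft(n)-k\alpha$, and getting this clean equivalence rather than a lossy estimate requires care. In the multiplicative branch, the delicate part is guaranteeing that peeling a single $B_\alpha$-factor suffices --- that the core is reached without accumulating several $B_\alpha$-factors, which would break the single-factor shape demanded in cases (3) and (5) --- and that the indices land in the prescribed window $2\le i,j\le k$ with $i+j=k+2$ rather than overshooting to $j=1$. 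Combining solidity, the bound $b\le n/2$, and the leader hypothesis on $n$ (which pins down the distribution of powers of $3$) is what makes these fit; everything else should follow routinely from the two accounting identities.
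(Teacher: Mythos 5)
Your proposal targets a theorem the present paper does not prove at all: Theorem~\ref{themethod} is imported verbatim from \cite{paper1}, so the comparison must be against that source, whose proof your outline resembles in architecture (m-irreducible versus reducible dichotomy, refinement to a solid summand, leader factors, index bookkeeping) but not in the two places where the work actually happens. The first genuine gap is in your additive accounting. Your identity $\dft(a)+\cpx{b}=\dft(n)+3\log_3(1+b/a)$ is correct and does give the displayed inequality of case (2) for free; the whole content of that case is $a\in A_{k\alpha}$, and your mechanism for it fails. Largeness of $n$ does \emph{not} make $b/a$ small: the refinement stops at the smallest solid summand, which can be comparable to $n/2$ no matter how large $n$ is, so with only $\cpx{b}\ge 2$ the guaranteed defect drop $\cpx{b}-3\log_3(1+b/a)$ is merely $2-3\log_3 2\approx 0.107$, far short of the drop $\alpha$ needed to pass from $\dft(n)<(k+1)\alpha$ to $\dft(a)<k\alpha$. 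Worse, your claimed conclusion $\dft(a)<\dft(n)-1$ is impossible when $b=1$, since then the drop is $1-3\log_3\frac{n}{n-1}<1$; and $b=1$ \emph{is} solid, so your refinement may legitimately output it. The repair is the two-subcase analysis that the definition of $T_\alpha$ is calibrated for: for $b=1$, failure of the size condition $\frac{1}{n-1}>3^{(1-\alpha)/3}-1$ is exactly the statement that the drop is at least $\alpha$ (there is no such clean equivalence for general $b$, contrary to what you anticipate); for solid $b>1$ one needs the observation, absent from your sketch, that $2,3,4,5$ each split most-efficiently as $(b-1)+1$, so a solid $b>1$ satisfies $b\ge 6$, hence $\cpx{b}\ge 5$ and the drop exceeds $5-3\log_3 2>1>\alpha$ regardless of $b/a$. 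The same fact is silently needed in your ``solidity condition fails'' subcase, where you assert case (2) without verifying $a\in A_{k\alpha}$ at all.

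The second gap is the multiplicative regrouping, which you correctly flag as the main obstacle and then do not carry out. Two specific holes: (i) $B_{i\alpha}$ consists of \emph{leaders}, and Proposition~\ref{arbr} alone does not let you assume the factors are leaders; you need that in a good factorization of the leader $n$, writing $u=3^{i}u'$, $v=3^{j}v'$ with $u',v'$ leaders would make $u'v'$ a smaller number with the same defect as $n$, contradicting leadership unless $i=j=0$ --- an argument using the hypothesis that $n$ is a leader, not just Proposition~\ref{arbr}. (ii) When the cofactor $u$ factors further, you claim it ``regroups into case (1),'' but you never show the regrouped blocks are leaders with defects below $k\alpha$, nor that in the alternative branch the non-core factors compress into a \emph{single} element of $B_\alpha$. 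The actual mechanism is: refine into m-irreducible leader factors; if the maximal factor defect is $\ge k\alpha$, then since the total is $<(k+1)\alpha$ the remaining defects sum to $<\alpha$, so their product is a single $B_\alpha$ leader (leaderhood again via the fact in (i)) and the m-irreducible core feeds the additive analysis, yielding cases (3)--(5); otherwise a two-block grouping with both block defects $<k\alpha$ exists (greedily, using that in the critical subcase every factor has defect $<\alpha$) and lands in the window $2\le i,j\le k$, $i+j=k+2$. For $k=1$ the two-or-three-factor alternative, which you assert parenthetically, needs its own packing argument (defects each $<\alpha$ with sum $<2\alpha$ fit into at most three blocks each of defect $<\alpha$). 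Finally, no induction is ever set up, though your recursion on $u$ tacitly requires one. These omissions are precisely the substance of the proof in \cite{paper1}; as written, your text is a plausible plan rather than a proof.
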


By applying these two theorems, we can inductively build up the sets $B_r$ and
$A_r$; in a sense they form the engine of our proof.  However, without
additional tools, it can be hard to say anything about just what these theorems
output.  In Section~\ref{secpolys}, we will show how to group the output of
these theorems into tractable families, allowing us to go beyond the earlier
work of this author and Zelinsky \cite{paper1} and prove the main theorem.

\section{Stable defects and stable complexity}
\label{secstab}

It will also be useful here to introduce the notion of ``stable defect'' and
``stable complexity''.  First, let us discuss the defects of stable numbers.

\begin{prop}
If $\dft(n)=\dft(m)$ and $n$ is stable, then so is $m$.
\end{prop}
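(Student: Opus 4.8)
The plan is to reduce stability of $m$ to membership in a defect level set and then apply the classification in Theorem~\ref{oldprops}(5). Write $\alpha := \dft(n) = \dft(m)$ and consider the full level set $S := \{\ell \in \N : \dft(\ell) = \alpha\}$. First I would use stability of $n$ to show that $S$ is infinite: by definition $n$ stable means $\dft(3^k n) = \dft(n) = \alpha$ for every $k \ge 0$, and since the integers $3^k n$ are distinct, the set $\{3^k n : k \ge 0\}$ is an infinite subset of $S$.

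Next I would invoke Theorem~\ref{oldprops}(5), according to which $S$ has either the finite form $\{n_0 3^k : 0 \le k \le L\}$ or the infinite form $\{n_0 3^k : 0 \le k\}$ for some $n_0$. Since $S$ is infinite, it must take the infinite form $S = \{n_0 3^k : k \ge 0\}$.

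The conclusion is then immediate. Since $\dft(m) = \alpha$ we have $m \in S$, so $m = n_0 3^j$ for some $j \ge 0$; hence for every $k \ge 0$ the number $3^k m = n_0 3^{k+j}$ again lies in $S$ (because $k + j \ge 0$), and therefore $\dft(3^k m) = \alpha = \dft(m)$. This is exactly the assertion that $m$ is stable.

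There is no serious obstacle here: the only point requiring care is confirming that the infinite rather than the finite branch of Theorem~\ref{oldprops}(5) applies, which is precisely what the infinitude of $\{3^k n : k \ge 0\}$ guarantees. One could instead try to argue directly from parts (2) and (4)---part (4) yields an $L$ with $\dft(3^k n) = \dft(3^L n)$ for $k \ge L$, and stability forces this common value to equal $\alpha$---but one would still have to transport the conclusion across the chain joining $n$ and $m$, which is exactly the structural information that part (5) already packages. Hence the level-set argument above is the cleanest route.
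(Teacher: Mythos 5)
Your proof is correct, but it follows a genuinely different route from the paper's. The paper argues from parts (2) and (3) of Theorem~\ref{oldprops}: since $\dft(n)-\dft(m)=0$ is rational, part (3) yields $m=3^k n$ for some $k\in\Z$, and a short case analysis on the sign of $k$ finishes the job --- for $k\ge 0$ stability transfers to $m$ immediately, while for $k<0$ one uses stability of $n$ when $\ell\ge -k$ and the sandwich $\dft(n)\le \dft(3^\ell m)\le \dft(m)$ from the monotonicity in part (2) when $\ell\le -k$. You instead invoke the level-set classification of part (5): stability of $n$ makes the level set $S=\{\ell\in\N:\dft(\ell)=\alpha\}$ infinite, which rules out the finite branch $\{n_0 3^k: 0\le k\le L\}$ and forces $S=\{n_0 3^k : k\ge 0\}$, a set visibly closed under multiplication by $3$; hence every member of $S$, in particular $m$, is stable. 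Your version is slightly slicker --- it avoids the case analysis and makes transparent that \emph{all} elements of an infinite level set inherit stability simultaneously --- at the cost of leaning on part (5), the strongest packaged structural fact in Theorem~\ref{oldprops} (Theorem~5 of the cited earlier paper), whereas the paper's argument gets by with the more elementary ingredients (2) and (3). The two steps in your argument that need care --- the infinitude of $\{3^k n : k\ge 0\}$ and the reduction of stability of $m$ to $\dft(3^k m)=\dft(m)$ for all $k\ge 0$ --- are both handled correctly, so there is no gap.
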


\begin{proof}
Suppose $\dft(n)=\dft(m)$ and $n$ is stable.  Then we can write $m=3^k n$ for
some $k\in \mathbb{Z}$.  Now, a number $a$ is stable if and only if $\dft(3^\ell
a)=\dft(a)$ for all $\ell\ge 0$; so if $k\ge 0$, then $m$ is stable.  If, on the
other hand, $k<0$, then consider $\ell\ge 0$.  If $\ell\ge -k$, then
$\dft(3^\ell m)=\dft(3^{\ell+k} n)=\dft(n)$, while if $\ell\le -k$, then
$\dft(n)\le \dft(3^\ell m)\le \dft(m)$, so $\dft(3^\ell m)=\dft(m)$; hence $m$
is stable.
\end{proof}

Because of this proposition, it makes sense to make the following definition:

\begin{defn}
We define a \emph{stable defect} to be the defect of a stable number, and define
$\mathscr{D}_\st$ to be the set of all stable defects.  Also, for $a$ a
congruence class modulo $3$, we define $\mathscr{D}^a_\st=\mathscr{D}^a \cap
\mathscr{D}_\st$.
\end{defn}

Note that the integer $1$ is not stable, and so its defect, which is also $1$,
would be excluded from $\mathscr{D}^1_\st$ even if we had not explicitly
excluded it in the definition of $\mathscr{D}^1$.

This double use of the word ``stable'' could potentially be ambiguous if we had
a positive integer $n$ which were also a defect.  However, the only positive
integer which is also a defect is $1$, which is not stable in either sense.

\begin{prop}
\label{modz1}
A defect $\alpha$ is stable if and only if it is the smallest
$\beta\in\mathscr{D}$ such that $\beta\equiv\alpha\pmod{1}$.
\end{prop}

\begin{proof}
This follows from parts (2), (3), and (5) of Theorem~\ref{oldprops}.
\end{proof}

\begin{defn}
For a positive integer $n$, define the \emph{stable defect of $n$}, denoted
$\dft_\st(n)$, to be $\dft(3^k n)$ for any $k$ such that $3^k n$ is stable.
(This is well-defined as if $3^k n$ and $3^\ell n$ are stable, then $k\ge \ell$
implies $\dft(3^k n)=\dft(3^\ell n)$, and so does $\ell\ge k$.)
\end{defn}

Here are two equivalent characterizations:

\begin{prop}
\label{staltchar}
The number $\dft_\st(n)$ can be characterized by:
\begin{enumerate}
\item $\dft_\st(n)= \min_{k\ge 0} \dft(3^k n)$
\item $\dft_\st(n)$ is the smallest $\alpha\in\mathscr{D}$ such that
$\alpha\equiv \dft(n) \pmod{1}$.
\end{enumerate}
\end{prop}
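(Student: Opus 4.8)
The plan is to derive both characterizations directly from the structural facts already recorded in Theorem~\ref{oldprops} together with Proposition~\ref{modz1}; no new machinery is needed, since the definition of $\dft_\st(n)$ was engineered precisely so that these follow. The two facts doing the work are that $k\mapsto\dft(3^k n)$ is nonincreasing and eventually constant (parts (2) and (4)), and that consecutive values differ by integers, so all the $\dft(3^k n)$ lie in a single residue class modulo $1$ (again part (2)).

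For part (1), I would first observe that the sequence $\dft(3^k n)$ is nonincreasing in $k\ge 0$: applying Theorem~\ref{oldprops}(2) with base $3^k n$ and exponent $1$ gives $\dft(3^{k+1}n)\le\dft(3^k n)$. Next, by Theorem~\ref{oldprops}(4) there is an $L$ with $\dft(3^k n)=\dft(3^L n)$ for all $k\ge L$; this same equality shows $\dft(3^\ell(3^L n))=\dft(3^{L+\ell}n)=\dft(3^L n)$ for all $\ell\ge 0$, so $3^L n$ is stable and hence $\dft_\st(n)=\dft(3^L n)$ by definition. A nonincreasing sequence that is eventually constant attains its minimum, namely that eventual constant value, so $\min_{k\ge 0}\dft(3^k n)=\dft(3^L n)=\dft_\st(n)$, which is exactly the claim.

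For part (2), I would start from the fact that $\dft_\st(n)=\dft(3^L n)$ is, by construction, the defect of the stable number $3^L n$, hence a stable defect. By Theorem~\ref{oldprops}(2) each difference $\dft(n)-\dft(3^k n)$ is a nonnegative integer, so in particular $\dft_\st(n)\equiv\dft(n)\pmod 1$; thus the sets $\{\beta\in\mathscr{D}:\beta\equiv\dft(n)\pmod 1\}$ and $\{\beta\in\mathscr{D}:\beta\equiv\dft_\st(n)\pmod 1\}$ coincide. Now Proposition~\ref{modz1} states exactly that a stable defect is the least element of $\mathscr{D}$ in its residue class modulo $1$; applying it to the stable defect $\dft_\st(n)$ identifies it as the smallest $\alpha\in\mathscr{D}$ with $\alpha\equiv\dft(n)\pmod 1$.

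There is no serious obstacle here; the content is entirely bookkeeping around the cited results. The one point requiring care is the existence of the minimum in part (2): a priori the residue class could fail to have a least element, but this is precisely what Proposition~\ref{modz1} (resting on parts (3) and (5) of Theorem~\ref{oldprops}) guarantees, and it is consistent with the attained minimum found in part (1). A final check I would make is that the two characterizations are mutually consistent and independent of the choice of stabilizing exponent $L$, which is already ensured by the well-definedness of $\dft_\st(n)$ noted in its definition.
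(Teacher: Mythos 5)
Your proof is correct and follows essentially the same route as the paper's: part (1) from the monotonicity in Theorem~\ref{oldprops}(2) together with the characterization of stability (with eventual constancy from part (4) supplying the stable multiple $3^L n$), and part (2) by noting $\dft_\st(n)\equiv\dft(n)\pmod{1}$ and applying Proposition~\ref{modz1} to the stable defect. You merely spell out bookkeeping that the paper leaves implicit; there is no gap.
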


\begin{proof}
Part (1) follows from part (2) Theorem~\ref{oldprops} and the fact that $m$ is
stable if and only if $\dft(3^k m)=\dft(m)$ for all $k\ge 0$.  To prove part
(2), take $k$ such that $3^k n$ is stable.  Then $\dft(3^k n)\equiv \dft(n)
\pmod{1}$, and it is the smallest such by Proposition~\ref{modz1}.
\end{proof}

So we can think about $\mathscr{D}_\st$ either as the subset of $\mathscr{D}$
consisting of the stable defects, or we can think about it as the image of
$\dft_\st$.  (This latter way of thinking doesn't work so well for the
$\mathscr{D}_\st^a$, however.)

Just as we can talk about the stable defect of a number $n$, we can also talk
about its \emph{stable complexity} -- what the complexity would be ``if $n$ were
stable''.

\begin{defn}
For a positive integer $n$, we define the \emph{stable complexity of $n$},
denoted $\cpx{n}_\st$, to be $\cpx{3^k n}-3k$ for any $k$ such that $3^k n$ is
stable.  This is well-defined; if $3^k n$ and $3^\ell n$ are both stable, say
with $k\le \ell$, then
\[\cpx{3^k n}-3k=3(k-\ell)+\cpx{3^\ell n}-3k=\cpx{3^\ell n}-3\ell.\]
\end{defn}

\begin{prop}
\label{stabcpxprops}
We have:
\begin{enumerate}
\item $\cpx{n}_\st = \min_{k\ge 0} (\cpx{3^k n}-3k)$
\item $\dft_\st(n)=\cpx{n}_\st-3\log_3 n$
\end{enumerate}
\end{prop}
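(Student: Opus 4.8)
The plan is to prove both parts by unwinding the definition of $\cpx{n}_\st$ and using the identity
\[ \dft(3^k n) = \bigl(\cpx{3^k n} - 3k\bigr) - 3\log_3 n, \]
which holds for every $k\ge 0$ because $\log_3(3^k n) = k + \log_3 n$.

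For part (2), I would fix any $k$ for which $3^k n$ is stable; such a $k$ exists by Theorem~\ref{oldprops}(4). By definition, $\cpx{n}_\st = \cpx{3^k n} - 3k$ and $\dft_\st(n) = \dft(3^k n)$. Substituting into the displayed identity gives
\[ \dft_\st(n) = \dft(3^k n) = \bigl(\cpx{3^k n} - 3k\bigr) - 3\log_3 n = \cpx{n}_\st - 3\log_3 n, \]
which is exactly the claim. This part is a pure computation with no real obstacle.

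For part (1), I would show that the function $k \mapsto \cpx{3^k n} - 3k$ is non-increasing and eventually constant, so its minimum over $k\ge 0$ is its eventual value, which is precisely $\cpx{n}_\st$. Monotonicity follows from $\cpx{3^{k+1} n} \le \cpx{3} + \cpx{3^k n} = 3 + \cpx{3^k n}$ (adjoining one factor of $3$ costs at most $\cpx{3}=3$), giving $\cpx{3^{k+1} n} - 3(k+1) \le \cpx{3^k n} - 3k$; equivalently, it follows from the fact that $\dft(3^k n)$ is non-increasing in $k$ by Theorem~\ref{oldprops}(2), since the two functions differ by the constant $3\log_3 n$. That the function is eventually constant, with value $\cpx{3^L n} - 3L = \cpx{n}_\st$ for any $L$ with $3^L n$ stable, is exactly Theorem~\ref{oldprops}(4) together with the definition of $\cpx{n}_\st$. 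Hence $\min_{k\ge 0}\bigl(\cpx{3^k n} - 3k\bigr) = \cpx{n}_\st$.

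Alternatively, part (1) can be read off directly from Proposition~\ref{staltchar}(1): taking the minimum over $k$ of the displayed identity yields $\min_{k\ge 0}\bigl(\cpx{3^k n} - 3k\bigr) = \dft_\st(n) + 3\log_3 n$, which equals $\cpx{n}_\st$ by part (2). The only point requiring any care is that the infimum defining $\cpx{n}_\st$ is genuinely attained — that is, that the stabilization in Theorem~\ref{oldprops}(4) makes it a minimum achieved at every stable exponent — but this is immediate, so I do not expect any substantive difficulty in this proposition.
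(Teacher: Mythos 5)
Your proposal is correct and matches the paper's own proof in both parts: part (2) is the same computation at a stable exponent $k$, and part (1) uses the same monotonicity observation $\cpx{3m}\le 3+\cpx{m}$ together with stabilization (Theorem~\ref{oldprops}(4)) to identify the minimum with the eventual value. Your alternative derivation of part (1) from Proposition~\ref{staltchar}(1) is also legitimate, since that proposition precedes this one and your part (2) is proved independently of it.
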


\begin{proof}
To prove part (1), observe that $\cpx{3^k n}-3k$ is nonincreasing in $k$, since
$\cpx{3m}\le3+\cpx{m}$.  So a minimum is achieved if and only if for all $\ell$,
\[\cpx{3^{k+\ell} n}-3(k+\ell)=\cpx{3^k n}-3k,\] i.e., for all $\ell$,
$\cpx{3^{k+\ell} n}=\cpx{3^k n}+3\ell$, i.e., $3^k n$ is stable.

To prove part (2), take $k$ such that $3^k n$ is stable.  Then
\[\dft_\st(n)=\dft(3^k n)=\cpx{3^k n}-3\log_3(3^k n)=\cpx{3^k n}-3k-3\log_3 n
=\cpx{n}_\st-3\log_3 n.\]
\end{proof}

\begin{prop}
\label{stabisstab}
We have:
\begin{enumerate}
\item $\dft_\st(n) \le \dft(n)$, with equality if and only if $n$ is stable.
\item $\cpx{n}_\st \le \cpx{n}$, with equality if and only if $n$ is stable.
\end{enumerate}
\end{prop}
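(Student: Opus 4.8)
The plan is to read both statements directly off the minimum-characterizations of $\dft_\st(n)$ and $\cpx{n}_\st$ established just above in Propositions~\ref{staltchar} and~\ref{stabcpxprops}: each inequality will come out by evaluating the relevant minimum at $k=0$, and the equality cases will be pinned down by sandwiching against the reverse one-sided inequalities that are already on record.

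For part (1), Proposition~\ref{staltchar}(1) gives $\dft_\st(n)=\min_{k\ge 0}\dft(3^k n)$, so isolating the $k=0$ term yields $\dft_\st(n)\le\dft(n)$ immediately. For the equality condition I would invoke Theorem~\ref{oldprops}(2), which supplies the opposite one-sided bound $\dft(3^k n)\le\dft(n)$ for every $k\ge 0$. Thus $\dft_\st(n)=\dft(n)$ holds exactly when $\dft(n)$ itself realizes the minimum, i.e.\ when $\dft(3^k n)\ge\dft(n)$ for all $k$; combined with the reverse bound this forces $\dft(3^k n)=\dft(n)$ for all $k\ge 0$, which is precisely the definition of $n$ being stable.

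For part (2) the argument is structurally identical using Proposition~\ref{stabcpxprops}(1): the $k=0$ term of $\cpx{n}_\st=\min_{k\ge 0}(\cpx{3^k n}-3k)$ is just $\cpx{n}$, so $\cpx{n}_\st\le\cpx{n}$. The role played previously by Theorem~\ref{oldprops}(2) is now played by the subadditivity $\cpx{3m}\le 3+\cpx{m}$, which iterates to $\cpx{3^k n}\le 3k+\cpx{n}$; equality in the proposition then forces $\cpx{3^k n}=3k+\cpx{n}$ for all $k\ge 0$, again exactly stability. Alternatively, since the identities $\dft(n)=\cpx{n}-3\log_3 n$ and Proposition~\ref{stabcpxprops}(2) give $\dft_\st(n)-\dft(n)=\cpx{n}_\st-\cpx{n}$, part (2) could instead simply be deduced from part (1), inequality and equality case together.

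There is essentially no serious obstacle here, since the real content was already extracted into the minimum-characterizations. The one point to be careful about is the equality direction: one must cite the correct reverse inequality in each case---monotonicity of the defect for part (1), subadditivity of complexity for part (2)---so that equality in the minimum genuinely propagates to equality in \emph{every} term, and is therefore equivalent to stability rather than merely to some single term attaining the value at $k=0$.
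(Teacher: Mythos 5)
Your proposal is correct and follows essentially the same route as the paper: part (1) is argued identically, reading the inequality off Proposition~\ref{staltchar}(1) and sandwiching with Theorem~\ref{oldprops}(2) to get stability from equality. For part (2) the paper uses exactly your stated alternative---deducing it from part (1) via Proposition~\ref{stabcpxprops}(2)---but your primary direct argument through the minimum characterization and the subadditivity $\cpx{3m}\le 3+\cpx{m}$ is an equally valid, trivially equivalent variant.
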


\begin{proof}
The inequality in part (1) follows from Proposition~\ref{staltchar}.  Also, if
$n$ is stable, then for any $k\ge 1$, we have $\dft(3^k n)=\dft(n)$, so
$\dft_\st(n)=\dft(n)$.  Conversely, if $\dft_\st(n)=\dft(n)$, then by
Proposition~\ref{staltchar}, for any $k\ge 1$, we have $\dft(3^k n)\ge \dft(n)$.
But also $\dft(3^k n)\le \dft(n)$ by part (2) of Theorem~\ref{oldprops}, and so
$\dft(3^k n)=\dft(n)$ and $n$ is stable.

Part (2) follows from part (1) along with part (2) of
Proposition~\ref{stabcpxprops}.
\end{proof}

We will write more about the properties of $\cpx{n}_\st$ in a sequel paper
\cite{seq2}.

\section{Low-defect polynomials}
\label{secpolys}

The primary tool we will use to prove the main theorem is to group the numbers
produced by the main theorem of \cite{paper1} into families.  Each of these
families will be expressed via a multilinear polynomial in $\Z[x_1,x_2,\ldots]$,
which we will call a \emph{low-defect polynomial}.  We will associate these with
a ``base complexity'' to form a \emph{low-defect pair}.  Formally:

\begin{defn}
We define the set $\mathscr{P}$ of \emph{low-defect pairs} as the smallest
subset of $\Z[x_1,x_2,\ldots]\times \N$ such that:
\begin{enumerate}
\item For any constant polynomial $k\in \N\subseteq\Z[x_1, x_2, \ldots]$ and any
$C\ge \cpx{k}$, we have $(k,C)\in \mathscr{P}$.
\item Given $(f_1,C_1)$ and $(f_2,C_2)$ in $\mathscr{P}$, we have $(f_1\otimes
f_2,C_1+C_2)\in\mathscr{P}$, where, if $f_1$ is in $r_1$ variables and $f_2$ is
in $r_2$ variables,
\[ (f_1\otimes f_2)(x_1,\ldots,x_{r_1+r_2}) :=
	f_1(x_1,\ldots,x_{r_1})f_2(x_{r_1+1},\ldots,x_{r_1+r_2}). \]
\item Given $(f,C)\in\mathscr{P}$, $c\in \N$, and $D\ge \cpx{c}$, we have
$(f\otimes x_1 + c,C+D)\in\mathscr{P}$ where $\otimes$ is as above.
\end{enumerate}

The polynomials obtained this way will be referred to as \emph{low-defect
polynomials}.  If $(f,C)$ is a low-defect pair, $C$ will be called its
\emph{base complexity}.  If $f$ is a low-defect polynomial, we will define its
\emph{absolute base complexity}, denoted $\cpx{f}$, to be the smallest $C$ such
that $(f,C)$ is a low-defect pair.
\end{defn}

Note that the degree of a low-defect polynomial is also equal to the number of
variables it uses; see Proposition~\ref{polystruct}.  We will often refer to the
``degree'' of a low-defect pair $(f,C)$; this refers to the degree of $f$.

Note that we do not really care about what variables a low-defect polynomial (or
pair) is in -- if we permute the variables of a low-defect polynomial or replace
them with others, we will still regard the result as a low-defect polynomial.
From this perspective, the meaning of $f\otimes g$ could be simply regarded as
``relabel the variables of $f$ and $g$ so that they do not share any, then
multiply $f$ and $g$''.  Helpfully, the $\otimes$ operator is associative not
only with this more abstract way of thinking about it, but also in the concrete
way it was defined above.

\subsection{Properties of low-defect polynomials}

Let us begin by stating some structural properties of low-defect polynomials.

\begin{prop}
\label{polystruct}
Suppose $f$ is a low-defect polynomial of degree $r$.  Then $f$ is a
polynomial in the variables $x_1,\ldots,x_r$, and it is a multilinear
polynomial, i.e., it has degree $1$ in each of its variables.  The coefficients
are non-negative integers.  The constant term is nonzero, and so is the
coefficient of $x_1\ldots x_r$, which we will call the \emph{leading
coefficient} of $f$.
\end{prop}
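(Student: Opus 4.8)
The plan is to argue by structural induction on the recursive definition of the set $\mathscr{P}$ of low-defect pairs. Since every low-defect polynomial is by definition the first coordinate of some low-defect pair, and since all the asserted properties (multilinearity, non-negativity of the coefficients, and nonvanishing of the constant and leading coefficients) depend only on the polynomial and not on its base complexity, it suffices to prove the following statement for every $(f,C)\in\mathscr{P}$: if $f$ is built using $r$ variables, then $f$ is a multilinear polynomial in $x_1,\ldots,x_r$ with non-negative integer coefficients whose constant term and whose coefficient of $x_1\cdots x_r$ are both nonzero. Granting this, multilinearity together with a nonzero coefficient on $x_1\cdots x_r$ forces $\deg f=r$ and forces every variable to actually appear; hence the number of variables used coincides with the degree, matching the hypothesis ``$f$ of degree $r$'' in the statement, and every claimed property follows.

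For the base case (clause (1) of the definition), $f=k$ is a constant with $k\in\N$; here I would use that $\cpx{k}$ is only defined for $k\ge 1$, so $k\ge 1$. Such an $f$ uses $r=0$ variables, is vacuously multilinear, has non-negative integer coefficients, and its constant term---which is simultaneously the coefficient of the empty product $x_1\cdots x_r$, i.e.\ its leading coefficient---equals $k\ge 1$ and is therefore nonzero. For clause (2), I would take $f=f_1\otimes f_2$ with $(f_1,C_1),(f_2,C_2)\in\mathscr{P}$ satisfying the inductive hypothesis in $r_1$ and $r_2$ variables respectively. By the concrete definition of $\otimes$, the two factors are expressed in the disjoint variable sets $x_1,\ldots,x_{r_1}$ and $x_{r_1+1},\ldots,x_{r_1+r_2}$, so $f$ is a product of polynomials in disjoint variables: this immediately gives multilinearity in $x_1,\ldots,x_{r_1+r_2}$ and preserves non-negative integer coefficients. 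The constant term of a product is the product of the constant terms, and the coefficient of $x_1\cdots x_{r_1+r_2}$ is the product of the two leading coefficients; by the inductive hypothesis each factor is nonzero, so both survive.

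For clause (3), I would write $f=f_0\otimes x_1 + c = f_0\cdot x_{r_0+1}+c$, where $(f_0,C)$ satisfies the inductive hypothesis in $r_0$ variables and $c\in\N$ (again $c\ge 1$ since $\cpx{c}$ must be defined). Multiplying $f_0$ by the fresh variable $x_{r_0+1}$ keeps the polynomial multilinear in $x_1,\ldots,x_{r_0+1}$ with non-negative integer coefficients, and adding the constant $c$ changes only the constant term. The product $f_0\cdot x_{r_0+1}$ has zero constant term, so the new constant term is $c\ge 1\ne 0$; meanwhile the coefficient of $x_1\cdots x_{r_0+1}$ equals the leading coefficient of $f_0$, which is unaffected by the added constant and hence nonzero by the inductive hypothesis. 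This closes the induction.

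I do not anticipate a serious obstacle; this is a routine structural induction. The two points requiring care are bookkeeping rather than difficulty. First, one must invoke the convention that the constants $k$ and $c$ appearing in clauses (1) and (3) are positive---forced by $\cpx{\cdot}$ being defined only on positive integers---since it is precisely their positivity that makes the constant term nonvanishing at each step. Second, one must track the variable relabelling built into $\otimes$ carefully enough to confirm that the product in clause (2) and the multiplication by a fresh variable in clause (3) genuinely introduce disjoint or new variables, so that multilinearity holds on the nose and the leading coefficient factors as the product of the subordinate leading coefficients.
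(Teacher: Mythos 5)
Your proposal is correct and follows essentially the same route as the paper: a structural induction over the three clauses defining $\mathscr{P}$, with multilinearity coming from the disjointness of variables in $\otimes$, the leading coefficient factoring as a product (or being inherited in clause (3)), and the constant term being a product of constant terms or the added positive constant $c$. Your two points of care---positivity of the constants forced by $\cpx{\cdot}$ being defined only on positive integers, and the identification of degree with number of variables---are handled implicitly in the paper but made explicit by you, which is a harmless refinement rather than a different argument.
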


\begin{proof}
We prove the statement by structural induction.

If the low-defect polynomial $f$ is just a constant $n$, it has no variables and
the leading coefficient and constant term are both $n$, which is positive.

If $f=g\otimes h$, say
$f(x_1,\ldots,x_r)=g(x_1,\ldots,x_s)h(x_{s+1},\ldots,x_r)$, then by the
inductive hypothesis $f$ is a product of two polynomials whose coefficients are
nonnegative integers, and thus so is $f$.  To see that $f$ is multilinear,
consider a variable $x_i$; if $1\le i\le s$, then $x_i$ has degree $1$ in
$g(x_1,\ldots,x_s)$ and degree $0$ in $h(x_{s+1},\ldots,x_r)$, while if $r+1 \le
i \le s$, the reverse is true.  Either way, $x_i$ has degree $1$ in $f$.

The coefficient of $x_1\ldots x_r$ in $f$ is the product of the coefficient of
$x_1\ldots x_s$ in $g$ and the coefficient of $x_1\ldots x_{r-s}$ in $h$ and so
does not vanish, and the constant term of $f$ is the product of the constant
terms of $g$ and $h$ and so does not vanish.

Finally, if $f=g\otimes x_1 + c$, say
$f(x_1,\ldots,x_r)=g(x_1,\ldots,x_{r-1})x_r+c$, then since $g$ has coefficients
that are nonnegative integers, so does $f$.  To see that $f$ is multilinear,
consider a variable $x_i$; for $1\le i \le r-1$, the variable $x_i$ has degree
$1$ in $g$ and hence so does in $f$, while $x_r$ has degree $0$ in $g$ and hence
has degree $1$ in $f$ as well.  Finally, the coefficient of $x_1\ldots x_r$ in
$f$ is the same as the coefficient of $x_1\ldots x_{r-1}$ in $g$ and hence does
not vanish, while the constant term of $f$ is $c$, which is positive.
\end{proof}

We will also need the following lemma in Section~\ref{secwo}:

\begin{lem}
\label{maxvar}
For any low-defect polynomial $f$ of degree $k>0$, there exist low-defect
polynomials $g$ and $h$ and a positive integer $c$ such that
$f=h\otimes(g\otimes x_1+c)$.
\end{lem}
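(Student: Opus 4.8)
The plan is to argue by structural induction on the construction of the low-defect polynomial $f$, following the three clauses in the definition of $\mathscr{P}$ and reducing each case to the target shape $h\otimes(g\otimes x_1+c)$. The structural facts I would lean on are all available: under $\otimes$ the number of variables adds (immediate from the definition of $\otimes$, and consistent with Proposition~\ref{polystruct}), the operation $\otimes$ is associative and commutative up to relabeling of variables (as noted just after the definition of $\mathscr{P}$), and the constant polynomial $1$ is itself a low-defect polynomial that acts as a two-sided identity for $\otimes$, so that $1\otimes p=p$ for every low-defect polynomial $p$.

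First I would dispose of the two easy clauses. If $f$ is a constant polynomial it has no variables, so its degree is $0$, contradicting $k>0$; this clause therefore never produces an $f$ to which the lemma applies. If instead $f=g\otimes x_1+c$, then $g$ is already a low-defect polynomial and $c$ is a positive integer (it must be positive for $\cpx{c}$ to be defined, and indeed Proposition~\ref{polystruct} forces the constant term $c$ to be nonzero), so $f$ is literally of the desired form upon setting $h=1$, since $1\otimes(g\otimes x_1+c)=g\otimes x_1+c=f$. The substantive case is $f=f_1\otimes f_2$. Because the number of variables adds under $\otimes$ and $f$ has positive degree, at least one factor has positive degree; by commutativity up to relabeling I may assume it is $f_2$. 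Applying the inductive hypothesis to $f_2$ yields low-defect polynomials $h',g$ and a positive integer $c$ with $f_2=h'\otimes(g\otimes x_1+c)$, and associativity of $\otimes$ then gives
\[ f = f_1\otimes\big(h'\otimes(g\otimes x_1+c)\big) = (f_1\otimes h')\otimes(g\otimes x_1+c). \]
Taking $h:=f_1\otimes h'$, which is again a low-defect polynomial as an $\otimes$-product of two such, completes this case and the induction.

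The step I would be most careful about is the reduction ``without loss of generality $f_2$ has positive degree'' together with the regrouping in the displayed equation. Both rest on the conventions announced after the definition of $\mathscr{P}$: that low-defect polynomials are regarded only up to relabeling of their variables, and that $\otimes$ is genuinely associative (and, in this relaxed sense, commutative). I would make explicit that identifying $f_1\otimes f_2$ with $f_2\otimes f_1$ is exactly what licenses choosing which factor to feed into the inductive hypothesis, and that the target form $h\otimes(g\otimes x_1+c)$ is insensitive to this identification, so no generality is lost. Beyond this bookkeeping the argument is routine, the whole content being that every low-defect polynomial of positive degree has, somewhere in its $\otimes$-factorization, a constituent built by clause~(3), which can be pulled out to the right.
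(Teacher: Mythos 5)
Your proof is correct and matches the paper's own argument essentially step for step: the same structural induction on the defining clauses of $\mathscr{P}$, the same use of $f=1\otimes(g\otimes x_1+c)$ in the additive case, and the same WLOG-plus-associativity regrouping $f=(f_1\otimes h')\otimes(g\otimes x_1+c)$ in the product case. Your extra care in justifying the commutativity/relabeling convention and the positivity of $c$ is fine but adds nothing beyond what the paper already notes.
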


\begin{proof}
We apply structural induction.  Since $f$ has degree greater than zero, it is
not a constant.  Hence either it can be written as $f_1 \otimes f_2$ (in which
case at least one of these has degree greater than zero) or as $g\otimes x_1 +
c$.  In the latter case we are done, writing $f=1\otimes(g\otimes x_1+c)$.

In the former case, without loss of generality, say $f_2$ has degree $r>0$.
(Since if $f_2$ is a constant, $f_1\otimes f_2=f_2 \otimes f_1$.) Then by the
inductive hypothesis, there are low-defect polynomials $g_2$ and $h_2$ and a
positive integer $c_2$ such that $f_2=h_2\otimes(g_2 \otimes x_1+c)$, so
$f=(f_1\otimes h_2)\otimes(g_2\otimes x_1+c)$, as needed.
\end{proof}

There is more that can be said about the structure of low-defect polynomials, as
we will show in \cite{seq2}.

\subsection{Numbers $3$-represented by low-defect polynomials}
\label{3rep}

We will obtain actual numbers from these polynomials by substituting in powers
of $3$ as mentioned in Section~\ref{intro}.  Let us state here the following
obvious but useful lemma:

\begin{lem}
\label{powers}
For any $a, b$, and $n$, $\cpx{ab^n}\le\cpx{a}+n\cpx{b}$.
\end{lem}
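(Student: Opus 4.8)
The plan is to reduce everything to the submultiplicativity of integer complexity and then induct on $n$. First I would record the basic inequality $\cpx{xy}\le\cpx{x}+\cpx{y}$, valid for all positive integers $x$ and $y$. This is immediate from the dynamic programming recursion quoted in Section~\ref{intro}: given most-efficient representations of $x$ and $y$, joining them with a single multiplication yields a representation of $xy$ using $\cpx{x}+\cpx{y}$ ones, so the minimum defining $\cpx{xy}$ can only be smaller. (When $x=1$ or $y=1$ the recursion, which ranges over factors strictly less than the product, does not literally apply, but the inequality holds trivially since $\cpx{1}=1$.)

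With submultiplicativity in hand, I would prove the lemma by induction on $n$. For the base case $n=0$ we have $ab^0=a$, so $\cpx{ab^0}=\cpx{a}=\cpx{a}+0\cdot\cpx{b}$, as desired. For the inductive step, suppose $\cpx{ab^{n-1}}\le\cpx{a}+(n-1)\cpx{b}$. Writing $ab^n=(ab^{n-1})\cdot b$ and applying submultiplicativity gives
\[
\cpx{ab^n}\le\cpx{ab^{n-1}}+\cpx{b}\le\cpx{a}+(n-1)\cpx{b}+\cpx{b}=\cpx{a}+n\cpx{b},
\]
completing the induction.

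There is no genuine obstacle here: the only content is the submultiplicativity inequality, and everything else is a routine induction. The single point deserving a word of care is the degenerate case in which a factor equals $1$, where the recursion as stated does not produce the relevant factorization directly; but the inequality survives because the trivial representation of $1$ contributes exactly $\cpx{1}$, so concatenating it with any representation of the other factor never exceeds the asserted bound.
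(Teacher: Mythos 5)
Your proof is correct and essentially matches the paper's: the paper likewise reduces to submultiplicativity (writing $\cpx{ab^n}\le\cpx{a}+\cpx{b^n}\le\cpx{a}+n\cpx{b}$ for $n\ge1$, with the power bound being the same induction you carry out explicitly) and, like you, treats $n=0$ as a separate case precisely because $\cpx{1}=1\ne 0$ makes the naive estimate fail there. No gaps; your unrolled induction and the paper's two-line argument are the same proof in different packaging.
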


\begin{proof}
If $n\ge 1$, then $\cpx{ab^n}\le\cpx{a}+\cpx{b^n}\le\cpx{a}+n\cpx{b}$.  Whereas
if $n=0$, then $\cpx{ab^n}=\cpx{a}=\cpx{a}+n\cpx{b}$.
\end{proof}

This provides an upper bound on the complexities of the outputs of these
polynomials:

\begin{prop}
\label{basicub}
If $(f,C)$ is a low-defect pair of degree $r$, then
\[\cpx{f(3^{n_1},\ldots,3^{n_r})}\le C+3(n_1+\ldots+n_r).\]
\end{prop}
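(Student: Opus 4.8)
The plan is to prove the bound by structural induction on the low-defect pair $(f,C)$, following the three generating rules in the definition of $\mathscr{P}$; the only inputs needed are the two subadditivity inequalities $\cpx{ab}\le\cpx{a}+\cpx{b}$ and $\cpx{a+b}\le\cpx{a}+\cpx{b}$ (both immediate from the definition of complexity), the equality $\cpx{3}=3$, and Lemma~\ref{powers}. For the base case, rule~(1), the polynomial $f=k$ is a constant, so it has degree $r=0$, no substitution occurs, and the claim reduces to $\cpx{k}\le C$, which is precisely the hypothesis $C\ge\cpx{k}$.

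For the product rule~(2), I would write $(f,C)=(f_1\otimes f_2,C_1+C_2)$ with each $(f_i,C_i)\in\mathscr{P}$, where $f_1$ has degree $r_1$ and $f_2$ has degree $r_2$, so that $r=r_1+r_2$. By the definition of $\otimes$, substituting powers of $3$ splits the variables into the first $r_1$ and the last $r_2$, producing the product $f_1(3^{n_1},\ldots,3^{n_{r_1}})\cdot f_2(3^{n_{r_1+1}},\ldots,3^{n_r})$. Applying $\cpx{ab}\le\cpx{a}+\cpx{b}$ and then the inductive hypothesis to each factor, the two resulting bounds add to give exactly $(C_1+C_2)+3(n_1+\cdots+n_r)$, as desired.

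Finally, for rule~(3), write $(f,C)=(g\otimes x_1+c,\,C'+D)$ with $(g,C')\in\mathscr{P}$, $D\ge\cpx{c}$, and $g$ of degree $r-1$, so that $f(3^{n_1},\ldots,3^{n_r})=g(3^{n_1},\ldots,3^{n_{r-1}})\cdot 3^{n_r}+c$. Here I would first apply $\cpx{a+b}\le\cpx{a}+\cpx{b}$ to peel off the summand $c$, at a cost of at most $\cpx{c}\le D$, then invoke Lemma~\ref{powers} with base $3$ to bound the cost of multiplying by $3^{n_r}$ by $n_r\cpx{3}=3n_r$, and finally apply the inductive hypothesis to $g$. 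Summing the three contributions $C'+3(n_1+\cdots+n_{r-1})$, $3n_r$, and $D$ yields $C+3(n_1+\cdots+n_r)$. I do not anticipate any genuine obstacle here: each step is an elementary application of one of the stated inequalities, and the only thing requiring care is the bookkeeping of which variables are routed to which factor under the $\otimes$ operation.
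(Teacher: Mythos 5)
Your proposal is correct and follows essentially the same route as the paper: structural induction over the three generating rules, with subadditivity of $\cpx{\cdot}$ under multiplication for the $\otimes$ case, and Lemma~\ref{powers} (which correctly absorbs the $n_r=0$ case, where naive subadditivity would lose by $\cpx{1}=1$) combined with additive subadditivity for the $g\otimes x_1+c$ case. The variable bookkeeping and the bound $\cpx{c}\le D$ match the paper's argument exactly.
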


\begin{proof}
We prove the statement by structural induction.  If $f$ is a constant $k$, then
$C\ge\cpx{k}$, and we are done.

If there are low-defect pairs $(g_1,D_1)$ and $(g_2,D_2)$ (say of degrees $s_1$
and $s_2$) such that $f=g_1\otimes g_2$ and $C=D_1+D_2$, then
\begin{eqnarray*}
\cpx{f(3^{n_1},\ldots,3^{n_r})} \le
\cpx{g_1(3^{n_1},\ldots,3^{n_{s_1}})}+\cpx{g_2(3^{n_{s_1+1}},\ldots,3^{n_r})}
\\ \le D_1+D_2+3(n_1+\ldots+n_r) = C+3(n_1+\ldots+n_r).
\end{eqnarray*}

In the last case, if there is a low-defect pair $(g,D)$ and a constant $c$ with
$C\ge D+\cpx{c}$ such that $f=g\otimes x_1 + c$, we apply
Lemma~\ref{powers}:
\begin{eqnarray*}
\cpx{f(3^{n_1},\ldots,3^{n_r})} \le
\cpx{g(3^{n_1},\ldots,3^{n_{r-1}})}+3n_r+\cpx{c} \\ \le
D+\cpx{c}+3(n_1+\ldots+n_r) \le C+3(n_1+\ldots+n_r).
\end{eqnarray*}

\end{proof}

Note that because of the two cases in the proof of Lemma~\ref{powers}, the
picture in Figure~\ref{treefig1} is slightly inaccurate; this is only the
picture when $3^k$ is plugged in for $k\ge 1$.  See Figure~\ref{treefig2} for an
illustration of what happens when we plug in $3^0$.

\begin{figure}
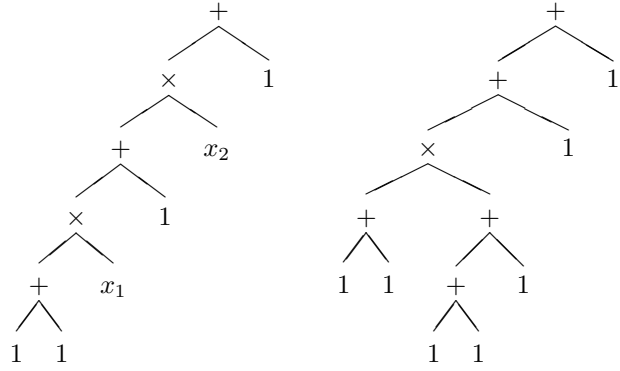

\caption{A tree corresponding to the polynomial $(2x_1+1)x_2+1$, and the same
tree after making the substitution $x_1=3^1$, $x_2=3^0$; observe how the top
multiplication node disappears.}
\begin{tabular}{cc}
\label{treefig2}
\begin{parsetree}
( .$+$.
	( .$\times$.
		( .$+$.
			( .$\times$.
				( .$+$.
					`$1$'
					`$1$'
				)
				`$x_1$'
			)
			`$1$'
		)
		`$x_2$'
	)
	`$1$'
)
\end{parsetree}
&
\begin{parsetree}
( .$+$.
	( .$+$.
		( .$\times$.
			( .$+$.
				`$1$'
				`$1$'
			)
			( .$+$.
				( .$+$.
					`$1$'
					`$1$'
				)
				`$1$'
			)
		)
		`$1$'
	)
	`$1$'
)
\end{parsetree}
\end{tabular}
\end{figure}

Because of Proposition~\ref{basicub}, we define:

\begin{defn}
Given a low-defect pair $(f,C)$ (say of degree $r$) and a number $N$, we will
say that $(f,C)$ \emph{efficiently $3$-represents} $N$ if there exist
nonnegative integers $n_1,\ldots,n_r$ such that $N=f(3^{n_1},\ldots,3^{n_r})$
and $\cpx{N}=C+3(n_1+\ldots+n_r)$.
More generally, we will also say $f$ $3$-represents $N$ if there exist
nonnegative integers $n_1,\ldots,n_r$ such that $N=f(3^{n_1},\ldots,3^{n_r})$.
\end{defn}

Note that if $(f,C)$ efficiently $3$-represents $N$, then $(f,\cpx{f})$
efficiently $3$-represents $N$, which means that in order for $(f,C)$ to
$3$-represent anything efficiently at all, we must have $C=\cpx{f}$.  However it
is still worth using low-defect pairs rather than just low-defect polynomials
since we may not always know $\cpx{f}$.  This paper will not be concerned with
these sorts of computational issues, but in a future paper \cite{seq2} we will
discuss how to refine the theorems here to allow for computation.

For this reason it makes sense to use ``$f$ efficiently $3$-represents $N$'' to
mean ``some $(f,C)$ efficiently $3$-represents $N$'' or equivalently
``$(f,\cpx{f})$ efficiently $3$-reperesents $N$''.

In keeping with the name, the numbers $3$-represented by a low-defect polynomial
have bounded defect.  First let us make two definitions:

\begin{defn}
Given a low-defect pair $(f,C)$, we define $\dft(f,C)$, the defect of $(f,C)$,
to be $C-3\log_3 a$, where $a$ is the leading coefficient of $f$.  When we are
not concerned with keeping track of base complexities, we will use $\dft(f)$ to
mean $\dft(f,\cpx{f})$.
\end{defn}

\begin{defn}
Given a low-defect pair $(f,C)$ of degree $r$, we define
\[\dft_{f,C}(n_1,\ldots,n_r) =
C+3(n_1+\ldots+n_r)-3\log_3 f(3^{n_1},\ldots,3^{n_r}).\]
We will also define $\dft_f$ to mean $\dft_{f,\cpx{f}}$ when we are not
concerned with keeping track of base complexities.
\end{defn}

Then we have:

\begin{prop}
\label{dftbd}
Let $(f,C)$ be a low-defect pair of degree $r$, and let $n_1,\ldots,n_r$ be
nonnegative integers.
\begin{enumerate}
\item We have
\[ \dft(f(3^{n_1},\ldots,3^{n_r}))\le \dft_{f,C}(n_1,\ldots,n_r)\]
and the difference is an integer.
\item We have \[\dft_{f,C}(n_1,\ldots,n_r)\le\dft(f,C)\]
and if $r\ge 1$, this inequality is strict.
\end{enumerate}
\end{prop}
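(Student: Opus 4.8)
The plan is to prove both parts by direct computation from the definitions, writing $N := f(3^{n_1},\ldots,3^{n_r})$ throughout and cancelling the common $-3\log_3 N$ terms that appear in every quantity involved.

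For part (1), I would subtract the two defects. Since $\dft(N) = \cpx{N} - 3\log_3 N$ and $\dft_{f,C}(n_1,\ldots,n_r) = C + 3(n_1+\cdots+n_r) - 3\log_3 N$, the logarithmic terms cancel and
\[ \dft_{f,C}(n_1,\ldots,n_r) - \dft(N) = C + 3(n_1+\cdots+n_r) - \cpx{N}. \]
This is nonnegative precisely by the upper bound of Proposition~\ref{basicub}, which yields the claimed inequality. For the integrality of the difference, I would observe that $C\in\N$ by the definition of a low-defect pair, that $3(n_1+\cdots+n_r)$ is an integer, and that $\cpx{N}$ is an integer; hence the whole expression is an integer.

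For part (2), I would again cancel the shared logarithm. Writing $a$ for the leading coefficient of $f$ and $s := n_1+\cdots+n_r$, so that $\dft(f,C) = C - 3\log_3 a$, a direct subtraction gives
\[ \dft(f,C) - \dft_{f,C}(n_1,\ldots,n_r) = 3\log_3 N - 3\log_3 a - 3s = 3\log_3\left(\frac{N}{a\,3^{s}}\right). \]
Thus the inequality $\dft_{f,C}(n_1,\ldots,n_r)\le\dft(f,C)$ is equivalent to $N \ge a\,3^{s}$, and the strictness claim is equivalent to $N > a\,3^{s}$ when $r\ge 1$. Here is where I would invoke Proposition~\ref{polystruct}: $f$ is multilinear with nonnegative integer coefficients, its leading coefficient (the coefficient of $x_1\cdots x_r$) is $a$, and its constant term is nonzero. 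Substituting $x_i = 3^{n_i}$, the leading monomial contributes exactly $a\,3^{n_1}\cdots 3^{n_r} = a\,3^{s}$, and since every remaining monomial contributes a nonnegative amount, $N \ge a\,3^{s}$.

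The only point requiring any care is the strict inequality. When $r\ge 1$, the leading monomial $x_1\cdots x_r$ has degree $r\ge 1$, so it is distinct from the constant term; since that constant term is a strictly positive integer contributing a positive amount to $N$ \emph{in addition} to the leading term, we obtain $N > a\,3^{s}$, hence strictness. (When $r=0$, the polynomial $f$ is the constant $a$ and $N = a = a\,3^{0}$, so equality holds, consistent with the statement.) No step here is genuinely hard: the entire proof is a matter of lining up the definitions and reading off the structural facts from Proposition~\ref{polystruct} together with the bound from Proposition~\ref{basicub}.
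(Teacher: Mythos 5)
Your proof is correct and follows essentially the same route as the paper: part (1) is Proposition~\ref{basicub} with $3\log_3 f(3^{n_1},\ldots,3^{n_r})$ subtracted from both sides (integrality because all terms in the resulting difference are integers), and part (2) rests on the bound $f(3^{n_1},\ldots,3^{n_r})\ge a\cdot 3^{n_1+\cdots+n_r}$ from Proposition~\ref{polystruct}, strict for $r\ge 1$ because the constant term is nonzero. Your write-up merely makes the cancellation of the logarithmic terms more explicit than the paper does; there is no substantive difference.
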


\begin{proof}
For part (1), observe that this inequality is just Proposition~\ref{basicub}
with the quantity $3\log_3(f(3^{n_1},\ldots,3^{n_r})$ subtracted off both sides.
And since Proposition~\ref{basicub} is an inequality of integers, the difference
is an integer.

For part (2), let $a$ denote the leading coefficient of $f$.  Then by
Proposition~\ref{polystruct},
\[ f(3^{n_1},\ldots,3^{n_r})\ge a\cdot3^{n_1+\ldots+n_r}, \]
and this inequality is strict if $r\ge1$ (since
the constant term of $f$ does not vanish).  So
\begin{eqnarray*}
\dft_{f,C}(n_1,\ldots,n_r) =
C + 3(n_1+\ldots+n_r) -3\log_3 f(3^{n_1},\ldots,3^{n_r})\\
\le C+3(n_1+\ldots+n_r) - 3\log_3(a) - 3(n_1+\ldots+n_r) \\
= C - 3\log_3(a)= \delta(f,C),
\end{eqnarray*}
and this inequality is strict if $r\ge 1$.
\end{proof}

\subsection{Low-defect polynomials give all leaders of small defect}
\label{mainthmsec}

The reason these polynomials are relevant is as follows:

\begin{thm}
\label{mainthm}
For any real $r\ge 0$, there exists a finite set $\sS_r$ of low-defect pairs
satisfying the following conditions:
\begin{enumerate}
\item Each $(f,C)\in \sS_r$ has degree at most $\lfloor r \rfloor$;
\item  For every $N\in B_r$, there exists some $(f,C)\in \sS_r$ that efficiently
$3$-represents $N$.
\end{enumerate}
\end{thm}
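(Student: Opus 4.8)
The plan is to construct the sets $\sS_r$ by an induction that mirrors the inductive covering of the leader sets $B_r$ furnished by Theorems~\ref{finite} and~\ref{themethod}. First I would reduce to a convenient value of $r$: given any $r\ge 0$, pick $t$ with $r\le t<\lfloor r\rfloor+1$ and write $t=(k+1)\alpha$ with $\alpha\in(0,1)$ and $k\ge 1$ (always possible by taking $k$ large). Since $B_r\subseteq B_t$ and $\lfloor t\rfloor=\lfloor r\rfloor$, it suffices to produce a finite $\sS_t$ covering $B_t$ with degrees at most $\lfloor t\rfloor$. The induction runs on $k$. The base case is Theorem~\ref{finite}: $B_\alpha$ is finite, and each $n\in B_\alpha$ is efficiently $3$-represented by the constant pair $(n,\cpx{n})$, of degree $0=\lfloor\alpha\rfloor$. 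For the inductive step I would read off from the five cases of Theorem~\ref{themethod} how to build each $n\in B_{(k+1)\alpha}$ from pairs already constructed for the lower $B_{i\alpha}$: a good factorization $n=uv$ (or $uvw$) in cases (1) and (5) becomes a product $(f_1,C_1)\otimes(f_2,C_2)$ of low-defect pairs; a representation $n=a+b$ with $a=3^{n'}m$, $m$ a leader, and $b$ solid, in cases (2) and (3), becomes the operation $(f,C)\mapsto(f\otimes x+b,\,C+\cpx{b})$ applied to the pair representing $m$ (and, in case (3), multiplied by the degree-$0$ pair for $v\in B_\alpha$); and the finitely many exceptional numbers of $T_\alpha$ in case (4) enter as constant pairs. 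Taking $\sS_t$ to be the union of all pairs produced this way finishes the construction.

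Two properties then require verification. Finiteness is routine: $T_\alpha$, $B_\alpha$, and (inductively) each $\sS_{i\alpha}$ are finite, and in cases (2) and (3) the inequality $\dft(a)+\cpx{b}<(k+1)\alpha+3\log_3 2$ bounds $\cpx{b}$ and hence confines $b$ to a finite set, so each operation is applied only finitely often. The covering property, condition (2), follows from Theorem~\ref{themethod} once one checks that each translated operation $3$-represents $n$ \emph{efficiently}, i.e.\ that the base complexities add up exactly. In the additive cases this is where Proposition~\ref{arbr} is used: it gives $\cpx{a}=\cpx{m}+3n'=C+3(\sum_i n_i+n')$, whence $\cpx{n}=\cpx{a}+\cpx{b}=(C+\cpx{b})+3(\sum_i n_i+n')$, matching the new pair exactly.

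The device that controls the degree, condition (1), is the inequality $\deg f\le\dft(f,C)$, valid for \emph{every} low-defect pair, which I would prove by structural induction on the definition of low-defect pairs. A constant has degree $0$ and nonnegative defect; for $f_1\otimes f_2$ the leading coefficients multiply and the base complexities add, so both degree and defect add; and for $f\otimes x+c$ the degree rises by $1$ while the leading coefficient is unchanged (Proposition~\ref{polystruct}), so the defect rises by exactly the added base complexity, which is at least $\cpx{c}\ge 1$. Together with Proposition~\ref{dftbd}, this lemma reduces the required degree bound to the assertion that the pairs can be chosen with $\dft(f,C)<\lfloor t\rfloor+1$; for then $\deg f\le\dft(f,C)<\lfloor t\rfloor+1$ gives $\deg f\le\lfloor t\rfloor=\lfloor r\rfloor$.

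The hard part will be exactly this control of the pair defects, and specifically reconciling the degree increments in cases (2) and (3) with the floor in $\lfloor(k+1)\alpha\rfloor$. Each use of $(f,C)\mapsto(f\otimes x+b,\,C+\cpx{b})$ raises the degree by one, so naive bookkeeping yields a degree bound of only $\lfloor k\alpha\rfloor+1$, which exceeds $\lfloor(k+1)\alpha\rfloor$ whenever no integer lies in $(k\alpha,(k+1)\alpha]$. Resolving this forces one to argue through the defect of the pair rather than its nominal degree, exploiting the slack $3\log_3 2<2$ in the case-(2) inequality to keep $\dft(f,C)$ below the next integer, and, crucially, to treat the substitution $x=3^0$ with care: as illustrated in Figure~\ref{treefig2}, plugging in $3^0$ collapses the top multiplication node, so a number represented only with that exponent equal to $0$ does not really require the extra variable and should be charged a lower effective degree. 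Ensuring that no pair is charged for a variable it does not genuinely use --- so that the degree never outruns $\lfloor r\rfloor$ --- is, I expect, the principal obstacle, and where the bulk of the careful estimation lies.
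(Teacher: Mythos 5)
Your scaffolding coincides with the paper's: the same induction driven by Theorems~\ref{finite} and~\ref{themethod}, the same translation of the five cases into the pair operations $\otimes$ and $(f,C)\mapsto(f\otimes x_1+b,\,C+\cpx{b})$, the same finiteness argument, and the same use of Proposition~\ref{arbr} to get efficiency in the additive cases. The genuine gap is in your handling of condition (1), and it begins with the reduction: you write $t=(k+1)\alpha$ ``taking $k$ large,'' but each pass through cases (2)/(3) adds one variable, so the construction's degrees grow with the number of inductive steps; taking $k$ large yields degrees up to $k\gg\lfloor t\rfloor$. You correctly sense that an induction on the hypothesis ``degree $\le\lfloor i\alpha\rfloor$ for $B_{i\alpha}$'' cannot close, but your proposed repair does not work. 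The lemma $\deg f\le\dft(f,C)$ is indeed true (by exactly the structural induction you sketch, since each $\otimes x_1+c$ step adds $1$ to the degree and at least $\cpx{c}\ge 1$ to the pair defect), but the construction does \emph{not} supply pairs with $\dft(f,C)<\lfloor t\rfloor+1$: each case-(2)/(3) step adds $\cpx{b}$ to the pair defect, bounded only by $(i+1)\alpha+3\log_3 2$, so iterated steps drive pair defects to roughly $\sum_i(i\alpha+3\log_3 2)$, far above $\lfloor t\rfloor+1$, even though the degree grows only by one per step; already the degree-zero triple products $u_1u_2u_3$ with $u_i\in B_\alpha$ have pair defect bounded only by $3\alpha$, which need not lie below $\lfloor 2\alpha\rfloor+1=2$ when $\alpha>2/3$. (Controlling which numbers a chosen $\sS_r$ represents and at what defect is a substantive refinement the paper explicitly defers to the sequel \cite{seq2}.) Your worry about $x_i=3^0$ is also a red herring for this theorem: efficient $3$-representation permits zero exponents from the outset, and Lemma~\ref{powers} absorbs the collapsing node, so no ``effective degree'' accounting is needed.

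The paper's resolution is much lighter than what you envision, and it is worth internalizing: it strengthens the inductive statement to ``there is a finite $\sS_{i,\alpha}$, all pairs of degree at most $i-1$, covering $B_{i\alpha}$'' --- that is, it bounds the degree by the \emph{step counter} rather than by $\lfloor i\alpha\rfloor$ or by the pair defect, which is sound because only cases (2)/(3) raise the degree and they do so once per step --- and then it makes the step count minimal by choosing $k=\lfloor r\rfloor+1$ and $\alpha=r/(\lfloor r\rfloor+1)$, so that $k\alpha=r$ exactly and the final bound is $k-1=\lfloor r\rfloor$. With that choice there is no floor reconciliation and no defect control to carry out; the intermediate sets $\sS_{i,\alpha}$ may well use degrees exceeding $\lfloor i\alpha\rfloor$, which is harmless since the theorem is only asserted for the final $r$. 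If you replace your opening reduction and your defect-control step by this parametrization and the weaker intermediate degree bound, the remainder of your argument goes through essentially as you wrote it.
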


\begin{proof}
We prove this statement in the following form: For any real $\alpha\in(0,1)$ and
any integer $k\ge 1$, there exists a finite set $\sS_{k,\alpha}$ of low-defect
pairs, each of degree at most $k-1$, such that for every $N\in B_{k\alpha}$
there exists some $(f,C)\in \sS_{\alpha,r}$ that efficiently $3$-represents $N$.
Once we have this, the result will follow by taking $\sS_r=\sS_{k,\alpha}$ for
$k=\lfloor r \rfloor+1$ and $\alpha=\frac{r}{\lfloor r \rfloor + 1}$.

We prove this by induction on $k$.  If $k=1$, then $B_\alpha$ is finite by
Theorem~\ref{finite}, so we can take $\sS_{1,\alpha}=\{(N,\cpx{N}) : N\in
B_\alpha \}$.  Now suppose the statement is true for $k$, and we want to prove
it for $k+1$, so we have already constructed sets $\sS_{i,\alpha}$ for $i\le k$.
   
We will define the set $\sS_{k+1,\alpha}$ to consist of the following:

\begin{enumerate}
\item If $k+1>2$, then for $(f,C)\in \sS_{i,\alpha}$ and $(g,D)\in
\sS_{j,\alpha}$ with $2\le i, j\le k$ and $i+j=k+2$ we include $(f\otimes g,
C+D)$ in $\sS_{k+1,\alpha}$; \\
while if $k+1=2$, then for $(f_1,C_1),(f_2,C_2),(f_3,C_3)\in \sS_{1,\alpha}$,
we include $(f_1 \otimes f_2, C_1+C_2)$ and $(f_1\otimes f_2\otimes f_3,
C_1+C_2+C_3)$ in $\sS_{2,\alpha}$.
\item For $(f,C)\in \sS_{k,\alpha}$ and any solid number $b$
with $\cpx{b}<(k+1)\alpha+3\log_3 2$, we include $(f\otimes x_1 + b, C+\cpx{b})$
in $\sS_{k+1,\alpha}$.
\item For $(f,C)\in \sS_{k,\alpha}$, any solid number $b$
with $\cpx{b}<(k+1)\alpha+3\log_3 2$, and any $v\in B_\alpha$, we include
$(v(f\otimes x_1 + b), C+\cpx{b}+\cpx{v})$ in $\sS_{k+1,\alpha}$.
\item For all $n\in T_\alpha$, we include $(n,\cpx{n})$ in $\sS_{k+1,\alpha}$.
\item For all $n\in T_\alpha$ and $v\in B_\alpha$, we include $(vn,\cpx{vn})$ in
$\sS_{k+1,\alpha}$.
\end{enumerate}

This is a finite set, as the $\sS_i$ for $i\le k$ are all finite, $B_\alpha$ is
finite, $T_\alpha$ is finite, and there are only finitely many $b$ satisfying
$\cpx{b}<(k+1)\alpha+3\log_3 2$, as this implies that
\[3\log_3 b<(k+1)\alpha+3\log_3 2.\]
Also, all elements of $\sS_{k+1,\alpha}$ have degree at most $k$: In case (1),
if $k+1>2$, $f$ and $g$ have degree at most $i-1$ and and $j-1$ respectively, so
$f\otimes g$ has degree at most $i+j-2=k$, while if $k+1=2$, then $f_1, f_2,$
and $f_3$ all have degree $0$, so $f_1 \otimes f_2$ and $f_1 \otimes f_2 \otimes
f_3$ also have degree $0$.  In cases (2) and (3), $f$ has degree at most $k-1$,
so $f\otimes x_1 + b$ has degree at most $k$.  Finally, in cases (4) and (5), we
are adding low-defect pairs of degree $0$.

So suppose that $N\in B_{(k+1)\alpha}$; we apply Theorem~\ref{themethod}.

In case (1) of Theorem~\ref{themethod}, if $k+1>2$, then there is a good
factorization $N=uv$ where $u\in B_{i\alpha}$, $v\in B_{j\alpha}$ with $i+j=k+2$
and $2\le i, j\le k$.  So by the inductive hypothesis, we can take $(f,C)\in
\sS_{i,\alpha}$ and $(g,D)\in \sS_{j,\alpha}$ such that $(f,C)$ efficiently
$3$-represents $u$ and $(g,D)$ efficiently $3$-represents $v$.  Since the
factorization $N=uv$ is good, it follows that $(f\otimes g,C+D)$ efficiently
represents $N$.  If $k+1=2$, there is either a good factorization $n=u_1 u_2$ or
a good factorization $n=u_1 u_2 u_3$ with all $u_\ell\in{B}_\alpha$.  So take
$(f_\ell,C_\ell)\in \sS_{1,\alpha}$ such that $(f_\ell,C_\ell)$ efficiently
$3$-represents $u_l$; then either $(f_1\otimes f_2, C_1+C_2)$ or $(f_1\otimes
f_2 \otimes f_3, C_1+C_2+C_3)$ efficiently $3$-represents $N$, as appropriate.

In case (2) of Theorem~\ref{themethod}, there are $a$ and $b$ with $N=a+b$,
$\cpx{N}=\cpx{a}+\cpx{b}$, $a\in A_{k\alpha}$, $b\le a$ a solid number, and
\[\dft(a)+\cpx{b}<(k+1)\alpha+3\log_3 2.\] In particular, we have
$\cpx{b}<(k+1)\alpha+3\log_3 2$.  Write $a=a' 3^\ell$ with $a'$ a leader and
$\cpx{a}=\cpx{a'}+3\ell$, so $a'\in B_{k\alpha}$, and pick $(f,C)\in
\sS_{k,\alpha}$ that efficiently $3$-represents $a'$.  Then $(f\otimes x_1 +
b,C+\cpx{b})$ is in $\sS_{k+1,\alpha}$ and efficiently $3$-represents $N$.  In
case (3) of Theorem~\ref{themethod}, there is a good factorization $n=(a+b)v$
with $v\in B_\alpha$ and $a$ and $b$ satisfying the conditions in the case (2)
of Theorem~\ref{themethod}, so the proof is similar; if we write $a=a' 3^\ell$
with $a'$ a leader and $\cpx{a}=\cpx{a'}+3\ell$ and pick $(f,C)\in
\sS_{k,\alpha}$ efficiently $3$-representing $a'$, then $(v(f\otimes
x_1+b),C+\cpx{b}+\cpx{v})$ efficiently $3$-represents $N$.

Finally, in cases (4) and (5) of Theorem~\ref{themethod}, the pair $(N,\cpx{N})$
is itself in $\sS_{k+1,\alpha}$, by cases (4) and (5) above.  This proves the
theorem.
\end{proof}

Note that while this theorem produces a covering of $B_r$, there is no guarantee
that for $f\in \sS_r$, all the numbers $3$-represented by $f$ will have defect
less than $r$; and in general this will not be the case.  For instance, if we
use the method of the proof of Theorem~\ref{mainthm} to produce the set $\sS_1$,
it will contain the polynomial $16x_1+1$, which $3$-represents the number $17$,
which has defect greater than $1$.  This deficiency will be remedied in a sequel
paper \cite{seq2}, where it will be shown how to choose the $\sS_r$ to get this
additional property.  There is also no guarantee that the numbers
$3$-represented by $f$ will be leaders; for instance, if we use this method to
produce the set $\sS_1$, it will also contain the constant polynomials $9$ and
$27$.

\subsection{Augmented low-defect polynomials}

Theorem~\ref{mainthm} gives us a representation of the leaders with defect less
than a fixed $r$, but we want to consider all numbers with defect less than $r$.
However, by Proposition~\ref{arbr}, any number can be written most-efficiently
as $3^k m$ for some $k\ge 0$ and some leader $m$.  To account for this, we
introduce the notion of an augmented low-defect polynomial:

\begin{defn}
For any low-defect polynomial $f$, we define $\xpdd{f}=f\otimes x$.  The
polynomial $\xpdd{f}$ will be called an \emph{augmented low-defect polynomial}.
For a low-defect pair $(f,C)$, the pair $(\xpdd{f},C)$ will be called an
\emph{augmented low-defect pair}.
\end{defn}

Note that augmented low-defect polynomials are never low-defect polynomials; by
Proposition~\ref{polystruct}, low-defect polynomials always have nonzero
constant term, while an augmented low-defect polynomial always has zero constant
term.

We can then make the following observations and definitions, parallel to the
contents of Subsections~\ref{3rep} and \ref{mainthmsec}:

\begin{cor}
\label{augbasicub}
If $(f,C)$ is a low-defect pair of degree $r$, then
\[\cpx{\xpdd{f}(3^{n_1},\ldots,3^{n_{r+1}})}\le C+3(n_1+\ldots+n_{r+1}).\]
\end{cor}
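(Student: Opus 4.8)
The plan is to reduce this directly to Proposition~\ref{basicub} together with Lemma~\ref{powers}. First I would unwind the definition $\xpdd{f}=f\otimes x$, which by the definition of $\otimes$ means that for a low-defect pair $(f,C)$ of degree $r$ we have
\[ \xpdd{f}(3^{n_1},\ldots,3^{n_{r+1}}) = f(3^{n_1},\ldots,3^{n_r})\cdot 3^{n_{r+1}}. \]
So the quantity to be bounded is simply the complexity of a power of $3$ times a number already $3$-represented by $f$.

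A key point worth flagging is that one cannot simply invoke Proposition~\ref{basicub} on $\xpdd{f}$ itself, since $\xpdd{f}$ is an augmented low-defect polynomial and not a low-defect polynomial (it has zero constant term, as noted immediately after its definition). Instead I would apply Proposition~\ref{basicub} to the genuine low-defect pair $(f,C)$ to obtain
\[ \cpx{f(3^{n_1},\ldots,3^{n_r})}\le C+3(n_1+\ldots+n_r), \]
and then absorb the extra factor $3^{n_{r+1}}$ using Lemma~\ref{powers} with $a=f(3^{n_1},\ldots,3^{n_r})$, $b=3$, and $n=n_{r+1}$, which yields
\[ \cpx{f(3^{n_1},\ldots,3^{n_r})\cdot 3^{n_{r+1}}}\le \cpx{f(3^{n_1},\ldots,3^{n_r})}+n_{r+1}\cpx{3}. \]

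Finally I would combine the two inequalities, using $\cpx{3}=3$, to conclude
\[ \cpx{\xpdd{f}(3^{n_1},\ldots,3^{n_{r+1}})}\le C+3(n_1+\ldots+n_r)+3n_{r+1}=C+3(n_1+\ldots+n_{r+1}). \]
There is no substantive obstacle here: the statement is essentially a one-line corollary of the two cited results, and the only thing to be careful about is not attempting to apply Proposition~\ref{basicub} directly to $\xpdd{f}$. An alternative, essentially equivalent, route would be to redo the structural-induction argument of Proposition~\ref{basicub} with one extra invocation of Lemma~\ref{powers} to handle the new variable $x_{r+1}$; but routing through Proposition~\ref{basicub} as a black box and appending a single application of Lemma~\ref{powers} is cleaner.
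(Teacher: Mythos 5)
Your proposal is correct and matches the paper's proof, which simply declares the corollary ``immediate from Proposition~\ref{basicub} and Lemma~\ref{powers}''; you have spelled out exactly that combination, applying Proposition~\ref{basicub} to $(f,C)$ and then Lemma~\ref{powers} with $b=3$, $n=n_{r+1}$. Your remark that Proposition~\ref{basicub} cannot be applied directly to $\xpdd{f}$ (since it is not a low-defect polynomial) is a correct and worthwhile clarification of why the extra step is needed.
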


\begin{proof}
This is immediate from Proposition~\ref{basicub} and Lemma~\ref{powers}.
\end{proof}

\begin{defn}
Given a low-defect pair $(f,C)$ (say of degree $r$) and a number $N$,
we will say $(\xpdd{f},C)$ efficiently $3$-represents $N$ if there exist
$n_1,\ldots,n_{r+1}$ such that $N=\xpdd{f}(3^{n_1},\ldots,3^{n_{r+1}})$ and
$\cpx{N}=C+3(n_1+\ldots+n_{r+1})$.  More generally, we will also say $\xpdd{f}$
$3$-represents $N$ if there exist $n_1,\ldots,n_{r+1}$ such that
$N=\xpdd{f}(3^{n_1},\ldots,3^{n_{r+1}})$.
\end{defn}

\begin{cor}
\label{augdftbd}
Let $(f,C)$ be a low-defect pair of degree $r$, and let $n_1,\ldots,n_r$ be
nonnegative integers. Then
\[ \dft(\xpdd{f}(3^{n_1},\ldots,3^{n_{r+1}}))\le \dft_{f,C}(n_1,\ldots,n_r)\]
and the difference is an integer.
\end{cor}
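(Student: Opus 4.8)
The plan is to reduce this statement to two results already in hand, namely Proposition~\ref{dftbd}(1) and part~(2) of Theorem~\ref{oldprops}, by peeling off the single extra variable that distinguishes $\xpdd{f}$ from $f$. The key observation is that since $\xpdd{f}=f\otimes x$, the definition of $\otimes$ gives, upon substituting powers of $3$,
\[ \xpdd{f}(3^{n_1},\ldots,3^{n_{r+1}}) = f(3^{n_1},\ldots,3^{n_r})\cdot 3^{n_{r+1}}, \]
so that, writing $M=f(3^{n_1},\ldots,3^{n_r})$, the number under consideration is simply $3^{n_{r+1}}M$. This identification is the only genuinely conceptual point; once it is in place everything else is mechanical.

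First I would apply part~(2) of Theorem~\ref{oldprops} with $k=n_{r+1}$ to the number $M$, obtaining $\dft(3^{n_{r+1}}M)\le\dft(M)$ with the difference $\dft(M)-\dft(3^{n_{r+1}}M)$ a nonnegative integer. Next I would invoke Proposition~\ref{dftbd}(1), which gives $\dft(M)\le\dft_{f,C}(n_1,\ldots,n_r)$, again with integer difference. Chaining these two inequalities yields
\[ \dft(\xpdd{f}(3^{n_1},\ldots,3^{n_{r+1}}))\le\dft_{f,C}(n_1,\ldots,n_r), \]
which is precisely the asserted bound. Note that the right-hand side legitimately involves only $n_1,\ldots,n_r$, consistent with $\dft_{f,C}$ being defined for the degree-$r$ pair $(f,C)$.

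For the integrality claim I would observe that the total gap $\dft_{f,C}(n_1,\ldots,n_r)-\dft(3^{n_{r+1}}M)$ is exactly the sum of the two differences just computed: one an integer by Proposition~\ref{dftbd}(1), the other a nonnegative integer by Theorem~\ref{oldprops}(2). A sum of integers is an integer, so we are done. I do not expect any real obstacle here, as this is a direct corollary; the only thing requiring care is to keep track of both constituent differences being integers so that their sum inherits the property, rather than merely asserting the inequality.
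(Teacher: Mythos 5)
Your proof is correct, but it routes through different prior results than the paper does. The paper proves this corollary in a single step from Corollary~\ref{augbasicub}: subtracting $3\log_3\xpdd{f}(3^{n_1},\ldots,3^{n_{r+1}})$ from both sides of $\cpx{\xpdd{f}(3^{n_1},\ldots,3^{n_{r+1}})}\le C+3(n_1+\ldots+n_{r+1})$ turns the left side into the defect and the right side into exactly $\dft_{f,C}(n_1,\ldots,n_r)$ (the $3n_{r+1}$ on the right is absorbed by the $3\log_3 3^{n_{r+1}}$ coming out of the product), and integrality is immediate because Corollary~\ref{augbasicub} is an inequality between integers. You bypass Corollary~\ref{augbasicub} entirely and work at the defect level: you factor $\xpdd{f}(3^{n_1},\ldots,3^{n_{r+1}})=3^{n_{r+1}}M$ with $M=f(3^{n_1},\ldots,3^{n_r})$, then chain Theorem~\ref{oldprops}(2) with Proposition~\ref{dftbd}(1) and sum the two integer gaps. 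The arguments are morally parallel --- Theorem~\ref{oldprops}(2) encapsulates the same basic inequality $\cpx{3^k m}\le 3k+\cpx{m}$ that Lemma~\ref{powers} feeds into Corollary~\ref{augbasicub} --- but each buys something slightly different: the paper's version is more economical and exactly mirrors the proof of Proposition~\ref{dftbd}(1), while yours decomposes the total difference into two individually meaningful integers, one of which (by Theorem~\ref{oldprops}(2)) is moreover nonnegative, a refinement the paper's phrasing does not record. Your argument is complete; there is no gap.
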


\begin{proof}
This inequality is just Corollary~\ref{augbasicub} with
$3\log_3\xpdd{f}(3^{n_1},\ldots,3^{n_{r+1}})$ subtracted off both sides.  And
since Corollary~\ref{augbasicub} is an inequality of integers, the difference
is an integer.
\end{proof}

\begin{thm}
\label{augmainthm}
For any real $r\ge 0$, there exists a finite set $\sS_r$ of low-defect pairs
satisfying the following conditions:
\begin{enumerate}
\item Each $(f,C)\in \sS_r$ has degree at most $\lfloor r \rfloor$;
\item  For every $N\in A_r$, there exists some $(f,C)\in \sS_r$ such that
$(\xpdd{f},C)$ that efficiently $3$-represents $N$.
\end{enumerate}
\end{thm}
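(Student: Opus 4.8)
The plan is to reuse verbatim the finite set $\sS_r$ of low-defect pairs produced by Theorem~\ref{mainthm}, and to reduce the task of covering all of $A_r$ to the covering of the leaders $B_r$ that is already in hand. The bridge is Proposition~\ref{arbr}, which says that every $n\in A_r$ factors most-efficiently as $n=3^k m$ for a unique leader $m\in B_r$ and some $k\ge 0$, with $\cpx{n}=\cpx{m}+3k$. Thus the only new ingredient we need is a way to absorb the extra factor $3^k$, and that is exactly what the augmentation $\xpdd{f}=f\otimes x$ provides: the fresh variable of $\xpdd{f}$ is the slot into which we substitute $3^k$.

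Concretely, I would argue as follows. Take $\sS_r$ as in Theorem~\ref{mainthm}; condition (1) is then immediate, since it is literally the same set of pairs with the same degrees. For condition (2), let $N\in A_r$. By Proposition~\ref{arbr}, write $N=3^k m$ with $m\in B_r$ a leader, $\dft(N)=\dft(m)$, and $\cpx{N}=\cpx{m}+3k$. Theorem~\ref{mainthm} supplies a pair $(f,C)\in\sS_r$, say of degree $s$, that efficiently $3$-represents $m$: there are nonnegative integers $n_1,\ldots,n_s$ with $m=f(3^{n_1},\ldots,3^{n_s})$ and $\cpx{m}=C+3(n_1+\ldots+n_s)$. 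Setting $n_{s+1}=k$ and recalling that $\xpdd{f}(x_1,\ldots,x_{s+1})=f(x_1,\ldots,x_s)x_{s+1}$, we obtain
\[ \xpdd{f}(3^{n_1},\ldots,3^{n_s},3^k)=f(3^{n_1},\ldots,3^{n_s})\cdot 3^k=m\cdot 3^k=N, \]
while
\[ \cpx{N}=\cpx{m}+3k=C+3(n_1+\ldots+n_s)+3k=C+3(n_1+\ldots+n_{s+1}). \]
Hence $(\xpdd{f},C)$ efficiently $3$-represents $N$, as required.

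There is essentially no hard part here: all of the genuine combinatorial content was carried out in Theorem~\ref{mainthm}, and what remains is bookkeeping built on Proposition~\ref{arbr}. The one point to keep straight is that the degree bound in condition (1) is imposed on the unaugmented pairs $(f,C)\in\sS_r$, not on $\xpdd{f}$; the augmentation adds exactly one variable and enters only inside condition (2), so it does not affect the degree count. With that convention the two conditions decouple cleanly, and the argument reduces to the defining identity $\xpdd{f}=f\otimes x$ together with the complexity accounting $\cpx{3^k m}=\cpx{m}+3k$ recorded in Proposition~\ref{arbr}.
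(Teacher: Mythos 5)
Your proposal is correct and is precisely the paper's argument: the paper proves Theorem~\ref{augmainthm} by noting it is immediate from Theorem~\ref{mainthm} together with Proposition~\ref{arbr}, which is exactly the decomposition $N=3^k m$ and substitution of $3^k$ into the augmented variable that you spell out. Your version merely makes the bookkeeping explicit; there is no difference in approach.
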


\begin{proof}
This is immediate from Theorem~\ref{mainthm} and Proposition~\ref{arbr}.
\end{proof}

\section{Facts from order theory and topology}
\label{external}

This section collects facts about well orderings and partial orderings
needed to prove the main result. 
Recall that a {\em well partial order} is a partial order which is well-founded (has
no infinite descending chains) and has no infinite antichains.  Any
totally-ordered extension of a well partial order is well-ordered.  Given a well
partial order $X$, we can consider the set of order types of well-orders
obtained by extending the ordering on $X$.  It was proved by D.H.J.~De Jongh and
R.~Parikh \cite[Theorem 2.13]{wpo} that for any well partial order $X$, the set
of ordinals obtained this way has a maximum; this maximum is denoted $o(X)$.
They further proved \cite[Theorem 3.4, Theorem 3.5]{wpo}:

\begin{thm}
\label{naturalops}
Let $X$ and $Y$ be two well partial orders.  Then $X\amalg Y$ and $X\times Y$
are well partial orders, and $o(X\amalg Y)=o(X)\oplus o(Y)$, and $o(X\times
Y)=o(X)\otimes o(Y)$, where $\oplus$ and $\otimes$ are the operations of natural
sum and natural product (also known as the Hessenberg sum and Hessenberg
product).
\end{thm}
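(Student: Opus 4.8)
The plan is to establish the four assertions in turn: that $X \amalg Y$ and $X \times Y$ are again well partial orders, and the identities $o(X \amalg Y) = o(X) \oplus o(Y)$ and $o(X \times Y) = o(X) \otimes o(Y)$, the latter two each split into a ``$\ge$'' and a ``$\le$'' half. For the closure statements I would invoke the standard characterization that a quasi-order is a well partial order precisely when every infinite sequence has an infinite weakly increasing subsequence. For $X \amalg Y$ this is immediate: an infinite sequence has infinitely many terms in one of the two factors, and that factor supplies the subsequence. For $X \times Y$ I would first pass to a subsequence of $(x_n, y_n)$ on which the $x_n$ are weakly increasing, using that $X$ is a well partial order, and then refine once more using $Y$ so that the $y_n$ are weakly increasing too; the result is weakly increasing coordinatewise.

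For the sum identity I would reduce to the classical fact (F1) that the maximal order type of a linear extension of a disjoint union of two well-orders of types $\alpha$ and $\beta$, with no relations between the parts, is exactly $\alpha \oplus \beta$. For ``$\ge$'', fix optimal linear extensions $L_X$ and $L_Y$ of $X$ and $Y$, of types $o(X)$ and $o(Y)$; the poset $L_X \amalg L_Y$ refines $X \amalg Y$, so its linear extensions are linear extensions of $X \amalg Y$, and (F1) furnishes one of type $o(X) \oplus o(Y)$. For ``$\le$'', an arbitrary linear extension $L$ of $X \amalg Y$ restricts to linear extensions of $X$ and $Y$, of types $\alpha \le o(X)$ and $\beta \le o(Y)$; since $L$ is just an interleaving of these two well-orders, (F1) together with the monotonicity of $\oplus$ shows that the order type of $L$ is at most $\alpha \oplus \beta \le o(X) \oplus o(Y)$.

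The product identity is where the real difficulty lies. The lower bound $o(X \times Y) \ge o(X) \otimes o(Y)$ goes through exactly as for the sum: refine to $L_X \times L_Y$ and invoke the analogous classical evaluation of the maximal linear extension of a coordinatewise product of two ordinals. The upper bound $o(X \times Y) \le o(X) \otimes o(Y)$, however, cannot be reduced to chains, since a linear extension of $X \times Y$ need not refine any product $L_X \times L_Y$ and so cannot simply be restricted to the two coordinates; even the chain case alone does not suffice. Here I would argue by transfinite induction using the recursion $o(P) = \sup_{p \in P} \big( o(P_{\not\ge p}) + 1 \big)$, where $P_{\not\ge p} = \{ q \in P : q \not\ge p \}$, valid for every nonempty well partial order $P$. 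Applied to $P = X \times Y$ and $p = (x,y)$ it yields $(X \times Y)_{\not\ge (x,y)} = (X_{\not\ge x} \times Y) \cup (X \times Y_{\not\ge y})$, and the inductive step consists of bounding the order type of this union and matching it against the Conway-style recursion that expresses $o(X) \otimes o(Y)$ as a supremum built from the values $o(X_{\not\ge x})$ and $o(Y_{\not\ge y})$. Establishing the recursion for $o$ and reconciling this non-disjoint union with the ordinal recursion for $\otimes$ is the crux of the argument, and the step I expect to be the main obstacle; it is precisely the technical heart of the De Jongh--Parikh theorem.
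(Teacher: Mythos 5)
First, a point of comparison that matters: the paper does not prove this theorem at all — it quotes it from De Jongh and Parikh \cite[Theorems 3.4 and 3.5]{wpo} — so your proposal must stand on its own as a proof of the cited result. The parts you carry out in full are correct: the subsequence argument for closure of $\amalg$ and $\times$ under well partial ordering is standard, the sum identity reduces exactly as you say to Carruth's shuffle theorem (the paper even cites \cite{carruth} for related facts), the lower bound $o(X\times Y)\ge o(X)\otimes o(Y)$ follows from the chain evaluation you invoke, and the recursion $o(P)=\sup_{p\in P}\bigl(o(P_{\not\ge p})+1\bigr)$ is indeed valid for nonempty well partial orders.

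The genuine gap is exactly where you predict it, and it is worse than an unfinished computation: the inductive step you sketch fails as stated. From $(X\times Y)_{\not\ge(x,y)}=(X_{\not\ge x}\times Y)\cup(X\times Y_{\not\ge y})$ and the union estimate $o(A\cup B)\le o(A)\oplus o(B)$, induction yields only
\[ o\bigl((X\times Y)_{\not\ge(x,y)}\bigr)\le\bigl(o(X_{\not\ge x})\otimes o(Y)\bigr)\oplus\bigl(o(X)\otimes o(Y_{\not\ge y})\bigr), \]
and the supremum of these bounds plus one can strictly exceed $o(X)\otimes o(Y)$, because the overlap $X_{\not\ge x}\times Y_{\not\ge y}$ is counted twice. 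Concretely, take $X=\omega+1$ and $Y=\omega$ as chains, so $o(X)\otimes o(Y)=(\omega+1)\otimes\omega=\omega^2+\omega$. For $p=(\omega,n)$ your bound reads $\omega^2\oplus(\omega n+n)=\omega^2+\omega n+n$, and $\sup_n(\omega^2+\omega n+n+1)=\omega^2\cdot 2>\omega^2+\omega$, so the recursion cannot be matched against the target value. (The true value is $o\bigl((X\times Y)_{\not\ge(\omega,n)}\bigr)=\omega^2+n$, visible from the \emph{disjoint} decomposition $P_{\not\ge(x,y)}=(X_{\not\ge x}\times Y)\amalg(X_{\ge x}\times Y_{\not\ge y})$, where $X_{\ge x}=\{x'\in X:x'\ge x\}$.) The disjoint version is the right starting point, but it introduces $o(X_{\ge x})$, a quantity the induction hypothesis does not control in terms of $o(X)$, so the recursion still does not close by itself. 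This is why De Jongh and Parikh's actual proof of the product upper bound does not run this induction naively: it proceeds through structural lemmas relating partitions of a well partial order to natural-sum decompositions of its maximal order type, with a separate analysis at additively and multiplicatively indecomposable ordinals. Since you yourself flag this step as the unproven crux, what you have is an honest reduction of the theorem to its hard part, together with a proposed attack on that part which, as sketched, demonstrably overshoots — not a proof.
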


The natural sum and natural product are defined as follows \cite{wpo}:
\begin{defn}
The \emph{natural sum} (also known as the \emph{Hessenberg sum}) of two ordinals
$\alpha$ and $\beta$, here denoted $\alpha \oplus \beta$, is defined by simply
adding up their Cantor normal forms as if they were ``polynomials in $\omega$''.
That is to say, if there are ordinals $\gamma_0 < \ldots < \gamma_n$ and whole
numbers $a_0, \ldots, a_n$ and $b_0, \ldots, b_n$ such that
$\alpha = \omega^{\gamma_n}a_n + \ldots + \omega^{\gamma_0}a_0$ and $\beta =
\omega^{\gamma_n}b_n + \ldots + \omega^{\gamma_0}b_0$, then
\[ \alpha\oplus\beta = \omega^{\gamma_n}(a_n+b_n) + \ldots + 
	\omega^{\gamma_0}(a_0+b_0). \]

Similarly, the \emph{natural product} (also known as the \emph{Hessenberg
product}) of $\alpha$ and $\beta$, here denoted $\alpha \otimes \beta$, is
defined by multiplying their Cantor normal forms as if they were ``polynomials
in $\omega$'', using the natural sum to add the exponents.  That is to say,
if we write $\alpha = \omega^{\gamma_n} a_n + \ldots + \omega^{\gamma_0} a_0$
and $\beta = \omega^{\delta_m}b_m + \ldots + \omega^{\delta_0}b_0$ with
$\gamma_0 < \ldots < \gamma_0$ and $\delta_0 < \ldots < \delta_m$ ordinals and
the $a_i$ and $b_i$ whole numbers, then
\[ \alpha\otimes\beta = \bigoplus_{\substack{0 \le i \le n \\ 0\le j \le m}}
	\omega^{\gamma_i \oplus \delta_j} a_i b_j. \]
\end{defn}

These operations are commutative and associative, and $\otimes$ distributes over
$\oplus$.  The expression $\alpha \oplus \beta$ is strictly increasing in
$\alpha$ and $\beta$; and $\alpha \otimes \beta$ is strictly increasing in
$\beta$ so long as $\alpha\ne 0$, and vice versa \cite{carruth}.

There are other definitions of these operations.  Given ordinals $\alpha$ and
$\beta$, $\alpha \oplus \beta$ is sometimes defined as $o(\alpha\amalg\beta)$,
and $\alpha \otimes \beta$ as $o(\alpha\times\beta)$, where for this definition
we consder $\alpha$ and $\beta$ as partial orders).  As noted above, De Jongh
and Parikh showed the stronger statement Theorem~\ref{naturalops}, from which it
follows that
\begin{eqnarray*}
o(\alpha_1 \amalg \ldots \amalg \alpha_n) & = &
	\alpha_1 \oplus \ldots \oplus \alpha_n \\
o(\alpha_1 \times \ldots \times \alpha_n) & = &
	\alpha_1 \otimes \ldots \otimes \alpha_n
\end{eqnarray*}

There is also a recursive definition \cite{ONAG}.

Note also the following statements about well partial orderings:

\begin{prop}
\label{image}
Suppose that $X$ is a well partially ordered set, $S$ a totally ordered set, and
$f:X\to S$ is monotonic.  Then $f(X)$ is well-ordered, and has order type at
most $o(X)$.
\end{prop}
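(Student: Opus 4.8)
The plan is to treat the two assertions in turn, and throughout I would lean on the standard characterization of a well partial order: a partial order is a well partial order exactly when every infinite sequence $x_1, x_2, \ldots$ admits indices $i<j$ with $x_i \le x_j$ (equivalently, it has neither an infinite strictly descending chain nor an infinite antichain). I read ``monotonic'' as order-preserving, $x \le y \Rightarrow f(x) \le f(y)$, which is what makes the statement go through.

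For the well-ordering claim: since $f(X) \subseteq S$ and $S$ is totally ordered, $f(X)$ is automatically totally ordered, so it suffices to rule out an infinite strictly descending chain. Suppose $f(x_1) > f(x_2) > \cdots$ were such a chain; lifting to a sequence $(x_n)$ in $X$ with $f(x_n)$ the $n$-th term, the well partial order property of $X$ supplies indices $i<j$ with $x_i \le x_j$, whence monotonicity gives $f(x_i) \le f(x_j)$, contradicting the strict descent. Hence $f(X)$ is well-ordered.

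For the order-type bound, the idea is to manufacture one linear extension of $X$ whose order type dominates that of $f(X)$; since $o(X)$ is by definition the maximum of the order types of linear extensions of $X$, this finishes the argument. I would define a relation $\preceq$ on $X$ lexicographically: compare $x$ and $y$ first by $f(x)$ versus $f(y)$ in $S$, and break ties (the case $f(x)=f(y)$) by a fixed well-ordering of the fiber $f^{-1}(f(x))$. Each fiber inherits a well partial order as a subset of $X$, and by the earlier remark that any total extension of a well partial order is a well-order, it admits such a well-ordering, which I would moreover choose to extend the partial order restricted to that fiber. One then checks that $\preceq$ is a linear extension of the partial order on $X$: if $x <_X y$ then $f(x) \le f(y)$, and in the equality case the chosen fiber well-ordering still places $x$ before $y$. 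Finally $\preceq$ is itself a well-order, being the ordinal sum of the well-ordered fibers indexed over the well-ordered set $f(X)$.

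To compare order types I would use an explicit embedding: send each $s \in f(X)$ to the $\preceq$-least element of its block $f^{-1}(s)$, which exists since each block is well-ordered. Because distinct blocks are $\preceq$-separated — every element of a lower block precedes every element of a higher block — this map is a strictly increasing embedding of $f(X)$ into $(X,\preceq)$, so the order type of $f(X)$ is at most the order type of $(X,\preceq)$, which is at most $o(X)$. The step I expect to require the most care, and the crux of the whole argument, is verifying that $\preceq$ is genuinely a linear extension of the given partial order on $X$ — so that it legitimately counts toward $o(X)$ — and this is exactly where choosing the per-fiber well-orderings to refine the induced partial order is essential; the well-ordering claim and the final embedding inequality are comparatively routine.
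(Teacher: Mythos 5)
Your proof is correct and takes essentially the same route as the paper's: both bound the order type of $f(X)$ by constructing a lexicographic linear extension of $X$, ordered first by $f$-value with ties broken by a well-ordering, which by definition has order type at most $o(X)$. Your choices of per-fiber tie-breaking well-orders and the explicit embedding of $f(X)$ via $\preceq$-least elements of fibers are only cosmetic variants of the paper's single global extension $\preceq$ and its appeal to monotonicity of $f$ on the resulting well-ordered domain.
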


\begin{proof}
Pick a well-ordering extending the ordering $\leq$ on $X$; call it $\preceq$.
Define another total ordering on $X$, call it $\leq'$, by $a <' b$ if either
$f(a) < f(b)$ or $f(a)=f(b)$ and $a\prec b$.  Observe that $\leq'$ is an
extension of $\leq$ as $f$ is monotonic, so it is a well-ordering and has order
type at most $o(X)$.  Since $f$ is clearly also monotonic when we instead use
the ordering $\leq'$ on the domain, its image is therefore also well-ordered
and of order type at most $o(X)$.
\end{proof}

Note in particular that if $X$ is the union of $X_1,\ldots,X_n$, then $o(X)\le
o(X_1)\oplus\ldots\oplus o(X_n)$ as $X$ is a monotonic image of
$X_1\amalg\ldots\amalg X_n$.  So we have:

\begin{prop}
\label{cutandpaste}
We have:
\begin{enumerate}
\item If $S$ is a well-ordered set and $S=S_1\cup\ldots\cup S_n$, and $S_1$
through $S_n$ all have order type less than $\omega^k$, then so does $S$.
\item If $S$ is a well-ordered set of order type $\omega^k$ and
$S=S_1\cup\ldots\cup S_n$, then at least one of $S_1$ through $S_n$ also has
order type $\omega^k$.
\end{enumerate}
\end{prop}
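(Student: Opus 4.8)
The plan is to reduce both parts to the single inequality recorded just before the proposition: whenever a well partial order $X$ is the union of subsets $X_1,\ldots,X_n$, one has $o(X)\le o(X_1)\oplus\ldots\oplus o(X_n)$, because $X$ is a monotonic image of the disjoint union $X_1\amalg\ldots\amalg X_n$ (this is Proposition~\ref{image} combined with Theorem~\ref{naturalops}). I would apply this with $X=S$ and each $X_i=S_i$ carrying the order induced from $S$; recalling that a subset of a well-ordered set is again well-ordered, this gives the bound $o(S)\le o(S_1)\oplus\ldots\oplus o(S_n)$ in both parts, and everything else is extracting consequences of it.

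For part (1), I would first establish that $\omega^k$ is closed under the natural sum, i.e.\ that a natural sum of finitely many ordinals each below $\omega^k$ is again below $\omega^k$. This is immediate from the Cantor-normal-form description of $\oplus$ given in the definition of natural sum: an ordinal is less than $\omega^k$ exactly when every exponent appearing in its Cantor normal form is at most $k-1$, and forming the natural sum merely adds the corresponding coefficients without introducing any larger exponent. Hence if each $o(S_i)<\omega^k$, then $o(S_1)\oplus\ldots\oplus o(S_n)<\omega^k$, and the displayed inequality forces $o(S)<\omega^k$.

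Part (2) then follows by contraposition. Since each $S_i$ is a subset of the well-ordered set $S$, it is itself well-ordered with $o(S_i)\le o(S)=\omega^k$. If no $S_i$ had order type $\omega^k$, then every $o(S_i)$ would be strictly less than $\omega^k$, and part (1) would yield $o(S)<\omega^k$, contradicting $o(S)=\omega^k$. Therefore at least one $S_i$ must have order type exactly $\omega^k$.

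The only genuine content is the closure of $\omega^k$ under natural sum, and even that is a routine Cantor-normal-form computation, so I do not expect a serious obstacle. The two points that require care are invoking the \emph{natural} sum bound $o(S)\le\bigoplus_i o(S_i)$ rather than ordinary ordinal addition (which would not respect the bound, since it is not commutative and can be strictly smaller in the wrong direction), and observing that the $S_i$, being subsets of $S$, are automatically well-ordered, so that the order types $o(S_i)$ are defined and bounded above by $o(S)$.
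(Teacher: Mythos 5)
Your proof is correct and follows essentially the same route as the paper: both parts rest on the bound $o(S)\le o(S_1)\oplus\ldots\oplus o(S_n)$ noted just before the proposition, together with closure of $\omega^k$ under natural sum, with part (2) obtained by contraposition and the subset bound $o(S_i)\le\omega^k$. Your Cantor-normal-form justification of the closure fact simply spells out a step the paper takes for granted.
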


\begin{proof}
For (1), observe that the order type of $S$ is at most the natural sum of those
of $S_1,\ldots,S_n$, and the natural sum of ordinals less than $\omega^k$ is
again less than $\omega^k$.

For (2), by (1), if $S_1,\ldots, S_k$ all had order type less than $\omega^k$,
so would $S$; so at least one has order type at least $\omega^k$, and it
necessarily also has order type at most $\omega^k$, being a subset of $S$.
\end{proof}

For the proof of the main result we  will  also need some facts about 
well-ordered sets sitting inside the real numbers.
In particular, we need results about  closures and limit points of
such sets, with the ambient space carrying the order topology.  Since we have
not found all the following results in the literature, we supply proofs.

\begin{prop}
\label{closure}
Let $X$ be a totally ordered set, and let $S$ be a well-ordered subset of order
type $\alpha$.  Then $\overline{S}$ is also well-ordered, and has order type
either $\alpha$ or $\alpha+1$.  If $\alpha=\gamma+k$ where $\gamma$ is a limit
ordinal and $k$ is finite, then $\overline{S}$ has order type $\alpha+1$ if and
only if the initial segment of $S$ of order type $\gamma$ has a supremum in $X$
which is not in $S$.
\end{prop}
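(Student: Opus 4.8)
The plan is to first pin down exactly which points $\overline{S}$ adds to $S$, and then assemble the order type by a rank computation. The key structural observation is that, since $S$ is well-ordered, it has no infinite descending chain, so no point of $X$ can be a limit of $S$ \emph{from above}: an $x$ with elements of $S$ descending to it would yield an infinite descending sequence in $S$. Hence every $x\in\overline{S}\setminus S$ is a limit from below, i.e. $x=\sup\{s\in S:s<x\}$, and the initial segment $\{s\in S:s<x\}$ has no greatest element, so its order type is a limit ordinal $\rho\le\alpha$. Conversely, if the initial segment $S_\rho$ of $S$ of (limit) order type $\rho$ has a supremum in $X$ lying outside $S$, then that supremum is a limit point of $S$. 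Thus $\overline{S}\setminus S$ is precisely the set of those suprema-not-in-$S$ of the limit-length initial segments of $S$, and distinct added points correspond to distinct limit ordinals $\rho$, since each is recovered as $\sup S_\rho$.

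Next I would show $\overline{S}$ is itself well-ordered, by an interleaving argument. Given a hypothetical infinite descending chain $y_0>y_1>\cdots$ in $\overline{S}$, between each $y_{i+1}$ and $y_i$ I can find $s_i\in S$ with $y_{i+1}<s_i\le y_i$: take $s_i=y_i$ if $y_i\in S$, and otherwise use that $y_i$ is a supremum from below to produce such an $s_i$. The $s_i$ then satisfy $s_{i+1}\le y_{i+1}<s_i$, forming an infinite descending chain in $S$, a contradiction.

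For the order type I would introduce the rank function $g(x)=\text{(order type of }\{s\in S:s<x\})$ on $\overline{S}$. It is monotone, restricts on $S$ to the rank bijection $S\to\alpha$, sends each added point $\sup S_\rho$ to $\rho$, and has convex (hence interval) fibers. A short check shows that each added point $\sup S_\rho$ is the immediate predecessor in $\overline{S}$ of the $\rho$-th element of $S$ (or is a new top element when $\rho=\alpha$); consequently every fiber $g^{-1}(\rho)$ has size $1$, except that it has size $2$ exactly when $\rho$ is a limit ordinal for which $S_\rho$ has a supremum in $X\setminus S$. Writing $\overline{S}$ as the ordered concatenation of its fibers then expresses its order type as an ordinal sum $\sum_\rho |g^{-1}(\rho)|$ of $1$'s and $2$'s.

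The crux of the proposition is then the ordinal-arithmetic evaluation of this sum, which is also where the only real delicacy lies. Writing $\alpha=\gamma+k$ with $\gamma$ a limit ordinal and $k$ finite, I would show that the partial sum over all ranks $\rho<\gamma$ collapses back to $\gamma$: each partial sum over $\rho<\sigma$ lies between $\sigma$ and $2\cdot\sigma$ (i.e.\ $\sigma$ copies of $2$), and for $\sigma<\gamma$ one has $2\cdot\sigma<\gamma$, so the supremum of the partial sums is exactly $\gamma$. This absorption estimate is the main obstacle, as it is precisely what prevents the possibly infinitely many interior added points from accumulating to change the type. The remaining finite block of ranks $\gamma,\gamma+1,\ldots,\gamma+k-1$ contributes $1$ at each successor rank and $1$ or $2$ at the single limit rank $\gamma$, so the total equals $\alpha$ or $\alpha+1$ according to whether $S_\gamma$ has acquired a new supremum; the case $k=0$ is identical, with the extra point instead appearing as the top fiber $g^{-1}(\gamma)=g^{-1}(\alpha)$. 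This yields both the general dichotomy (type $\alpha$ or $\alpha+1$) and the stated criterion for when $\alpha+1$ occurs.
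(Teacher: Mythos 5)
Your proof is correct, but it takes a genuinely different route from the paper. The paper proceeds by transfinite induction on $\alpha$, splitting into the zero, successor, and limit cases: in the successor case it strips off the maximum and tracks whether it lies in the closure of the rest, and in the limit case it writes the non-upper-bound part of $\overline{S}$ as an increasing union of closures of proper initial segments, each an initial segment of the next, invoking the inductive hypothesis to bound each one. You instead argue globally and non-inductively: you first characterize $\overline{S}\setminus S$ exactly (using well-ordering to rule out limits from above, so every added point is the supremum of a limit-length initial segment $S_\rho$ sitting immediately below the $\rho$-th element of $S$), prove well-ordering of $\overline{S}$ by interleaving a hypothetical descending chain with one in $S$, and then compute the order type as an ordinal concatenation $\sum_\rho \lvert g^{-1}(\rho)\rvert$ of fibers of size $1$ or $2$, with the key absorption estimate $2\cdot\sigma<\gamma$ for $\sigma<\gamma$ showing the interior doubled fibers collapse back to $\gamma$. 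Your route buys more: it gives an explicit structural description of the closure, and in particular recovers the paper's Corollary~\ref{limitvsclos} (that the $\beta$-th element of $\overline{S}$ at a limit $\beta$ is the supremum of the first $\beta$ elements of $S$) as an immediate byproduct, whereas the paper derives that separately via Proposition~\ref{initseg}; the paper's induction, in exchange, avoids all ordinal-arithmetic bookkeeping by hiding the accumulation issues in the inductive hypothesis. Two cosmetic points: the exclusion of limits from above is cleaner phrased via the least element of $\{s\in S: s>x\}$ (a nonempty subset of a well-ordered set) than via descending sequences, since the order topology need not be first countable; and one should note the rank of an added point is a \emph{nonzero} limit ordinal, the empty initial segment having no supremum to contribute.
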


\begin{proof}
We induct on $\alpha$.  If $\alpha=0$, $S$ is empty and thus so is
$\overline{S}$.

If $\alpha=\beta+1$, say $x$ is the maximum element of $S$ and
$T=S\setminus \{ x \}$. Then $\overline{S}=\overline{T} \cup \{ x \}$, and $x$
is the maximum element of $\overline{S}$.  If $x\in\overline{T}$, then
$\overline{S}=\overline{T}$; otherwise its order type is $1$ greater.  So as
$\overline{T}$ has order type either $\beta$ or $\beta+1$ by the inductive
hypothesis, $\overline{S}$ has order type $\beta$, $\beta+1=\alpha$, or
$\beta+2=\alpha+1$.  Of course, the first of these is impossible, as its order
type must be at least $\alpha$, since it contains $S$, so the order type is
either $\alpha$ or $\alpha+1$.

Furthermore, if $\beta=\gamma+k$ where $\gamma$ is a limit ordinal, we can let
$R$ be the initial segment of $T$ (equivalently, of $S$) of order type $\gamma$.
Then by the inductive hypothesis, $\overline{T}$ has order type $\beta+1$ if and
only if $R$ has a supremum in $X$ which is not in $T$.  In the case where
$x\notin \overline{T}$, then $x\notin \overline{R}$ and so $x$ cannot be a
supremum of $R$ in $X$.  Hence, in this case, $\overline{T}$ has order type
$\beta+1$ if and only if $R$ has a supremum in $X$ which is not in $S$, and so
$\overline{S}$ has order type $\beta+2=\alpha+1$ if and only if $R$ has a
supremum in $X$ which is not in $S$.

In the case where $x\in\overline{T}$, it must be that $x$ is a supremum of $T$
in $X$.  Since $x$ is not itself in $T$, this requires that $\beta$ be a limit
ordinal, and hence that $\beta=\gamma$, i.e.~$T=R$, since $\gamma$ is the
largest limit ordinal smaller than $S$.  So $R$ has a supremum which is not in
$T$, namely, $x$; and so by the inductive hypothesis $\overline{T}$ has order
type $\beta+1$.  As $\overline{S}=\overline{T}$ in this case, it too has order
type $\beta+1=\alpha$.  Furthermore, $R$ has a supremum, $x$, but this supremum
is in $S$; thus the theorem is true in this case.

Finally we have the case where $\alpha$ is a limit ordinal.  If
$x\in\overline{S}$, either $x$ is an upper bound of $S$ or it is not; we will
first consider $R$, the subset of $\overline{S}$ consisting of those elements
which are not upper bounds of $S$.  For any $x\in R$, there is some $y\in
S$ with $y>x$, and so $x\in (-\infty,y)\cap\overline{S}$.  Since the former is
an open set, this means $x\in \overline{S\cap(-\infty,y)}$.  As $S\cap
(-\infty,y)$ is a proper initial segment of $S$, by the inductive hypothesis,
its closure is well-ordered.  Note that for varying $y$, the sets
$\overline{S\cap(-\infty,y)}$ form a chain under inclusion of well-ordered sets,
with smaller ones being initial segments of larger ones.  So as $R$ is the union
of these, it is well-ordered, and its order type is equal to their supremum.
Now clearly the order type of $R$ is at least $\alpha$, since $R$ includes $S$;
and by the inductive hypothesis, it is at most $\lim_{\beta<\alpha} (\beta+1) =
\alpha$.  So $R$ has order type $\alpha$.

This leaves the question of elements of $\overline{S}$ that are upper bounds of
$S$ (and hence $R$).  The only way such an element can exist is if it is the
supremum of $S$.  Hence, if $S$ has a supremum in $X$, and this supremum is not
already in $S$, then $\overline{S}$ has order type $\alpha+1$, and otherwise it
has order type $\alpha$.
\end{proof}

\begin{prop}
\label{initseg}
Suppose $X$ is a totally ordered set, $S$ a subset of $X$, and $T$ an initial
segment of $S$.  Then $\overline{T}$ is an intial segment of $\overline{S}$.
\end{prop}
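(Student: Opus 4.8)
The plan is to verify directly the two defining properties of an initial segment: the containment $\overline{T}\subseteq\overline{S}$, and the fact that $\overline{T}$ is downward closed within $\overline{S}$. The first is immediate, since $T\subseteq S$ implies $\overline{T}\subseteq\overline{S}$ by monotonicity of closure. So all the work goes into showing that if $x\in\overline{T}$, $y\in\overline{S}$, and $y\le x$, then $y\in\overline{T}$.

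The key preliminary step I would isolate is the following auxiliary fact: \emph{if $s\in S$ satisfies $s<x$ for some $x\in\overline{T}$, then already $s\in T$.} To see this, note that $(s,\infty)$ is an open set of the order topology containing $x$; since $x\in\overline{T}$, this neighborhood meets $T$, say in some $t$ with $t>s$. Then $s\in S$, $t\in T$, and $s<t$, so because $T$ is an initial segment of $S$ we conclude $s\in T$. This little lemma is what converts topological information about $\overline{T}$ into membership in the set $T$ itself.

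With this in hand, the main argument runs as follows. Given $x\in\overline{T}$, $y\in\overline{S}$ with $y\le x$, I would split on whether $y=x$ or $y<x$. If $y=x$ then $y\in\overline{T}$ trivially. If $y<x$, let $V$ be an arbitrary open neighborhood of $y$; I claim $V$ meets $T$. The point is that $V\cap(-\infty,x)$ is again an open neighborhood of $y$ (this is exactly where $y<x$ is used, to guarantee $y\in(-\infty,x)$). Since $y\in\overline{S}$, this neighborhood contains some $s\in S$, and by construction $s<x$. The auxiliary lemma then yields $s\in T$, so $s\in V\cap T$. As $V$ was arbitrary, $y\in\overline{T}$, completing the proof.

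The step I expect to be the only real obstacle is the worry that an arbitrary neighborhood $V$ of $y$ might witness points of $S$ only \emph{above} $x$, in which case the initial-segment property of $T$ gives no leverage. The remedy is precisely the intersection with the open ray $(-\infty,x)$, which forces the chosen witness $s$ to lie strictly below the point $x\in\overline{T}$; once $s<x$, the lemma applies. Everything else is routine manipulation of basic open sets, and the argument uses nothing about $X$ beyond its being a totally ordered set with the order topology.
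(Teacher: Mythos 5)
Your proof is correct and is essentially the paper's argument: both truncate an arbitrary neighborhood of $y$ by an open ray so that the witness from $S$ is forced below a point tied to $T$, then invoke the initial-segment property of $T$ in $S$. The only difference is bookkeeping --- the paper fixes a single $t\in T$ with $t>y$ up front (using that $(y,\infty)$ is a neighborhood of $x\in\overline{T}$) and cuts every neighborhood by $(-\infty,t)$, whereas you cut by $(-\infty,x)$ and re-extract such a $t$ for each witness $s$ inside your auxiliary lemma.
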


\begin{proof}
Suppose $x\in \overline{T}$, $y\in\overline{S}$, and $y<x$; we want to show
$y\in \overline{T}$.  The set $(y,\infty)$ is an open subset of $X$ and
contains $x\in\overline{T}$, thus it also contains some $t\in \overline{T}$.
That is to say, there is some $t\in T$ with $t>y$.

Now say $U$ is any open neighborhood of $y$; then $U\cap (-\infty,t)$ is again
an open neighborhood of $y$, and since $y\in \overline{S}$, there must exist
some $s\in S\cap U\cap(-\infty,t)$.  But then $s\in S$, $s<t$, and $t\in T$, so
$s\in T$ as well as we assumed that $T$ was an initial segment of $S$.  Thus
each neighborhood $U$ of $y$ contains some element of $T$, that is to say, $y\in
\overline{T}$.
\end{proof}

\begin{cor}
\label{limitvsclos}
Let $X$ be a totally ordered set with the least upper bound property, and $S$ a
well-ordered subset of $X$ of order type $\alpha$.  Then if $\beta<\alpha$ is a
limit ordinal, the $\beta$'th element of $\overline{S}$ is the supremum (limit)
of the initial $\beta$ elements of $S$.
\end{cor}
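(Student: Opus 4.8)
The plan is to reduce the statement to a computation about a single initial segment and then invoke Propositions~\ref{closure} and \ref{initseg}. Let $T$ be the (unique) initial segment of $S$ of order type $\beta$, i.e.\ the first $\beta$ elements of $S$. Since $\beta<\alpha$, there is at least one element of $S$ lying strictly above every element of $T$, so $T$ is bounded above in $X$. Using the least upper bound property of $X$, set $s=\sup T$ in $X$. Because $\beta$ is a limit ordinal, $T$ has no largest element, and hence $s$ cannot belong to $T$; thus $s$ is a supremum of $T$ in $X$ that does not lie in $T$.

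Next I would apply Proposition~\ref{closure} to the well-ordered set $T$, which has order type $\beta=\gamma+k$ with $\gamma=\beta$ a limit ordinal and finite part $k=0$. The criterion in that proposition says $\overline{T}$ has order type $\beta+1$ exactly when the initial segment of $T$ of order type $\gamma=\beta$ -- which is all of $T$ -- has a supremum in $X$ not lying in $T$. We have just verified precisely this condition, so $\overline{T}$ has order type $\beta+1$ and therefore possesses a maximum element. That maximum is an upper bound of $T$ belonging to $\overline{T}$; since every point of $\overline{T}$ is a limit of points of $T$ and hence is $\le s$, while $s$ itself lies in $\overline{T}$, the maximum of $\overline{T}$ is exactly $s$. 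Thus the element of $\overline{T}$ at position $\beta$ (in the $0$-indexed enumeration of an order type $\beta+1$) is $s=\sup T$.

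Finally, by Proposition~\ref{initseg}, $\overline{T}$ is an initial segment of $\overline{S}$. An initial segment of order type $\beta+1$ is precisely the set of the first $\beta+1$ elements of $\overline{S}$, so the $\beta$-th element of $\overline{S}$ agrees with the $\beta$-th element of $\overline{T}$, which is its maximum $s=\sup T$. Since $T$ is by definition the initial $\beta$ elements of $S$, this is exactly the assertion of the corollary.

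The main obstacle is the ordinal bookkeeping rather than any topological difficulty: one must be careful that ``the $\beta$-th element'' of $\overline{S}$ really is the top element of the order-type-$(\beta+1)$ initial segment $\overline{T}$, and that this top element is genuinely the supremum of $T$ and not merely some upper bound sitting in the closure. Once the supremum is identified with the maximum of $\overline{T}$ via the $\le s$ argument, the remaining pieces follow immediately from the two preceding propositions.
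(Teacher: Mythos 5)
Your proposal is correct and takes essentially the same route as the paper's own proof: both pass to the initial segment $T$ of order type $\beta$, use the least upper bound property and the limit-ordinal hypothesis to produce $s=\sup T\notin T$, invoke Proposition~\ref{closure} to conclude $\overline{T}$ has order type $\beta+1$ with final element $s$, and then transfer to $\overline{S}$ via Proposition~\ref{initseg}. The only difference is cosmetic: you spell out why $s$ is the maximum of $\overline{T}$ (via $\overline{T}\subseteq(-\infty,s]$ and the upper-bound argument), a point the paper passes over with ``$s$ is clearly its final element.''
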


\begin{proof}
Let $T$ be the intial segment of $S$ of order type $\beta$.  Since
$\beta<\alpha$, $T$ is bounded above in $S$, and thus in $X$, and thus it has a
supremum $s$.  This supremum $s$ is not in $T$ as $T$ has order type $\beta$, a
limit ordinal, and thus has no maximum.  So $\overline{T}$, by
Proposition~\ref{closure}, has order type $\beta+1$, and $s$ is clearly its
final element.  So by Proposition~\ref{initseg}, it is the $\beta$'th element of
$\overline{S}$ as well, and by definition it is the supremum of the initial
$\beta$ elements of $S$.
\end{proof}

\begin{prop}
\label{limpts1}
If $S$ is a well-ordered set of order type $\alpha < \omega^{n+1}$ with $n$
finite, then $S'$, the set of limit points of $S$ (in the order topology) has
order type strictly less than $\omega^n$.
\end{prop}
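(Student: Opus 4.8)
The statement is a Cantor--Bendixson-type bound: passing to the derived set $S'$ should ``divide the order type by $\omega$''. The plan is to map $S'$ order-preservingly into the set of ``limit positions'' of $S$, and then bound the order type of that image. Throughout, $S$ sits inside some totally ordered $X$ carrying the order topology.

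First I would pin down the shape of the limit points of a \emph{well-ordered} set. I claim every limit point $x$ of $S$ is approached only from below, i.e.\ for each $a<x$ the interval $(a,x)$ meets $S$. Indeed, if some $(a,x)$ missed $S$, then since every neighborhood of $x$ must still meet $S\setminus\{x\}$, the set $S$ would have to accumulate at $x$ from above; but then one could extract points $s_1>s_2>\cdots$ with $x<s_{i+1}<s_i$, an infinite descending chain, contradicting well-foundedness. Hence $x=\sup\{s\in S:s<x\}$, this initial segment has no greatest element, and so its order type $\lambda_x$ is a nonzero limit ordinal with $\lambda_x\le\alpha$.

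Next I would analyze the map $x\mapsto\lambda_x$. If $x<x'$ are limit points, then since $x'$ is the (unattained) supremum of its predecessors there is some $s\in S$ with $x<s<x'$, so $\{s\in S:s<x\}$ is a \emph{proper} initial segment of $\{s\in S:s<x'\}$ and thus $\lambda_x<\lambda_{x'}$. Therefore $x\mapsto\lambda_x$ is strictly increasing and $S'$ is order-isomorphic to its image. Writing each limit ordinal as $\lambda_x=\omega\mu_x$, the assignment $x\mapsto\mu_x$ remains strictly increasing, and from $\omega\mu_x=\lambda_x\le\alpha<\omega^{n+1}=\omega\cdot\omega^n$ I get $\mu_x<\omega^n$. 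It remains to bound the $\mu_x$ strictly below $\omega^n$: if they were cofinal in $\omega^n$, i.e.\ had supremum $\omega^n$, then by continuity of $\mu\mapsto\omega\mu$ I would get $\sup_x\omega\mu_x=\omega\cdot\omega^n=\omega^{n+1}\le\alpha$, contradicting $\alpha<\omega^{n+1}$. So there is $\sigma<\omega^n$ with every $\mu_x\le\sigma$, whence the order type of $S'$ is at most that of $[0,\sigma]$, namely $\sigma+1$, which is $<\omega^n$ because $\omega^n$ is a limit ordinal (the case $n=0$ being trivial, as then $S$ is finite and $S'=\emptyset$).

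The main obstacle I anticipate is precisely the interplay of the two delicate points above: the ``only from below'' lemma (which is what forces limit points to correspond to limit ordinals, and is where well-foundedness is essential), and then squeezing out the \emph{strict} inequality. A naive count of limit positions only yields order type $\le\omega^n$; obtaining $<\omega^n$ is exactly where the hypothesis $\alpha<\omega^{n+1}$ (rather than $\le\omega^{n+1}$) is used, via the boundedness argument. I would not expect to need an induction on $n$ for this direct argument, though one could alternatively prove it by induction after splitting $\alpha$ according to its Cantor normal form.
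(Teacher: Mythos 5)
Your proof is correct, and it in fact establishes something slightly stronger than what the paper proves. The paper reads ``in the order topology'' intrinsically: since the statement does not depend on any ambient space, it simply assumes $S$ \emph{is} an ordinal $\alpha$, observes that $S'$ is then exactly the set of limit ordinals below $\alpha$, writes $\alpha\le\omega^n k$ (possible since $\alpha<\omega^{n+1}$), and counts those limit ordinals explicitly: $\gamma$ is a limit ordinal iff $\gamma=\omega\gamma'$ with $\gamma'>0$, and $\omega\gamma'<\omega^n k$ iff $\gamma'<\omega^{n-1}k$, so $S'$ has order type $\omega^{n-1}k-1<\omega^n$, with no induction and no topology beyond that observation. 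You instead keep an ambient totally ordered $X$, prove the approached-only-from-below lemma (this is where well-foundedness enters for you, playing the role that ``limit points $=$ limit ordinals'' plays in the paper), and replace the explicit count with the strictly increasing assignment $x\mapsto\mu_x$ together with a cofinality/continuity argument to extract strictness. The kernel is the same in both proofs---limit points correspond to limit ordinals $\omega\mu$, and dividing by $\omega$ drops $\omega^{n+1}$ to $\omega^n$---but your ambient formulation is genuinely more general: the paper only reaches the ambient-space consequence afterwards, in Proposition~\ref{limpts2}, by passing to $\overline{S}$ via Proposition~\ref{closure} and comparing the subspace topology with the coarser order topology, and your version would let one shortcut part of that derivation. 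The price is the extra bookkeeping your route requires (the from-below lemma with its endpoint corner cases, and the cofinality step), where the paper's bound $\omega^{n-1}k-1$ is immediate from Cantor normal form. One small simplification: your separate treatment of $n=0$ is unnecessary, since your own machinery already shows that any limit point would produce a nonzero limit ordinal $\lambda_x\le\alpha<\omega$, which does not exist.
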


\begin{proof}
Since we are considering $S$ purely as a totally-ordered set and not embedded in
anything else, we may assume it is an ordinal.  Let $\beta$ be the order type of
$S'$.  The elements of $S'$ consist of the limit ordinals less than $\alpha$.
If $n=0$, then $\alpha$ is finite and so $\beta=0<\omega^0$.

Otherwise, $\alpha<\omega^{n+1}$ so say $\alpha \le \omega^n k$.  An ordinal
$\gamma$ is a limit ordinal if and only if it can be written as $\omega\gamma'$
for some $\gamma'>0$.  Since, assuming $n>0$, $\omega\gamma'<\omega^n k$ if and
only if $\gamma'<\omega^{n-1} k$, the order type of the set of limit ordinals
less than $\omega^n k$ is easily seen to be $\omega^{n-1}k-1$ (where the $1$ is
subtracted off the beginning; this only makes a difference if $n=1$).  So the
order type of $\beta$ is at most $\omega^{n-1}k-1<\omega^n$.
\end{proof}

It is not too hard to write down a general formula for the order type of $S'$ in
terms of the order type of $S$ (even without the restriction that
$\alpha<\omega^\omega$), but we will not need such detail here.  See 
\cite[Theorem 8.6.6]{semadeni} for more on this.

\begin{prop}
\label{limpts2}
Let $T$ be a totally-ordered set and $S$ a well-ordered subset.  If $S'$ (in the
order topology on $T$) has order type at least $\omega^n$ with $n$ finite, then
$S$ has order type at least $\omega^{n+1}$.
\end{prop}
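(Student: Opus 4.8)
The plan is to prove the contrapositive: if $S$ has order type $\alpha<\omega^{n+1}$, then $S'$ has order type $<\omega^n$. Since Proposition~\ref{limpts1} already performs exactly this computation for the \emph{internal} derived set of an ordinal, the real work is to show that the ambient limit points of $S$ (taken in the order topology on $T$) are no more numerous than the limit ordinals below $\alpha$. Concretely, I would build a strictly order-preserving map from $S'$ into the set of nonzero limit ordinals $\le\alpha$, and then invoke Proposition~\ref{limpts1}.

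First I would observe that the well-ordering of $S$ forbids limit points ``from above''. If some $x$ were a limit of $S$ from above, then $(x,y)\cap S\neq\emptyset$ for every $y>x$; choosing $y_1>x$, then $s_1\in(x,y_1)\cap S$, then $s_2\in(x,s_1)\cap S$, and so on, produces an infinite strictly decreasing sequence in $S$, contradicting well-foundedness. Hence for every $x$ there is some $y_0>x$ with $(x,y_0)\cap S=\emptyset$. It follows that each $x\in S'$ is in fact a limit of $S$ from below: every neighborhood of $x$ meets $S\setminus\{x\}$, yet no point of that intersection can lie in $(x,y_0)$, so every interval $(w,x)$ with $w<x$ must meet $S$.

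Next I would attach to each $x\in S'$ its initial segment $S_x:=S\cap(-\infty,x)$ and read off the order type $\lambda_x$ of $S_x$. By the previous paragraph $S_x$ is nonempty, and it has no largest element: if $m$ were a maximum of $S_x$, then $(m,x)\cap S=\emptyset$, and together with $(x,y_0)\cap S=\emptyset$ this would make $(m,y_0)$ a neighborhood of $x$ meeting $S$ only possibly in $x$ itself, contradicting $x\in S'$. Therefore $\lambda_x$ is a nonzero limit ordinal. Moreover $x\mapsto\lambda_x$ is strictly increasing: if $x<x'$ are both in $S'$, then, $x'$ being a limit from below, the interval $(x,x')$ contains a point of $S$, so $S_x\subsetneq S_{x'}$ and hence $\lambda_x<\lambda_{x'}$. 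Since each $S_x$ is an initial segment of $S$, we also have $\lambda_x\le\alpha$. Thus $x\mapsto\lambda_x$ embeds $S'$ order-preservingly into the set of nonzero limit ordinals $\le\alpha$, giving $\mathrm{ot}(S')\le\mathrm{ot}(\{\text{nonzero limit ordinals}\le\alpha\})$.

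Finally, the nonzero limit ordinals $\le\alpha$ are precisely the nonzero limit ordinals strictly below $\alpha+1$, i.e.\ the internal derived set of the ordinal $\alpha+1$. If $\alpha<\omega^{n+1}$ then $\alpha+1<\omega^{n+1}$ as well, since $\omega^{n+1}$ is a limit ordinal, so Proposition~\ref{limpts1} applied to $\alpha+1$ shows this derived set has order type $<\omega^n$. Therefore $\mathrm{ot}(S')<\omega^n$, completing the contrapositive. I expect the main obstacle to be the topological bookkeeping of the second and third steps — in particular verifying that limit points occur only from below and that the associated initial segments genuinely have limit order type — whereas once the embedding is in place the ordinal arithmetic is entirely subsumed by Proposition~\ref{limpts1}.
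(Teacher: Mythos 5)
Your proof is correct, but it takes a genuinely different route from the paper. The paper argues via the closure: it bounds $\mathrm{ot}(\overline{S})$ using Proposition~\ref{closure} (closure adds at most $1$, and $\omega^{n+1}$ is a limit ordinal), notes $\overline{S}'=S'$, and then observes that the order topology on $\overline{S}$ is coarser than the subspace topology, so that Proposition~\ref{limpts1} applied internally to $\overline{S}$ bounds the ambient derived set. You instead bypass $\overline{S}$ entirely: you use well-foundedness of $S$ to rule out limit points from above, and then build an explicit order-embedding $x\mapsto\lambda_x$ of $S'$ into the nonzero limit ordinals $\le\alpha$ by taking order types of the initial segments $S\cap(-\infty,x)$, finishing with Proposition~\ref{limpts1} applied to the ordinal $\alpha+1$. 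Your approach is more self-contained --- it avoids Proposition~\ref{closure} and the somewhat delicate coarser-versus-finer topology comparison --- at the price of hands-on bookkeeping that the closure argument packages away; the paper's route is shorter given the machinery already established. One small caveat: your second step tacitly assumes points above $x$ exist when producing $y_0$ with $(x,y_0)\cap S=\emptyset$; when $x$ is the maximum of $T$ the basic neighborhoods are rays $(w,\infty)$ and the same conclusion (every $(w,x)$ meets $S$) holds trivially, and likewise a minimum of $T$ can never lie in $S'$, so these endpoint cases are harmless but deserve a sentence.
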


\begin{proof}
Suppose $S$ has order type less than $\omega^{n+1}$.  Then by
Proposition~\ref{closure}, so does $\overline{S}$.  Since $\overline{S}'=S'$, we
can just consider $\overline{S}$.  And we can consider the order topology on
$\overline{S}$ instead of the subspace topology, since the former is coarser and
thus $\overline{S}$ has more limit points under it.  But by
Proposition~\ref{limpts1}, the order type of $\overline{S}'$ in the order
topology on $\overline{S}$ is less than $\omega^n$.  Hence $\overline{S}'$ under
the subspace topology also has order type less than $\omega^n$, and hence $S'$
has order type less than $\omega^n$.  So if $S'$ has order type at least
$\omega^n$, then $S$ has order type at least $\omega^{n+1}$.
\end{proof}

\section{Well-ordering of defects}
\label{secwo}

We now begin proving 
well-ordering theorems about defects.

\begin{prop}
\label{mono}
Let $(f,C)$ be a low-defect pair; then the function $\dft_{f,C}$ is strictly
increasing in each variable.
\end{prop}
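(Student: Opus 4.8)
The plan is to fix one variable and exploit the multilinearity of $f$ established in Proposition~\ref{polystruct}. Fix an index $i$ and, regarding $f$ as a polynomial in $x_i$ (which has degree $1$ in $x_i$ by multilinearity), write $f = x_i g + h$, where $g$ and $h$ are polynomials in the remaining variables. Since $f$ has nonnegative integer coefficients, so do $g$ and $h$; and the constant term of $h$ equals the constant term of $f$, which is nonzero by Proposition~\ref{polystruct}. Consequently, upon substituting $x_j = 3^{n_j}$ for $j \neq i$, the resulting values $g_0$ and $h_0$ of $g$ and $h$ satisfy $g_0 \geq 0$ and $h_0 > 0$ (the latter because $h_0$ is at least its positive constant term).

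With the other variables held fixed, I then examine the one-variable increment. Writing $m$ for $n_i$ and using $f(\ldots,3^m,\ldots) = 3^m g_0 + h_0$, the definition of $\dft_{f,C}$ gives
\[
\dft_{f,C}(\ldots,m+1,\ldots) - \dft_{f,C}(\ldots,m,\ldots)
= 3 - 3\log_3 \frac{3^{m+1}g_0 + h_0}{3^m g_0 + h_0},
\]
where the denominator is positive since $h_0 > 0$. This quantity is positive exactly when $\frac{3^{m+1}g_0+h_0}{3^m g_0 + h_0} < 3$, and clearing denominators reduces this to $h_0 < 3 h_0$, i.e.\ to $h_0 > 0$, which holds. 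Hence $\dft_{f,C}$ strictly increases when $n_i$ is incremented, for each $i$. If one prefers to treat $n_i$ as a real variable, the identical computation with an arbitrary real step $t>0$ reduces to $(3^t-1)h_0 > 0$, and differentiating yields $\partial \dft_{f,C}/\partial n_i = 3 h_0/(3^m g_0 + h_0) > 0$.

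The only step requiring genuine attention is the positivity $h_0 > 0$, that is, the fact that the part of $f$ not involving $x_i$ does not vanish after powers of $3$ are plugged in; this is exactly secured by the nonvanishing of the constant term in Proposition~\ref{polystruct}, together with the nonnegativity of all coefficients. Everything else is a one-line algebraic manipulation, and no induction on the structure of $f$ is needed beyond the single invocation of multilinearity.
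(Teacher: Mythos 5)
Your proof is correct, and it secures the one point that genuinely needs care (the strict inequality) in a valid way; but it takes a different route from the paper's. You argue locally: fixing an index $i$, you use multilinearity to split $f = x_i g + h$, substitute powers of $3$ into the remaining variables, and reduce the one-step increment of $\dft_{f,C}$ to the inequality $3 - 3\log_3 \frac{3^{m+1}g_0+h_0}{3^m g_0+h_0} > 0$, which clears to $h_0 > 0$ --- exactly the nonvanishing constant term from Proposition~\ref{polystruct} combined with nonnegativity of coefficients. (Note your computation also quietly handles the degenerate case $g_0 = 0$, where the increment is exactly $3$.) The paper instead argues globally, in all variables at once: it forms the reverse polynomial $g(x_1,\ldots,x_r)=x_1\cdots x_r f(x_1^{-1},\ldots,x_r^{-1})$ and rewrites $\dft_{f,C}(n_1,\ldots,n_r) = C - 3\log_3 g(3^{-n_1},\ldots,3^{-n_r})$; since the coefficient of $x_1\cdots x_r$ in $g$ is the constant term of $f$ (nonzero) and all coefficients are nonnegative, $g$ is strictly increasing in each variable, and composing with the decreasing substitutions $n_i \mapsto 3^{-n_i}$ inside $-3\log_3$ gives strict increase in one line. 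Both proofs ultimately rest on the same two structural facts from Proposition~\ref{polystruct}; yours is more elementary and self-contained (a direct increment computation, no change of variables), while the paper's reformulation buys a clean closed form for $\dft_{f,C}$ as a monotone composition, a viewpoint that harmonizes with the limit computations carried out later in Proposition~\ref{wo1}.
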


\begin{proof}
Suppose $f$ has degree $r$.  We can define $g$, the reverse polynomial of $f$:
\[g(x_1,\ldots,x_r)=x_1\ldots x_r f(x_1^{-1},\ldots,x_r^{-1}).\]
So $g$ is a multilinear polynomial in $x_1, \ldots, x_r$, with the coefficient
of $\prod_{i\in S} x_i$ in $g$ being the coefficient of $\prod_{i\notin S} x_i$
in $f$.  By Proposition~\ref{polystruct}, $f$ has nonnegative coefficients, so
so does $g$; since the constant term of $f$ does not vanish, the $x_1\ldots x_r$
term of $g$ does not vanish. Hence $g$ is strictly increasing in each variable.

Then
\begin{eqnarray*}
\dft_{f,C}(n_1,\ldots,n_r)=C+3(n_1+\ldots+n_r)-3\log_3 f(3^{n_1},\ldots,3^{n_r})
\\ = C-3\log_3 \frac{f(3^{n_1},\ldots,3^{n_r})}{3^{n_1+\ldots+n_r}}
= C-3\log_3 g(3^{-n_1},\ldots,3^{-n_r})
\end{eqnarray*}
which is strictly increasing in each variable, as claimed.
\end{proof}

\begin{prop}
\label{wo1}
Let $(f,C)$ be a low-defect pair of degree $r$; then the image of $\dft_{f,C}$
is a well-ordered subset of $\mathbb{R}$, with order type $\omega^r$.
\end{prop}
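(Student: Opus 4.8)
The plan is to bound the order type of the image $S$ of $\dft_{f,C}$ from above and below separately, showing each bound is $\omega^r$. I would first dispose of the upper bound, which is essentially formal. View $\dft_{f,C}$ as a map out of the poset of $r$-tuples of nonnegative integers under the coordinatewise order. Since each coordinate is well-ordered of type $\omega$, Theorem~\ref{naturalops} shows this poset is a well partial order whose $o$-value is the natural product of $r$ copies of $\omega$, namely $\omega^r$ (using that $\omega^{a}\otimes\omega^{b}=\omega^{a+b}$ for finite $a,b$). By Proposition~\ref{mono}, $\dft_{f,C}$ is strictly increasing in each coordinate and hence monotonic, so Proposition~\ref{image} immediately gives that $S$ is well-ordered of order type at most $\omega^r$.

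The substance is the lower bound, which I would prove by induction on $r$. The base case $r=0$ is trivial: $f$ is a constant and $S$ is a single point, of order type $1=\omega^0$. For the inductive step I would apply Lemma~\ref{maxvar} to write $f=h\otimes(g\otimes x_1+c)$ with $h,g$ low-defect polynomials and $c\ge 1$ a positive integer; after relabeling the isolated variable as $x_r$ this reads $f=(h\otimes g)x_r+c\,h$, where $h\otimes g$ is a low-defect polynomial of degree $r-1$. The crucial computation is that, for fixed $n_1,\ldots,n_{r-1}$,
\[
\dft_{f,C}(n_1,\ldots,n_{r-1},n_r)=C+3(n_1+\cdots+n_{r-1})-3\log_3\bigl[(h\otimes g)(3^{n_1},\ldots,3^{n_{r-1}})+c\,h(3^{n_1},\ldots,3^{n_{r-1}})\,3^{-n_r}\bigr],
\]
so that as $n_r\to\infty$ the value strictly increases (in agreement with Proposition~\ref{mono}, and bounded above by $\dft(f,C)$ via Proposition~\ref{dftbd}) to the limit $\dft_{h\otimes g,C}(n_1,\ldots,n_{r-1})$. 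I would check that $(h\otimes g,C)$ is itself a low-defect pair of degree $r-1$: the tensor $(h\otimes g)$ is a low-defect polynomial, and the extra base complexity in $C$ can be absorbed by tensoring with a constant pair $(1,D)$, which leaves the polynomial unchanged. The inductive hypothesis then gives that the image of $\dft_{h\otimes g,C}$ is well-ordered of order type $\omega^{r-1}$.

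To finish, note that each value $\dft_{h\otimes g,C}(n_1,\ldots,n_{r-1})$ is a strictly increasing limit of honest values of $\dft_{f,C}$, hence a limit point of $S$ and an element of $S'$; and since $S'\subseteq\overline{S}$, which is well-ordered by Proposition~\ref{closure}, $S'$ is well-ordered and contains a set of order type $\omega^{r-1}$, so $\mathrm{ot}(S')\ge\omega^{r-1}$. Proposition~\ref{limpts2} (with $n=r-1$) then forces $\mathrm{ot}(S)\ge\omega^r$. Combining this with the upper bound yields order type exactly $\omega^r$.

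The hard part will be the inductive step of the lower bound, and specifically two points within it: recognizing, via Lemma~\ref{maxvar}, that sending one variable to infinity collapses $\dft_{f,C}$ to a defect function \emph{of exactly the same form} in one fewer variable (so the induction closes), and verifying that the limiting pair $(h\otimes g,C)$ is a genuine low-defect pair so that the inductive hypothesis applies. Once the limit values are identified as limit points lying in a set of order type $\omega^{r-1}$, the order-theoretic input of Proposition~\ref{limpts2} does the rest of the work automatically.
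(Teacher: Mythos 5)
Your proposal is correct and takes essentially the same route as the paper: the identical upper bound via Proposition~\ref{mono}, Proposition~\ref{image}, and Theorem~\ref{naturalops} applied to $\mathbb{Z}_{\ge 0}^r$, and the identical lower bound by induction, using Lemma~\ref{maxvar}, letting $n_r\to\infty$ to produce limit points, and closing with Proposition~\ref{limpts2}. Your one deviation---asserting that $(h\otimes g, C)$ is itself a low-defect pair by tensoring with a constant pair $(1,D)$---silently assumes $C\ge\cpx{h\otimes g}+\cpx{1}$ without verification (and is unnecessary anyway): the paper sidesteps this by writing the limit as $C-\cpx{g\otimes h}+\dft_{g\otimes h}(n_1,\ldots,n_{r-1})$, i.e., a translate of the image of the genuine pair $(g\otimes h,\cpx{g\otimes h})$, and translation preserves order type.
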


\begin{proof}
By Proposition~\ref{mono}, $\dft_{f,C}$ is a monotonic function from
$\mathbb{Z}_{\ge0}^r$ to $\mathbb{R}$, and $\mathbb{R}$ is totally ordered, so
by Proposition~\ref{image} and Theorem~\ref{naturalops} its image is a
well-ordered set of order type at most $\omega^r$.

For the lower bound, we induct on $r$.  Let $S$ denote the image of
$\dft_{f,C}$. If $r=0$, $\dft_{f,C}$ is a constant and so $S$ has order type
$1=\omega^0$.  Now suppose $r\ge1$ and that this is true for $r-1$.  By
Lemma~\ref{maxvar}, we can write $f=h\otimes(g\otimes x_1+c)$ where $c$ is a
positive integer and $g$ and $h$ are low-defect polynomials.  Unpacking this
statement, if $s$ is the degree of $h$, we have
$f(x_1,\ldots,x_r)=h(x_1,\ldots,x_s)(g(x_{s+1},\ldots,x_{r-1})x_r+c)$.
Then
\begin{eqnarray*}
\dft_{f,C}(n_1,\ldots,n_r) & =& (C-\cpx{h})+\dft_{h}(n_1,\ldots,n_s) +\\
& &
3(n_{s+1}+\ldots+n_{r-1})-3\log_3(g(3^{n_{s+1}},\ldots,3^{n_{r-1}})+c3^{-n_r}).
\end{eqnarray*}
Thus,
\begin{eqnarray*}
\lim_{n_r\to\infty} \dft_{f,C}(n_1,\ldots,n_r) & =&
C-\cpx{h} + \dft_h(n_1,\ldots,n_s) +\\ & & 3(n_{s+1}+\ldots+n_{r-1}) -
3\log_3(g(3^{n_{s+1}},\ldots,3^{n_{r-1}})) \\
&=&
C-\cpx{h}-\cpx{g}+\dft_h(n_1,\ldots,n_s)+\dft_g(n_{s+1},\ldots,n_{r-1})\\
& = &
C -3\log_3(h(n_1,\ldots,n_s)g(n_{s+1},\ldots,n_{r-1})) \\
& = &
C-\cpx{g\otimes h}+\dft_{g\otimes h}(n_1,\ldots,n_{r-1}).
\end{eqnarray*}
And since $\dft_{f,C}$ is increasing in $n_r$, this means that this is in fact a
limit point of $S$.  So we see that $S'$ contains a translate of the image of
$\dft_{g\otimes h}$.  The degree of $g\otimes h$ is $r-1$, so by the inductive
hypothesis, this image has order type at least $\omega^{r-1}$.  Thus $S'$ has
order type at least $\omega^{r-1}$, and so by Proposition~\ref{limpts2}, this
means that $S$ has order type at least $\omega^r$.
\end{proof}

\begin{prop}
\label{wo2}
Let $(f,C)$ be a low-defect pair of degree $r$; then the set of $\dft(n)$ for
all $n$ $3$-represented by the augmented low-defect polynomial $\xpdd{f}$ is a
well-ordered subset of $\mathbb{R}$, with order type at least $\omega^r$ and at
most $\omega^r(\lfloor \delta(f,C) \rfloor+1)<\omega^{r+1}$.  The same is true
if $f$ is used instead of the augmented version $\xpdd{f}$.
\end{prop}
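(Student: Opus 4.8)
The plan is to pivot everything off the image of $\dft_{f,C}$, which by Proposition~\ref{wo1} is well-ordered of order type exactly $\omega^r$; call this set $T$. The two key inputs are that the actual defect of a $3$-represented number always differs from the corresponding value of $\dft_{f,C}$ by a \emph{nonnegative integer} (Proposition~\ref{dftbd}(1) for $f$, and Corollary~\ref{augdftbd} for $\xpdd{f}$), and that this integer is bounded: since $\dft_{f,C}(n_1,\ldots,n_r)\le\dft(f,C)$ by Proposition~\ref{dftbd}(2) and defects are nonnegative, the gap lies in $\{0,1,\ldots,\lfloor\dft(f,C)\rfloor\}$. I would also record at the outset that the numbers $3$-represented by $f$ are precisely those $3$-represented by $\xpdd{f}$ with last exponent $0$, so the defect set $S_0$ coming from $f$ is a subset of the defect set $S$ coming from $\xpdd{f}$. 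This containment lets me prove the upper bound for $S$ and the lower bound for $S_0$ and obtain both statements simultaneously.

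For the upper bound, I would observe that every element of $S$ has the form $\dft_{f,C}(n_1,\ldots,n_r)-j$ for some $j\in\{0,\ldots,\lfloor\dft(f,C)\rfloor\}$, so $S\subseteq\bigcup_{j=0}^{\lfloor\dft(f,C)\rfloor}(T-j)$. Each translate $T-j$ is well-ordered of order type $\omega^r$, so by the remark following Proposition~\ref{image} (which writes any finite union as a monotonic image of the corresponding $\amalg$, together with Theorem~\ref{naturalops}) the union is well-ordered of order type at most the natural sum $\omega^r\oplus\cdots\oplus\omega^r=\omega^r(\lfloor\dft(f,C)\rfloor+1)$ of $\lfloor\dft(f,C)\rfloor+1$ copies. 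Since $S$ is a subset, it is well-ordered of order type at most $\omega^r(\lfloor\dft(f,C)\rfloor+1)<\omega^{r+1}$, and the same bound passes to $S_0\subseteq S$.

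For the lower bound I would partition the domain $\Z_{\ge0}^r$ into the finitely many pieces $P_j$ on which the integer gap $\dft_{f,C}(n_1,\ldots,n_r)-\dft(f(3^{n_1},\ldots,3^{n_r}))$ equals $j$. Then $T=\bigcup_j \dft_{f,C}(P_j)$, so by Proposition~\ref{cutandpaste}(2) at least one piece $P_{j_0}$ has $\dft_{f,C}(P_{j_0})$ of order type $\omega^r$. On $P_{j_0}$ the genuine defect is exactly $\dft_{f,C}(\,\cdot\,)-j_0$, so $\{\dft(f(3^{n_1},\ldots,3^{n_r})):(n_1,\ldots,n_r)\in P_{j_0}\}$ equals the translate $\dft_{f,C}(P_{j_0})-j_0$, which again has order type $\omega^r$ since translation of $\R$ is an order isomorphism. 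As this set is contained in $S_0\subseteq S$, both $S_0$ and $S$ have order type at least $\omega^r$, completing the proof.

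I expect the main obstacle to be the lower bound, and specifically the idea of sorting the exponent tuples by the value of the integer defect-gap and then invoking Proposition~\ref{cutandpaste}(2): the crucial point is that although an individual piece of the partition could in principle collapse the order type below $\omega^r$, the pieces cannot \emph{all} do so, because their images jointly recover the full set $T$ of order type $\omega^r$. Once a piece $P_{j_0}$ of full order type is isolated, the constancy of the gap $j_0$ on it converts the true defects into a literal translate of a subset of $T$, and the result follows. The upper bound, by contrast, is essentially bookkeeping with finitely many translates of $T$.
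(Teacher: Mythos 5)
Your proposal is correct and takes essentially the same route as the paper: the upper bound comes from covering the defect set by the $\lfloor\dft(f,C)\rfloor+1$ integer translates of $T=\dft_{f,C}(\Z_{\ge 0}^r)$ (via Propositions~\ref{image} and \ref{cutandpaste} and Theorem~\ref{naturalops}), and the lower bound from splitting according to the integer gap and applying Proposition~\ref{cutandpaste}(2) to extract one piece of full order type $\omega^r$, whose defects are then a literal translate of a subset of $T$. The only difference is bookkeeping: you partition the exponent tuples by gap value while the paper partitions $T$ itself into $T_0,\ldots,T_\ell$, and you handle the unaugmented case via the containment of its defect set in that of $\xpdd{f}$ (taking the last exponent to be $0$) where the paper just says the argument is similar.
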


\begin{proof}
Let $S$ be the set of all $\dft(n)$ for all $n$ that are $3$-represented by
$\xpdd{f}$, and let $T$ be the image of $\dft_{f,C}$.  By Proposition~\ref{wo1},
$T$ is a well-ordered subset of $\mathbb{R}$, of order type $\omega^r$.  Suppose
$n=\xpdd{f}(3^{m_1},\ldots,3^{m_{r+1}})$.
Then by Corollary~\ref{augdftbd},
\[\delta(n)=\delta_{f,C}(m_1,\ldots,m_{r+1})-k\]
for some $k\ge 0$.
But $\delta_{f,C}(m_1,\ldots,m_{r+1})\le\delta(f,C)$ by Proposition~\ref{dftbd},
and since $\delta(n)\ge 0$, this implies $k\le \delta(f,C)$.  As $k$ is an
integer, this implies
\[k\in\{0,\ldots,\lfloor \delta(f,C) \rfloor\},\]
which is a finite set.  Let $\ell$ refer to the number $\lfloor \delta(f,C)
\rfloor$.

Thus, $S$ is covered by finitely many translates of $T$; more specifically, we
can partition $T$ into $T_0$ through $T_\ell$ such that
\[S= T_0 \cup (T_1 - 1) \cup \ldots \cup (T_\ell - \ell).\]
Then the $T_i$ all have order type at most $\omega^r$, and by
Proposition~\ref{cutandpaste} at least one has order type $\omega^r$.  Hence $S$
is well-ordered of order type at most $\omega^r (\lfloor \delta(f,C) \rfloor
+1)< \omega^{r+1}$ by Propositions~\ref{naturalops} and \ref{image}.  And by the
above reasoning, it also has order type at least $\omega^r$.

The proof for $f$ instead of $\xpdd{f}$ is similar.
\end{proof}

\begin{prop}
\label{rwo1}
For any $s>0$, the set $\mathscr{D}\cap[0,s)$ is a well-ordered subset of
$\mathbb{R}$ with order type at least $\omega^{\lfloor s \rfloor}$ and less than
$\omega^{\lfloor s \rfloor+1}$.
\end{prop}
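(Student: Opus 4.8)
The plan is to sandwich $\mathscr{D}\cap[0,s)$ between a lower and an upper bound, both obtained from the order-type analysis of Proposition~\ref{wo2}. Write $k=\lfloor s\rfloor$. For the \emph{upper bound} I would invoke Theorem~\ref{augmainthm} with $r=s$: it produces a finite set $\sS_s$ of low-defect pairs, each of degree at most $k$, such that every $N\in A_s$ (that is, every $N$ with $\dft(N)<s$) is $3$-represented by $\xpdd f$ for some $(f,C)\in\sS_s$. Hence every element of $\mathscr{D}\cap[0,s)$ is a defect $3$-represented by one of these finitely many augmented polynomials, so $\mathscr{D}\cap[0,s)\subseteq\bigcup_{(f,C)\in\sS_s}S_{\xpdd f}$, where $S_{\xpdd f}$ denotes the set of defects $3$-represented by $\xpdd f$.

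By Proposition~\ref{wo2} each $S_{\xpdd f}$ is well-ordered of order type $<\omega^{\deg f+1}\le\omega^{k+1}$. The finite union is a monotonic image of the disjoint union of these well-ordered sets, so by Proposition~\ref{image} it is itself well-ordered, of order type at most the natural sum of the pieces; and a natural sum of finitely many ordinals below $\omega^{k+1}$ is again below $\omega^{k+1}$ (Proposition~\ref{cutandpaste}(1)). Therefore $\mathscr{D}\cap[0,s)$, being a subset of this union, is well-ordered of order type $<\omega^{k+1}=\omega^{\lfloor s\rfloor+1}$.

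For the \emph{lower bound} I would exhibit a single degree-$k$ low-defect pair all of whose $3$-represented numbers have defect $<s$, and then read off its order type. The key observation is that defects add under $\otimes$, while the pair $(3x_1+1,4)$ — obtained by applying construction rule (3) to the constant pair $(3,3)$ — has degree $1$ and defect exactly $1$. Tensoring $k$ copies yields a low-defect pair $(f,C)$ of degree $k$ with leading coefficient $3^k$, base complexity $4k$, and $\dft(f,C)=4k-3\log_3 3^k=k$. Since $k\ge 1$ makes the inequality $\dft_{f,C}(n_1,\ldots,n_k)<\dft(f,C)$ of Proposition~\ref{dftbd}(2) strict, every number $3$-represented by $f$ has defect strictly below $k\le s$. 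Thus the set $S_f$ of its $3$-represented defects lies entirely in $[0,s)$ and is a genuine subset of $\mathscr{D}$, so $S_f\subseteq\mathscr{D}\cap[0,s)$; by the final clause of Proposition~\ref{wo2} (the version for $f$ rather than $\xpdd f$), $S_f$ has order type at least $\omega^k$, forcing $\mathscr{D}\cap[0,s)$ to have order type at least $\omega^{\lfloor s\rfloor}$.

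The edge case $k=0$, i.e.\ $0<s<1$, is handled trivially: the constant pair $(3,3)$ has defect $0$, $3$-represents only $3$, and $\dft(3)=0<s$, so $\mathscr{D}\cap[0,s)$ is nonempty and has order type at least $1=\omega^0$. I expect the only real content to be the lower bound — specifically the recognition that a high-degree low-defect polynomial can nonetheless have small defect (defect equal to its degree, by additivity of defect under $\otimes$), combined with the strictness in Proposition~\ref{dftbd}(2) that keeps every represented defect below $s$. The upper bound is then a routine packaging of Theorem~\ref{augmainthm} with Propositions~\ref{wo2}, \ref{image}, and \ref{cutandpaste}.
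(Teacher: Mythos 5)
Your proposal is correct and follows essentially the same route as the paper: the upper bound via Theorem~\ref{augmainthm}, Proposition~\ref{wo2}, and Proposition~\ref{cutandpaste} is identical, and the lower bound likewise rests on Proposition~\ref{dftbd}(2) (strictness for degree $k\ge 1$) together with the non-augmented case of Proposition~\ref{wo2}. The only difference is cosmetic: you use the tensor product $(3x_1+1)\otimes\cdots\otimes(3x_k+1)$ with base complexity $4k$ as the degree-$k$ witness of defect exactly $k$, where the paper uses the nested polynomial $(\ldots((3x_1+1)x_2+1)\ldots)x_k+1$ with $\cpx{f}\le 3+k$ and defect at most $k$ — both serve the identical role.
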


\begin{proof}
By Theorem~\ref{augmainthm}, there exists a finite set $\sS_s$ of low-defect
polynomials of degree at most $\lfloor s \rfloor$ such that each $n\in A_s$ can
be $3$-represented by $\xpdd{f}$ for some $f\in \sS_s$. By
Proposition~\ref{wo2}, for each $f\in \sS$, the set of defects of numbers
$3$-represented by $\xpdd{f}$ is a well-ordered set of order type less than
$\omega^{\lfloor s \rfloor+1}$.  Since $\mathscr{D}\cap[0,s)$ is covered by a
finite union of these, it is also well-ordered of order type less than
$\omega^{\lfloor s \rfloor +1}$ by Proposition~\ref{cutandpaste}.

For the lower bound on the order type, if $0<s<1$, observe that
$0\in\mathscr{D}\cap[0,s)$.  Otherwise, let $k=\lfloor s \rfloor$ and consider
the low-defect polynomial
\[ f = (\ldots(((3x_1+1)x_2+1)x_3+1)\ldots)x_k+1. \]
We have $\cpx{f}\le 3+k$, so $\dft(f)\le k\le s$.  And since $k\ge 1$, by
Propostion~\ref{dftbd} the set of $\dft(n)$ for $n$ that are $3$-represented by
$f$ is contained in $\mathscr{D}\cap[0,s)$; while by Proposition~\ref{wo2}, it
has order type at least $\omega^k$, proving the claim.
\end{proof}

We can thus conclude:

\begin{thm}
\label{omgomg}
The set $\mathscr{D}$ is a well-ordered subset of $\R$, of order type
$\omega^\omega$.
\end{thm}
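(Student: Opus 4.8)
The plan is to obtain Theorem~\ref{omgomg} as a short deduction from Proposition~\ref{rwo1}, by realizing $\mathscr{D}$ as the increasing union of the pieces $\mathscr{D}\cap[0,k)$ over positive integers $k$ and transferring the per-piece bounds to the whole set. First I would observe that each $\mathscr{D}\cap[0,k)$ is an initial segment of $\mathscr{D}$ under the usual ordering of $\mathbb{R}$, since $[0,k)$ is downward closed in $[0,\infty)\supseteq\mathscr{D}$. These initial segments are nested, and their union is all of $\mathscr{D}$ because every defect is a finite real number and hence lies below some integer $k$.

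For well-orderedness I would argue that $\mathscr{D}$ admits no infinite strictly decreasing sequence: any such sequence would be bounded above by its first term, hence would be contained in $\mathscr{D}\cap[0,k)$ for some $k$, contradicting the well-orderedness of that set supplied by Proposition~\ref{rwo1}. A subset of $\mathbb{R}$ with no infinite descending sequence is well-ordered, so $\mathscr{D}$ is well-ordered.

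For the order type I would invoke the standard fact that when a well-ordered set is written as an increasing union of initial segments, its order type is the supremum of the order types of those segments; this holds because the rank of each element in $\mathscr{D}$ agrees with its rank in whichever segment $\mathscr{D}\cap[0,k)$ contains it. Proposition~\ref{rwo1} gives $\omega^{k}\le\mathrm{otp}(\mathscr{D}\cap[0,k))<\omega^{k+1}$ for each $k\ge 1$. The upper bounds show every piece has order type strictly below $\omega^\omega$, so the supremum is at most $\omega^\omega$; the lower bounds $\omega^k$ already have supremum $\omega^\omega$, so the supremum is at least $\omega^\omega$. Squeezing gives $\mathrm{otp}(\mathscr{D})=\omega^\omega$.

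Since all of the genuine work—the covering of $A_s$ by augmented low-defect polynomials and the order-type estimates for their defect images—has been carried out in Proposition~\ref{rwo1}, there is no real obstacle remaining. The only point meriting care, and it is routine, is the justification that the order type of the union equals the supremum of the order types of the initial segments; everything else is a direct squeeze between the two sequences of bounds, both of which converge to $\omega^\omega$.
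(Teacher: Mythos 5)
Your proposal is correct and follows essentially the same route as the paper: the paper's proof of Theorem~\ref{omgomg} likewise deduces everything from Proposition~\ref{rwo1}, noting that every (proper) initial segment of $\mathscr{D}$ lies in some $\mathscr{D}\cap[0,k)$ and hence is well-ordered of type less than $\omega^\omega$, while the lower bounds $\omega^k$ force the order type up to $\omega^\omega$. Your write-up merely makes explicit the routine facts (no infinite descending chains; order type of an increasing union of initial segments is the supremum) that the paper leaves implicit.
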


\begin{proof}
By Proposition~\ref{rwo1}, we see that each initial segment of $\mathscr{D}$ is
well-ordered, and with order type less than $\omega^\omega$; hence $\mathscr{D}$
is well-ordered, and has order type at most $\omega^\omega$.  Also by
Proposition~\ref{rwo1}, we can find initial segments of $\mathscr{D}$ with order
type at least $\omega^n$ for any $n\in\mathbb{N}$, so $\mathscr{D}$ has order
type at least $\omega^\omega$.
\end{proof}

We have now determined the order type of $\mathscr{D}$.  However, we have not
fully determined the order types of $\mathscr{D}\cap[0,s]$ for real numbers $s$.
Of course in general determining this is complicated, but we can answer the
question when $s$ is an integer:

\begin{thm}
\label{rwo2}
For any whole number $k\ne 1$, $\mathscr{D}\cap[0,k]$ is a well-ordered subset
of $\mathbb{R}$ with order type $\omega^k$, while $\mathscr{D}\cap[0,1]$ has
order type $\omega+1$.
\end{thm}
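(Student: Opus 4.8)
The plan is to reduce the statement to computing the order types of the half-open initial segments $\mathscr{D}\cap[0,k)$, which can be pinned down exactly using Proposition~\ref{rwo1}, and then to account separately for the single integer boundary value using Theorem~\ref{oldprops}(7).

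First I would isolate the following lemma: for every integer $k\ge 1$, the order type of $\mathscr{D}\cap[0,k)$ is exactly $\omega^k$. The lower bound is immediate, since Proposition~\ref{rwo1} applied at $s=k$ gives order type at least $\omega^{\lfloor k\rfloor}=\omega^k$. For the matching upper bound, the idea is to view $\mathscr{D}\cap[0,k)$ as the increasing union of the initial segments $\mathscr{D}\cap[0,s)$ over $s<k$. Concretely, given any $x\in\mathscr{D}\cap[0,k)$, choose $s$ with $x<s<k$ and $s\in(k-1,k)$; then the set of elements of $\mathscr{D}$ strictly below $x$ is contained in $\mathscr{D}\cap[0,s)$, and since $\lfloor s\rfloor=k-1$, Proposition~\ref{rwo1} bounds its order type strictly below $\omega^{\lfloor s\rfloor+1}=\omega^k$. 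Hence every element of $\mathscr{D}\cap[0,k)$ has rank strictly below $\omega^k$, forcing the order type of the whole set to be at most $\omega^k$. Together with the lower bound this yields exactly $\omega^k$.

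With the lemma in hand, the three remaining cases are routine. For $k=0$ we have $\mathscr{D}\cap[0,0]=\{0\}$ (as $0=\dft(3)\in\mathscr{D}$), of order type $1=\omega^0$. For $k\ge 2$, Theorem~\ref{oldprops}(7) shows that the only integers occurring as defects are $0$ and $1$, so $k\notin\mathscr{D}$ and therefore $\mathscr{D}\cap[0,k]=\mathscr{D}\cap[0,k)$, whose order type is $\omega^k$ by the lemma. Finally, for $k=1$ we have $1=\dft(1)\in\mathscr{D}$, and $1$ strictly exceeds every element of $\mathscr{D}\cap[0,1)$; thus $\mathscr{D}\cap[0,1]$ is the set $\mathscr{D}\cap[0,1)$ of order type $\omega$ with a single new maximum adjoined, giving order type $\omega+1$.

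The only real content is the upper bound in the lemma, and the point I expect to matter most is that Proposition~\ref{rwo1} must be invoked at parameters $s$ just below $k$, where the floor drops to $k-1$ and the strict bound $\omega^{\lfloor s\rfloor+1}=\omega^k$ becomes available; evaluating it at $s=k$ itself would only give the useless bound $\omega^{k+1}$. The rank argument over nested initial segments is precisely what converts this strict inequality, valid for all $s<k$, into the exact value $\omega^k$ for the union. The treatment of the boundary integer is then purely bookkeeping, controlled entirely by the fact (Theorem~\ref{oldprops}(7)) that $0$ and $1$ are the only integer defects.
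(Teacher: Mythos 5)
Your proposal is correct and follows essentially the same route as the paper: lower bound from Proposition~\ref{rwo1} at $s=k$, upper bound by exhausting $\mathscr{D}\cap[0,k)$ by segments $\mathscr{D}\cap[0,s)$ with $s<k$ where the floor drops to $k-1$, then bookkeeping at the integer boundary via Theorem~\ref{oldprops}(7). Your rank-of-each-element argument is just a spelled-out version of the paper's observation that the order type of $\mathscr{D}\cap[0,k)$ is the supremum of the order types of $\mathscr{D}\cap[0,r)$ over $r<k$.
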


\begin{proof}
The order type of $\mathscr{D}\cap[0,k]$ is either the same as that of
$\mathscr{D}\cap[0,k)$, or that same order type plus $1$, depending on whether
or not $k\in \mathscr{D}$.  By Theorem~\ref{oldprops}, the only integral
elements of $\mathscr{D}$ are $0$ and $1$, so what remains is to determine the
order type of $\mathscr{D}\cap[0,k)$.  For $k=0$ this is clearly $1=\omega^0$,
making the statement true for $k=0$, so assume $k\ge 1$.

By Proposition~\ref{rwo1}, $\mathscr{D}\cap[0,k)$ is well-ordered and has order
type at least $\omega^k$.  However its order type is also equal to the supremum
of the order types of $\mathscr{D}\cap[0,r)$ for $r<k$, and by
Proposition~\ref{rwo1}, since $k$ is an integer, these are all less than
$\omega^k$.  Hence its order type is also at most $\omega^k$, and thus exactly
$\omega^k$.  Thus for $k\ge 1$, the order type of $\mathscr{D}\cap[0,k]$ is
exactly $\omega^k$, unless $k=1$, in which case it is $\omega+1$.
\end{proof}

Putting these together, we have the main theorem:
\begin{proof}[Proof of Theorem~\ref{frontpagethm}]
The first part is Theorem~\ref{omgomg}.  The second part follows from the proof
of Theorem~\ref{rwo2}, or from Theorem~\ref{rwo2} and the fact that $1$ is the
only nonzero defect which is also an integer.
\end{proof}

We will further discuss the order type of $\mathscr{D}\cap[0,s]$ when $s$ is not
an integer in a future paper \cite{seq3}.

\section{Variants of the main theorem}
\label{variants}

In this section, we prove several variants of the main theorem, all showing
$\omega^{\omega}$ well-ordering for various related sets.

We begin with proving the well ordering holds for the closure
$\overline{\mathscr{D}}$ of the defect set in $\R$.

\begin{prop} 
\label{thmbig}
The set $\overline{\mathscr{D}}$, the closure of the defect set, is
well-ordered, with order type $\omega^\omega$.  Furthermore, for an integer
$k\ge 1$, the order type of $\overline{\mathscr{D}}\cap[0,k]$ is $\omega^k+1$.
(And $k\in\overline{\mathscr{D}}$, so $k$ is the $\omega^k$'th element of
$\overline{\mathscr{D}}$).
\end{prop}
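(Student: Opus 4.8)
The plan is to bootstrap from what is already known about $\mathscr{D}$ itself, using the closure machinery of Propositions~\ref{closure} and \ref{initseg}. First I would dispose of the global claim. By Theorem~\ref{omgomg} the set $\mathscr{D}$ has order type $\omega^\omega$, while by Proposition~\ref{rwo1} every truncation $\mathscr{D}\cap[0,s)$ has order type strictly below $\omega^{\lfloor s\rfloor+1}<\omega^\omega$. Hence $\mathscr{D}$ cannot be bounded in $\R$: otherwise it would coincide with one such truncation and have order type below $\omega^\omega$, a contradiction. Being unbounded, $\mathscr{D}$ has no supremum in $\R$, so Proposition~\ref{closure} applied with $\alpha=\omega^\omega$ (a limit ordinal, so $\gamma=\omega^\omega$ and finite part $0$) forces $\overline{\mathscr{D}}$ to have order type exactly $\omega^\omega$, not $\omega^\omega+1$.

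The crux, and what I expect to be the main obstacle, is to show that for each integer $k\ge 1$ the supremum $s_0:=\sup\bigl(\mathscr{D}\cap[0,k)\bigr)$ equals $k$; this is where the sharp interplay between the lower and upper order-type bounds is used. By Theorem~\ref{rwo2} (more precisely its proof) the set $\mathscr{D}\cap[0,k)$ has order type exactly $\omega^k$. Suppose toward a contradiction that $s_0<k$. Choose $s'$ with $\max(s_0,k-1)<s'<k$, which is possible since $s_0$ and $k-1$ are both below $k$. Then $\lfloor s'\rfloor=k-1$, and since every element of $\mathscr{D}\cap[0,k)$ is at most $s_0<s'$ we get $\mathscr{D}\cap[0,s')=\mathscr{D}\cap[0,k)$. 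But Proposition~\ref{rwo1} bounds the order type of $\mathscr{D}\cap[0,s')$ strictly below $\omega^{\lfloor s'\rfloor+1}=\omega^k$, contradicting that this set has order type $\omega^k$. Hence $s_0=k$. As $\mathscr{D}\cap[0,k)$ has limit order type $\omega^k$, it has no largest element, so $k$ is a genuine limit point of $\mathscr{D}$; thus $k\in\overline{\mathscr{D}}$ (for $k=1$ this is clear anyway, since $1\in\mathscr{D}$).

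Finally I would assemble the local statement. Because $\mathscr{D}\cap[0,k)$ has order type $\omega^k$ and supremum $k\notin\mathscr{D}\cap[0,k)$, Proposition~\ref{closure} gives that $\overline{\mathscr{D}\cap[0,k)}$ has order type $\omega^k+1$, with $k$ as its maximum. It then remains to identify this set with $\overline{\mathscr{D}}\cap[0,k]$. The inclusion $\overline{\mathscr{D}\cap[0,k)}\subseteq\overline{\mathscr{D}}\cap[0,k]$ is immediate from monotonicity of closure. For the reverse inclusion, if $x\in\overline{\mathscr{D}}\cap[0,k)$ then every sufficiently small neighborhood of $x$ lies in $(-\infty,k)$ and meets $\mathscr{D}$, hence meets $\mathscr{D}\cap[0,k)$, witnessing $x\in\overline{\mathscr{D}\cap[0,k)}$; and $x=k$ already lies in $\overline{\mathscr{D}\cap[0,k)}$ by the previous paragraph. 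Thus $\overline{\mathscr{D}}\cap[0,k]=\overline{\mathscr{D}\cap[0,k)}$ has order type $\omega^k+1$, and $k$, being its maximum sitting above an initial segment of order type $\omega^k$, is the $\omega^k$'th element, as claimed. One could even recover the global order type $\omega^\omega$ from these local computations, by writing $\overline{\mathscr{D}}$ as the increasing union of the initial segments $\overline{\mathscr{D}}\cap[0,k]$ and taking the supremum $\sup_k(\omega^k+1)=\omega^\omega$ of their order types.
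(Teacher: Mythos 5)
Your proof is correct, and it follows the paper's general template---both arguments rest on Proposition~\ref{closure} together with the order-type computations of Proposition~\ref{rwo1} and Theorem~\ref{rwo2}---but you handle the crucial point differently. The paper obtains the order type $\omega^k+1$ by viewing $\overline{\mathscr{D}}\cap[0,k]$ as the closure of $\mathscr{D}\cap[0,k]$ inside $[0,k]$ and invoking the least-upper-bound property (the supremum exists, and a set of limit order type cannot contain it), and then locates $k$ as the $\omega^k$'th element only indirectly, by noting that $\overline{\mathscr{D}}\cap[0,r]$ has order type less than $\omega^k$ for every $r<k$. You instead prove directly that $\sup\bigl(\mathscr{D}\cap[0,k)\bigr)=k$, by pinching: if the supremum were some $s_0<k$, then $\mathscr{D}\cap[0,k)$ would equal $\mathscr{D}\cap[0,s')$ for $s'\in(\max(s_0,k-1),k)$, whose order type Proposition~\ref{rwo1} bounds strictly below $\omega^{\lfloor s'\rfloor+1}=\omega^k$, contradicting the exact value $\omega^k$ established in the proof of Theorem~\ref{rwo2}. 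This buys you something concrete: the membership $k\in\overline{\mathscr{D}}$ and the identity of the top element emerge as explicit facts rather than by elimination, and your argument also cleanly covers the mildly delicate case $k=1$, where $\mathscr{D}\cap[0,1]$ has order type $\omega+1$ rather than $\omega$, so the paper's one-line appeal to Proposition~\ref{closure} tacitly needs exactly your fact that $\sup\bigl(\mathscr{D}\cap[0,1)\bigr)=1\in\mathscr{D}$ to rule out $\omega+2$. The price is the extra bookkeeping of identifying $\overline{\mathscr{D}}\cap[0,k]$ with $\overline{\mathscr{D}\cap[0,k)}$, which you carry out correctly; your justification that $\mathscr{D}$ is unbounded (asserted without proof in the paper) is likewise a welcome addition, as is the closing observation recovering the global type $\omega^\omega$ as $\sup_k(\omega^k+1)$.
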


\begin{proof}
By Proposition~\ref{closure}, the set $\overline{\mathscr{D}}$ is well-ordered,
and its order type is $\omega^\omega$ since $\mathscr{D}$ is unbounded in
$\mathbb{R}$.  For the set $\overline{\mathscr{D}}\cap[0,k]$, observe that this
set is is the same as the closure of $\mathscr{D}\cap[0,k]$ within $[0,k]$, so
Proposition~\ref{closure} implies this has order type $\omega^k+1$ since $[0,k]$
has the least-upper-bound property.  And since by Proposition~\ref{closure}, for
$r<k$ the set $\overline{\mathscr{D}}\cap[0,r]$ has order type less than
$\omega^k$, the $\omega^k$'th element must be $k$ itself.
\end{proof}

The other variants of the main result include considering defect sets for
integers $n$ whose complexity $\cpx{n}$ falls in individual congruence classes
modulo $3$ and, in a separate direction, restricting to stable defects.
Furthermore results in both directions can be combined.  These defect sets are
all well-ordered by virtue of being contained in $\overline{\mathscr{D}}$, and
the issue is to show they have the appropriate order type.

To prove the main theorem, we needed to
know that given a low-defect pair $(f,C)$ of degree $k$, we have
$\cpx{f(3^{n_1},\ldots,3^{n_k})}\le C+3(n_1+\ldots+n_k)$.  In order to prove
these more detailed versions, as a preliminary result we  demonstrate that for certain low-defect
pairs $(f,C)$, equality holds for ``most'' choices of $(n_1,\ldots,n_k)$.
Indeed, we'll need an even stronger statement: Since
$\cpx{f(3^{n_1},\ldots,3^{n_k})}\le C+3(n_1+\ldots+n_k)$, it follows that also
\[\cpx{f(3^{n_1},\ldots,3^{n_k})}_\st\le C+3(n_1+\ldots+n_k),\]
and it's equality in this form that we'll need for ``most'' $(n_1,\ldots,n_k)$.

\begin{prop}
\label{dump}
Let $(f,C)$ be a low-defect pair of degree $k$ with $\dft(f,C)<k+1$.  Define its
``exceptional set'' to be
\[
S:=\{(n_1,\ldots,n_k): \cpx{f(3^{n_1},\ldots,3^{n_k})}_\st<C+3(n_1+\ldots+n_k)\}
\]
Then the set $\{\dft(f(3^{n_1},\ldots,3^{n_k})):(n_1,\ldots,n_k)\in S\}$ has
order type less than $\omega^k$.  In particular, the
set $\{\dft(f(3^{n_1},\ldots,3^{n_k})):(n_1,\ldots,n_k)\notin S\}$ has order
type at least $\omega^k$, and thus so does the set
\[ \{\dft(f(3^{n_1},\ldots,3^{n_k})):(n_1,\ldots,n_k)\in \mathbb{Z}^k_{\ge
0}\} \cap \mathscr{D}_\st^C. \]
\end{prop}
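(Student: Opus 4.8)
The plan is to reduce all three assertions to the first one, namely that $W:=\{\dft(f(3^{n_1},\ldots,3^{n_k})):(n_1,\ldots,n_k)\in S\}$ has order type less than $\omega^k$. Writing $N=f(3^{n_1},\ldots,3^{n_k})$, I first observe that for $(n_1,\ldots,n_k)\notin S$ the defining inequality fails, so $\cpx{N}_\st=C+3(n_1+\ldots+n_k)$; since $\cpx{N}_\st\le\cpx{N}\le C+3(n_1+\ldots+n_k)$ by Proposition~\ref{basicub}, all three quantities coincide, whence by Proposition~\ref{stabisstab} the number $N$ is stable and $\cpx{N}\equiv C\pmod 3$. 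Thus $\dft(N)\in\mathscr{D}_\st^{C}$ (the stable defects of numbers of complexity $\equiv C\pmod3$), so the non-exceptional defects all lie in $\mathscr{D}_\st^C$. By Proposition~\ref{wo2} the full set $\{\dft(N):(n_1,\ldots,n_k)\in\Z_{\ge0}^k\}$ has order type at least $\omega^k$; as it is the union of $W$ with the non-exceptional set, Proposition~\ref{cutandpaste}(1) shows that once $W$ has order type less than $\omega^k$, the non-exceptional set must have order type at least $\omega^k$, which yields both the ``in particular'' statement and, since that set sits inside $\mathscr{D}_\st^C$, the final claim.

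To control $W$ I would first restate membership in $S$ defect-theoretically. By Proposition~\ref{stabcpxprops} one has $\dft_\st(N)=\cpx{N}_\st-3\log_3N$, while $\dft_{f,C}(n_1,\ldots,n_k)=C+3(n_1+\ldots+n_k)-3\log_3N$, so $(n_1,\ldots,n_k)\in S$ exactly when $\dft_\st(N)<\dft_{f,C}(n_1,\ldots,n_k)$. Both sides are congruent to $\dft(N)\bmod 1$ (the left by Theorem~\ref{oldprops}(2), the right by Proposition~\ref{dftbd}(1)), so exceptionality is equivalent to $\dft_\st(N)\le\dft_{f,C}(n_1,\ldots,n_k)-1$. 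I would then induct on the degree $k$. When $k=0$, $f$ is the constant equal to its leading coefficient $a$, and $\dft(f,C)<1$ together with $C\ge\cpx a$ forces $C=\cpx a$ and hence $\dft(a)<1$; a number of defect less than $1$ is automatically stable (otherwise its stable defect would be strictly smaller yet congruent mod $1$, forcing a defect of at least $1$), so $S=\emptyset$ and $W$ is empty.

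For the inductive step I would use Lemma~\ref{maxvar} to write $f=h\otimes(g\otimes x_k+c)$, so that $N=P\cdot Q$ with $P=h(3^{n_1},\ldots,3^{n_s})$ and $Q=g(3^{n_{s+1}},\ldots,3^{n_{k-1}})3^{n_k}+c$. Exceptionality says some power $3^jN$ is representable strictly more cheaply than $C+3(n_1+\ldots+n_k+j)$; feeding such a representation into the complexity recursion and the classification of Theorem~\ref{themethod}, the saving must arise from one of finitely many structurally simpler sources. When $1\le s\le k-1$ both factors have degree strictly below $k$ and the saving is inherited by an exceptional representation of $P$ or of $Q$, handled by the inductive hypothesis; the genuinely more efficient additive decompositions of the ``$+c$'' summand are, by Theorem~\ref{themethod}, governed by finitely many solid numbers $b$ and factors $v\in B_\alpha$ and drop a variable; and a bounded range of $n_k$ contributes, for each fixed $n_k=t$, the image of the defect function attached to $f(x_1,\ldots,x_{k-1},3^t)$, which is multilinear with nonzero constant term and so, by the reverse-polynomial argument of Proposition~\ref{mono} and by Proposition~\ref{image}, has image of order type at most $\omega^{k-1}$. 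In every case the exceptional defects are covered by the defect sets of finitely many low-defect pairs of degree at most $k-1$, each of order type less than $\omega^k$ by Proposition~\ref{wo2}, so Proposition~\ref{cutandpaste}(1) gives that $W$ has order type less than $\omega^k$.

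The hard part will be the case $s=0$, where $N=a\,Q$ and the factorization does not lower the degree: here I must show that, outside a set of defects of order type less than $\omega^k$, exceptionality forces the additive term $c$ to be absorbed so that a variable is genuinely eliminated. Establishing this appears to require combining the eventual stabilization of $\dft(3^mN)$ for large $m$ (Theorem~\ref{oldprops}(4)) with the additive structure of Theorem~\ref{themethod}, and it is exactly here that the hypothesis $\dft(f,C)<k+1$ is used: it bounds the integer saving $\dft_{f,C}-\dft_\st$ and keeps every exceptional stable defect below $k$, so that the exceptional contributions cannot accumulate to order type $\omega^k$.
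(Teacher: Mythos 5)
Your reduction of the second and third claims to the first is exactly the paper's (non-exceptional tuples give $\cpx{N}_\st=\cpx{N}=C+3(n_1+\ldots+n_k)$, hence stable $N$ with $\cpx{N}\equiv C\pmod 3$, then Proposition~\ref{wo2} plus Proposition~\ref{cutandpaste}), and your reformulation of $S$ --- that $(n_1,\ldots,n_k)\in S$ if and only if $\dft_\st(f(3^{n_1},\ldots,3^{n_k}))\le \dft_{f,C}(n_1,\ldots,n_k)-1$, via the mod-$1$ congruence of both sides with $\dft(N)$ --- is the paper's first step verbatim. The gap is everything after that. The structural induction on $k$ via Lemma~\ref{maxvar} and Theorem~\ref{themethod} is never carried out: the assertion that a cheaper representation of $3^jN$ must be ``inherited by an exceptional representation of $P$ or of $Q$'' or arise from ``finitely many structurally simpler sources'' is not something Theorem~\ref{themethod} delivers (it classifies most-efficient representations of leaders of bounded defect; it says nothing about where the saving in an arbitrary non-optimal product comes from), and you concede the case $s=0$ outright. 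As written, the first claim --- the only one that needs proving --- remains unproved.

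The irony is that your final sentence nearly states the missing step: for $(n_1,\ldots,n_k)\in S$, your reformulation together with Proposition~\ref{dftbd} gives $\dft_\st(N)\le \dft_{f,C}(n_1,\ldots,n_k)-1\le\dft(f,C)-1<k$. So the exceptional \emph{stable} defects all lie in $\mathscr{D}\cap[0,s)$ for any $s$ with $\dft(f,C)-1<s<k$, and Proposition~\ref{rwo1} --- already proved before this proposition, and available to you --- says that set has order type less than $\omega^{\lfloor s\rfloor+1}\le\omega^k$; no induction on the structure of $f$ is needed, because Proposition~\ref{rwo1} bounds \emph{every} set of defects below $k$ at once. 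Then $\dft(N)=\dft_\st(N)+j$ with $j$ a nonnegative integer (Theorem~\ref{oldprops}(2) and Proposition~\ref{staltchar}), and $j\le\dft(f,C)$ since $\dft(N)\le\dft_{f,C}(n_1,\ldots,n_k)\le\dft(f,C)$ while $\dft_\st(N)\ge0$; hence $W$ is covered by the finitely many integer translates $+j$, $0\le j\le\lfloor\dft(f,C)\rfloor$, of subsets of a set of order type less than $\omega^k$, and Proposition~\ref{cutandpaste}(1) gives $W$ order type less than $\omega^k$. That is the paper's entire proof: the hypothesis $\dft(f,C)<k+1$ is used exactly twice, once to push the exceptional stable defects strictly below $k$ and once to bound the integer translates, rather than in the delicate way your sketch anticipates.
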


\begin{proof}
The set $S$ can be equivalently written as
\[ \{ (n_1,\ldots,n_k): \cpx{f(3^{n_1},\ldots,3^{n_k})}_\st \le
C+3(n_1+\ldots+n_k)-1 \}\]
and hence as
\[ \{ (n_1,\ldots,n_k): \dft_\st(f(3^{n_1},\ldots,3^{n_k}))\le
\dft_{f,C}(n_1,\ldots,n_k)-1 \}.\]

Hence for $(n_1,\ldots,n_k)\in S$, we have
\[\dft_\st(f(3^{n_1},\ldots,3^{n_k}))\le\dft(f,C)-1<k,\] and thus by
Proposition~\ref{rwo1}, the set of these stable defects has order type less than
$\omega^k$.

Equivalently, applying Proposition~\ref{cutandpaste}, the set
\[\{\dft(f(3^{n_1},\ldots,3^{n_k})):(n_1,\ldots,n_k)\in S\}\] and the set
$\dft_{f,C}(S)$ have order type less than $\omega^k$, since each is a finite
union of translates of subsets of the set
$\{\dft(f(3^{n_1},\ldots,3^{n_k})):(n_1,\ldots,n_k)\in S\}$.

So consider the set
\[\{\dft(f(3^{n_1},\ldots,3^{n_k})):(n_1,\ldots,n_k)\notin S\},\]
which can equivalently be written as
\[\{\dft_\st(f(3^{n_1},\ldots,3^{n_k})):(n_1,\ldots,n_k)\notin S\},\]
since for $(n_1,\ldots,n_k)\notin S$, the number $f(3^{n_1},\ldots,3^{n_k})$ is
stable.  This set must have order type at least $\omega^k$ by
Proposition~\ref{wo2} and Proposition~\ref{cutandpaste}.  Since for
$(n_1,\ldots,n_k)\notin S$, we have that $f(3^{n_1},\ldots,3^{n_k})$ is stable
and
\[\cpx{f(3^{n_1},\ldots,3^{n_k})} = C+3(n_1+\ldots+n_k)\equiv C\pmod{3},\]
this implies that the set
\[ \{\dft(f(3^{n_1},\ldots,3^{n_k})):(n_1,\ldots,n_k)\in \mathbb{Z}^k_{\ge 0}\}
\cap \mathscr{D}_\st^C, \]
being a superset of the above, has order type at least $\omega^k$.
\end{proof}

Recall that $\mathscr{D}_\st^a$ denotes the set of defect values $\delta(n)$
taken by stable numbers $n$ having complexity $\cpx{n} \equiv a ~(\bmod \,
3).$ Using the Proposition above, we can now prove:

\begin{thm}
\label{thmsmall} 
For $a=0,1,2$, the  stable defect sets $\mathscr{D}_\st^a$ are well-ordered, with order type
$\omega^\omega$.  Furthermore, if $k\equiv a\pmod{3}$, then the set
$\mathscr{D}_\st^a\cap[0,k]$ has order type $\omega^k$.
\end{thm}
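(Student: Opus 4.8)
The plan is to establish the order-type formula for $\mathscr{D}_\st^a\cap[0,k]$ first, and then deduce the $\omega^\omega$ statement by letting $k\to\infty$ through residues $\equiv a\pmod 3$. Well-orderedness is immediate: $\mathscr{D}_\st^a\subseteq\mathscr{D}\subseteq\overline{\mathscr{D}}$, which is well-ordered by Theorem~\ref{omgomg} (equivalently Proposition~\ref{thmbig}). For the lower bound I would feed a single explicit low-defect pair into Proposition~\ref{dump}. Fix $k\equiv a\pmod 3$ with $k\ge 1$ and take
\[ f = (\ldots(((3x_1+1)x_2+1)x_3+1)\ldots)x_k+1, \]
the polynomial already used in the proof of Proposition~\ref{rwo1}, with base complexity $C=3+k$. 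Its leading coefficient is $3$, so $\dft(f,C)=(3+k)-3=k<k+1$, and crucially $C=3+k\equiv k\equiv a\pmod 3$. Proposition~\ref{dump} then yields that $\{\dft(f(3^{n_1},\ldots,3^{n_k})):(n_1,\ldots,n_k)\in\Z_{\ge 0}^k\}\cap\mathscr{D}_\st^C$ has order type at least $\omega^k$; since $\mathscr{D}_\st^C=\mathscr{D}_\st^a$, this is the desired lower bound, provided these defects land in $[0,k]$. That they do is immediate from Proposition~\ref{dftbd}: each $\dft(f(3^{n_1},\ldots,3^{n_k}))\le\dft_{f,C}(n_1,\ldots,n_k)<\dft(f,C)=k$, so the entire set in fact sits in $[0,k)$.

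For the matching upper bound I would first observe that $k\notin\mathscr{D}_\st^a$ whenever $k\ge 1$: the only positive integer defect is $1$ (Theorem~\ref{oldprops}(7)), and it arises solely from $n=1$, which is neither stable nor admitted into any $\mathscr{D}^a$. Hence $\mathscr{D}_\st^a\cap[0,k]=\mathscr{D}_\st^a\cap[0,k)\subseteq\mathscr{D}\cap[0,k)$, and the latter has order type exactly $\omega^k$ (this is the fact extracted in the proof of Theorem~\ref{rwo2} for integer $k\ge 1$). Thus $\mathscr{D}_\st^a\cap[0,k]$ has order type at most $\omega^k$, so combined with the lower bound it is exactly $\omega^k$. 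The case $k=0$ (forcing $a=0$) is trivial and handled separately: $\mathscr{D}_\st^0\cap[0,0]=\{0\}$, of order type $1=\omega^0$, matching the degree-$0$ instance of Proposition~\ref{dump}.

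Finally, the $\omega^\omega$ claim is formal: for each $k\equiv a\pmod 3$ the set $\mathscr{D}_\st^a\cap[0,k]$ is an initial segment of $\mathscr{D}_\st^a$ of order type $\omega^k$, so as $k$ ranges over arbitrarily large such values the order type of $\mathscr{D}_\st^a$ is at least $\sup_k\omega^k=\omega^\omega$; and it is at most $\omega^\omega$ because $\mathscr{D}_\st^a\subseteq\mathscr{D}$. I expect the friction here to be bookkeeping rather than genuine difficulty, since Proposition~\ref{dump} already carries the analytic weight of the lower bound and deposits its output directly inside $\mathscr{D}_\st^C$. The two points needing care are arranging the base complexity of the test polynomial to fall in the prescribed class $a$ (which the choice $C=3+k$ does automatically, as $k\equiv a$), and the upper bound at $k=1$: there one must use $\mathscr{D}\cap[0,1)$ rather than $\mathscr{D}\cap[0,1]$ — the latter has order type $\omega+1$ — and so must verify that the boundary defect $1$ genuinely fails to belong to $\mathscr{D}_\st^1$.
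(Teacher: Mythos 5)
Your proof is correct, and its engine is the same as the paper's: the lower bound comes from feeding the low-defect pair $\bigl(f_{3,k},3+k\bigr)$ with $f_{3,k}=(\ldots((3x_1+1)x_2+1)\ldots)x_k+1$ into Proposition~\ref{dump} (checking $\dft(f_{3,k},3+k)=k<k+1$ and $3+k\equiv a\pmod 3$, with Proposition~\ref{dftbd} confining the resulting defects to $[0,k)$), and the upper bound comes from Theorem~\ref{rwo2} plus the observation that the boundary value needs care at $k=1$ because $1\notin\mathscr{D}_\st$ and $1\notin\mathscr{D}^1$ — all exactly as in the paper. The one genuine organizational difference is in how the unrestricted $\omega^\omega$ claim is obtained. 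The paper proves it first and directly, using the three families $(f_{a,k},\cpx{a}+k)$ for $a=2,3,4$: since $\dft(a)<1$ for these $a$ and $\{2+k,3+k,4+k\}$ is a complete residue system modulo $3$ for every $k$, each $\mathscr{D}_\st^a$ receives a lower bound of $\omega^k$ for \emph{every} $k$, not just $k\equiv a\pmod 3$. You instead prove the bounded statement first and deduce the $\omega^\omega$ claim by letting $k\to\infty$ through the arithmetic progression $k\equiv a\pmod 3$, which is unbounded, so $\sup_k\omega^k=\omega^\omega$ still results. Your route is slightly more economical — one polynomial family instead of three, and no separate argument for the first assertion — at the mild cost of only exhibiting initial segments of type $\omega^k$ along one residue class, which is all the supremum needs; the paper's version yields the marginally stronger intermediate fact that $\mathscr{D}_\st^a$ has order type at least $\omega^k$ for all $k$ simultaneously. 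Your handling of $k=0$ by direct inspection of $\{0\}$ (using that $3$ is stable with $\cpx{3}\equiv 0\pmod 3$) is also fine and fills a case the paper passes over silently.
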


\begin{proof}
Each of these sets is a subset of $\mathscr{D}$ and so they are well-ordered
with order type at most $\omega^\omega$.  To check that it is in fact exactly
$\omega^\omega$, consider the following low-defect polynomial:
\[ f_{a,k} := (\ldots(((ax_1+1)x_2+1)x_3+1)\ldots)x_k+1. \]
Specifically, consider the low-defect pair $(f_{a,k},\cpx{a}+k)$, for $a=2,3,4$.
Observe that $\dft(f_{a,k},\cpx{a}+k)=\dft(a)+k$, and for these choices of $a$,
we have $\dft(a)<1$.  Thus for $a=2,3,4$, $f_{a,k}$ satisfies the conditions of
Proposition~\ref{dump}.  Thus for $a=2,3,4$ and $k\ge 0$,
$\mathscr{D}_\st^{a+k}$ has order type at least $\omega^k$.  Since regardless of
$k$, the set $\{2+k,3+k,4+k\}$ is a complete system of residues modulo $3$, it
follows that for $a=0,1,2$ and any $k$, the set $\mathscr{D}_\st^a$ has order
type at least $\omega^k$.  Hence $\mathscr{D}_\st^a$ has order type at least
$\omega^\omega$ and hence exactly $\omega^\omega$.

Now suppose we take $k\equiv a\pmod{3}$.  We know, if $k\ne 1$, that
$\mathscr{D}_\st^a\cap[0,k]$ has order type at most $\omega^k$ by
Theorem~\ref{rwo2}.  (If $k=1$, we know this because $1\notin \mathscr{D}_\st$.)
To see that it is at least $\omega^k$, we consider the low-defect pair
$(f_{3,k},3+k)$.  Observe that $\dft(f_{3,k},3+k)=k$, and so (by
Proposition~\ref{dump}) the set $\mathscr{D}_\st^{3+k}\cap[0,k]$ has order type
at least $\omega^k$.  Since $3+k\equiv a\pmod{3}$, this is the same as the set
$\mathscr{D}_\st^{a}\cap[0,k]$, proving the claim.
\end{proof}

With this result in hand,  we can now prove:

\begin{thm}
\label{thmfinal}
We have:
\begin{enumerate}
\item The defect set $\mathscr{D}$ and stable defect set $\mathscr{D}_\st$ are both well-ordered, both with
order type $\omega^\omega$.  Furthermore, the set $\mathscr{D}_\st\cap[0,k]$ has
order type $\omega^k$, and for $k\ne1$, so does $\mathscr{D}\cap[0,k]$.
\item The sets $\overline{\mathscr{D}_\st}$ and $\overline{\mathscr{D}}$ are
well-ordered, both with order type $\omega^\omega$.  Furthermore, for $k\ge 1$,
the sets $\overline{\mathscr{D}_\st}\cap[0,k]$ and
$\overline{\mathscr{D}_\st}\cap[0,k]$ have order type $\omega^k+1$ (and both
contain $k$, so $k$ is the $\omega^k$'th element of both).
\item For $a=0,1,2$, the sets $\mathscr{D}^a$ and $\mathscr{D}^a_\st$ are all
well-ordered, each with order type $\omega^\omega$.  Furthermore, if $a\equiv
k\pmod{3}$, then $\mathscr{D}^a\cap[0,k]$ and $\mathscr{D}^a_\st\cap[0,k]$ have
order type $\omega^k$
\item For $a=0,1,2$, the sets $\overline{\mathscr{D}^a}$ and
$\overline{\mathscr{D}_\st^a}$ are well-ordered with order type $\omega^\omega$.
Furthermore, if $k\ge1$ and $a\equiv k\pmod{3}$, then
$\overline{\mathscr{D}^a}\cap[0,k]$ and $\overline{\mathscr{D}_\st^a}\cap[0,k]$
have order type $\omega^k+1$ (and each contains $k$, so $k$ is the $\omega^k$'th element).
\end{enumerate}
\end{thm}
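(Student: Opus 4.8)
The plan is to treat Theorem~\ref{thmfinal} largely as an assembly of results already in hand, supplemented by a single new limit-point computation. The uniform observation is that every set named in the statement is a subset of $\mathscr{D}$, and every closure named is a subset of $\overline{\mathscr{D}}$; so by Theorem~\ref{omgomg} (together with Proposition~\ref{closure}) all of them are automatically well-ordered, with order type at most $\omega^\omega$, and their intersections with $[0,k]$ inherit the upper bounds $\omega^k$ or $\omega^k+1$ from Theorem~\ref{rwo2} and Proposition~\ref{thmbig}. Thus every upper bound is free, and the entire content lies in producing matching lower bounds. The substantive lower bounds are precisely what Theorem~\ref{thmsmall} already supplies for the sets $\mathscr{D}^a_\st$; everything else follows by sandwiching and by closure arguments.

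First I would dispose of the non-closure assertions, parts (1) and (3). The sets $\mathscr{D}^a_\st$ are handled verbatim by Theorem~\ref{thmsmall}. For $\mathscr{D}_\st$ I would write $\mathscr{D}_\st=\mathscr{D}^0_\st\cup\mathscr{D}^1_\st\cup\mathscr{D}^2_\st$ and note that a union of finitely many sets of type $\omega^\omega$, all contained in $\mathscr{D}$, again has type $\omega^\omega$; for $\mathscr{D}^a$ I would sandwich $\mathscr{D}^a_\st\subseteq\mathscr{D}^a\subseteq\mathscr{D}$, and likewise $\mathscr{D}^a_\st\cap[0,k]\subseteq\mathscr{D}^a\cap[0,k]\subseteq\mathscr{D}\cap[0,k]$, reading off the order types from Theorem~\ref{thmsmall} and Theorem~\ref{rwo2}. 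The one point demanding care is the edge case $k=1$: by parts (5) and (7) of Theorem~\ref{oldprops} the defect $1$ is attained only by the integer $1$, which is excluded from every $\mathscr{D}^a$ and is unstable, so $1\notin\mathscr{D}_\st,\mathscr{D}^1,\mathscr{D}^1_\st$. Consequently the intersections of these three sets with $[0,1]$ coincide with their intersections with $[0,1)$ and have order type $\omega$ rather than $\omega+1$, which is exactly why $\mathscr{D}_\st\cap[0,1]$ behaves differently from $\mathscr{D}\cap[0,1]$.

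Next I would turn to the closures, parts (2) and (4); the set $\overline{\mathscr{D}}$ is already handled by Proposition~\ref{thmbig}, leaving $\overline{\mathscr{D}_\st}$, $\overline{\mathscr{D}^a}$, and $\overline{\mathscr{D}^a_\st}$. That each of these has type $\omega^\omega$ is immediate from Proposition~\ref{closure}: each underlying set has type $\omega^\omega$, a limit ordinal, and is unbounded in $\mathbb{R}$, so no new maximal element appears. For the bounded versions I would first reduce, exactly as in the proof of Proposition~\ref{thmbig}, to computing the closure of $X\cap[0,k]$ inside $[0,k]$ (limit points of a well-ordered set are approached from below, so $\overline{X}\cap[0,k]=\overline{X\cap[0,k]}$). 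Since $X\cap[0,k]$ has type $\omega^k$ (a limit ordinal) by parts (1) and (3), and $[0,k]$ has the least-upper-bound property, Proposition~\ref{closure} gives type $\omega^k+1$ provided the supremum of $X\cap[0,k]$ lies outside the set; and that supremum will be shown to be $k$, which for $k\ge1$ is not in $X$.

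The main obstacle, and the only genuinely new argument, is showing that $k$ is a limit point of the restricted stable set $\mathscr{D}^a_\st$ whenever $a\equiv k\pmod 3$; limit-point-hood then propagates upward to $\mathscr{D}_\st$, $\mathscr{D}^a$, and $\mathscr{D}$, all of which contain $\mathscr{D}^a_\st$. Here I would reuse the low-defect pair $(f_{3,k},3+k)$ from the proof of Theorem~\ref{thmsmall}, which has $\dft(f_{3,k},3+k)=k$. By Proposition~\ref{dump} the non-exceptional inputs $(n_1,\dots,n_k)\notin S$ yield stable numbers of complexity $\equiv 3+k\equiv k\pmod 3$ whose defects equal the corresponding values $\dft_{f_{3,k},3+k}(n_1,\dots,n_k)$, and the set of these defects has order type at least $\omega^k$. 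On the other hand, the full image of $\dft_{f_{3,k},3+k}$ has order type exactly $\omega^k$ by Proposition~\ref{wo1}, and its supremum is $\dft(f_{3,k},3+k)=k$, as is visible from the identity $\dft_{f,C}(n_1,\dots,n_k)=C-3\log_3 g(3^{-n_1},\dots,3^{-n_k})$ for the reverse polynomial $g$ introduced in Proposition~\ref{mono}. A subset of an order-type-$\omega^k$ set that itself has order type at least $\omega^k$ must be cofinal, so these non-exceptional defects are cofinal in the full image and hence also have supremum $k$. As they are stable defects in class $a$ lying strictly below $k$, they exhibit $k$ as a limit point of $\mathscr{D}^a_\st$, which is the fact the closure computations required.
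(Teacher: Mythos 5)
Your proposal is correct, and its overall architecture matches the paper's: upper bounds come for free from containment in $\mathscr{D}$ together with Theorem~\ref{rwo2} and Proposition~\ref{thmbig}; lower bounds come from Theorem~\ref{thmsmall} via the sandwiches $\mathscr{D}^a_\st\subseteq\mathscr{D}^a\subseteq\mathscr{D}$ and $\mathscr{D}^a_\st\subseteq\mathscr{D}_\st\subseteq\mathscr{D}$; the $k=1$ edge case is handled exactly as in the paper by noting $1\notin\mathscr{D}_\st,\mathscr{D}^1,\mathscr{D}^1_\st$; and the closures are processed through Proposition~\ref{closure} after the reduction $\overline{X}\cap[0,k]=\overline{X\cap[0,k]}$. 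The one step where you genuinely diverge is the identification of $k$ as the $\omega^k$'th element of the closures. The paper argues indirectly: once Proposition~\ref{closure} yields order type $\omega^k+1$ (the supremum exists by the least-upper-bound property and cannot lie in the set, since a set of limit order type has no maximum), it observes that for $r<k$ the sets $\overline{X}\cap[0,r]$ have order type less than $\omega^k$, so the top element of $\overline{X}\cap[0,k]$ has nowhere to be but $k$. You instead prove directly that $k$ is a limit point of $\mathscr{D}^a_\st$: the full image of $\dft_{f_{3,k},3+k}$ has order type exactly $\omega^k$ with supremum $\dft(f_{3,k},3+k)=k$ (via the reverse-polynomial identity from Proposition~\ref{mono}), the non-exceptional stable defects supplied by Proposition~\ref{dump} have order type at least $\omega^k$ inside it, and a subset of a type-$\omega^k$ set of type at least $\omega^k$ is cofinal, so these stable defects in class $a$ accumulate at $k$ from below. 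Both arguments are sound; the paper's is shorter because it recycles the quantitative bounds of Proposition~\ref{rwo1}, while yours is more constructive, exhibiting explicit witnesses (stable numbers in the right congruence class whose defects converge to $k$) and incidentally establishing the useful fact that $\sup \operatorname{im}\dft_{f,C}=\dft(f,C)$, which the paper never states. One point worth making explicit if you write this up: your blanket claim that ``$X\cap[0,k]$ has type $\omega^k$'' in the closure step fails for $X=\mathscr{D}$, $k=1$ (where the type is $\omega+1$ since $1\in\mathscr{D}$), but you correctly sidestep this by delegating $\overline{\mathscr{D}}$ wholesale to Proposition~\ref{thmbig}, just as the paper does.
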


\begin{proof}
The part of (1) for $\mathscr{D}$ is just Theorem~\ref{rwo2}. To prove the rest, observe that the
order type of $\mathscr{D}_\st$ is $\omega^\omega$ because it is contained in
$\mathscr{D}$ and contains, e.g., $\mathscr{D}_\st^0$.  For $k\ne 1$, we can see
that the order type of $\mathscr{D}_\st\cap[0,k]$ is at most $\omega^k$ because
it is contained in $\mathscr{D}\cap[0,k]$.  For $k=1$, we need to additionally
note that $1\notin \mathscr{D}_\st$.  Finally, the order type of
$\mathscr{D}_\st\cap[0,k]$ is at least $\omega^k$ because it contains
$\mathscr{D}_\st^k\cap[0,k]$.

The part of (2) for $\overline{\mathscr{D}}$ is  Proposition~\ref{thmbig}.  To prove the rest, note that by
(1), $\mathscr{D}_\st$ is unbounded in $\mathbb{R}$, and so
Proposition~\ref{closure} implies that $\overline{\mathscr{D}_\st}$ is
well-ordered with order type $\omega^\omega$.  For
$\overline{\mathscr{D}_\st}\cap[0,k]$, (1) together with
Proposition~\ref{closure} implies this has order $\omega^k+1$.  And since by
Proposition~\ref{closure}, for $r<k$ the set
$\overline{\mathscr{D}_\st}\cap[0,r]$ has order type less than $\omega^k$, the
$\omega^k$'th element must be $k$ itself.

The part of (3) for  $\overline{\mathscr{D}^a}$ is just Theorem~\ref{thmsmall}.  To prove the rest, observe that the
sets $\mathscr{D}^a$ are well-ordered with order type $\omega^\omega$ because
they contain $\mathscr{D}_\st^a$ and are contained in $\mathscr{D}$.
Furthermore, if $a\equiv k\pmod{3}$, then $\mathscr{D}^a\cap [0,k]$ has order
type at least $\omega^k$ by Theorem~\ref{thmsmall}.  If $k\ne 1$, then
Theorem~\ref{rwo2} shows it has order type at most $\omega^k$; for $k=1$, we
need to additionally note that $1\notin \mathscr{D}^a$.

Finally, to prove (4), note that by Theorem~\ref{thmsmall} and (3),
$\mathscr{D}^a$ and $\mathscr{D}_\st^a$ are unbounded in $\mathbb{R}$, and so
Proposition~\ref{closure} implies $\overline{\mathscr{D}_\st^a}$ and
$\overline{\mathscr{D}^a}$ are well-ordered with order type $\omega^\omega$.
For $\overline{\mathscr{D}^a_\st}\cap[0,k]$ and
$\overline{\mathscr{D}^a}\cap[0,k]$, Theorem~\ref{thmsmall} and (3) together
with Proposition~\ref{closure} imply these have order type $\omega^k+1$.  And
since by Proposition~\ref{closure}, for $r<k$ the sets
$\overline{\mathscr{D}^a_\st}\cap[0,r]$ and $\overline{\mathscr{D}^a}\cap[0,r]$
has order type less than $\omega^k$, the $\omega^k$'th element must be $k$
itself.
\end{proof}

We can also re state this result in the following way:
\begin{cor}
We have:
\begin{enumerate}
\item For $k\ge 1$, the $\omega^k$'th elements of $\overline{\mathscr{D}}$ and
$\overline{\mathscr{D}_\st}$ are both $k$.  If $a\equiv k\pmod{3}$, this is also
true of $\overline{\mathscr{D}^a}$ and $\overline{\mathscr{D}^a_\st}$.
\item For $k\ge 0$, the supremum of the initial $\omega^k$ elements of
$\mathscr{D}$ is $k$, and so is that of the initial $\omega^k$ elements of
$\mathscr{D}_\st$.  If $a\equiv k\pmod{3}$, then this is also true of
$\mathscr{D}^a$ and $\mathscr{D}^a_\st$.
\end{enumerate}
\end{cor}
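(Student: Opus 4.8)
The plan is to recognize that this corollary is essentially a repackaging of the order-type computations already established in Theorem~\ref{thmfinal}, combined with the limit-point description of closures supplied by Corollary~\ref{limitvsclos}. Almost everything follows by quoting these two results; the only genuine bookkeeping is a single edge case.

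First I would dispatch Part (1). For each of the four closure sets $\overline{\mathscr{D}}$, $\overline{\mathscr{D}_\st}$, $\overline{\mathscr{D}^a}$, and $\overline{\mathscr{D}^a_\st}$ (the latter two under the standing hypothesis $a\equiv k\pmod{3}$), Theorem~\ref{thmfinal}(2) and (4) assert that the initial segment $[0,k]$ has order type $\omega^k+1$ and contains $k$ as its maximum element. Since $\omega^k+1$ is a successor ordinal, this maximum is the final element, and the elements strictly preceding it form an initial segment of order type $\omega^k$. By the convention for indexing the elements of a well-ordered set by ordinals, $k$ is therefore precisely the $\omega^k$'th element. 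This establishes Part (1) with no further work.

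For Part (2) I would relate the supremum of the initial $\omega^k$ elements of each defect set to the $\omega^k$'th element of its closure. For $k\ge 1$, the ordinal $\omega^k$ is a limit ordinal satisfying $\omega^k<\omega^\omega$, where $\omega^\omega$ is the order type of each of $\mathscr{D}$, $\mathscr{D}_\st$, $\mathscr{D}^a$, and $\mathscr{D}^a_\st$ (the last two under $a\equiv k\pmod{3}$, by Theorem~\ref{thmfinal}). Since $\mathbb{R}$ has the least-upper-bound property, Corollary~\ref{limitvsclos} applies and identifies the supremum of the initial $\omega^k$ elements of the defect set with the $\omega^k$'th element of its closure; by Part (1) that element is $k$, so the supremum equals $k$. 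The case $k=0$ must be treated separately, since $\omega^0=1$ is not a limit ordinal and Corollary~\ref{limitvsclos} does not apply: here the ``initial $\omega^0$ elements'' is just the single smallest element, namely $\dft(3)=0=k$, whose supremum is trivially $0$.

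The main (and essentially only) obstacle is the degenerate case $k=0$ in Part (2), where $\omega^0=1$ forces a direct argument rather than an appeal to the limit-point machinery. Beyond that, the one point requiring care is to fix the indexing convention consistently, so that ``the portion below $k$ has order type $\omega^k$'' translates correctly into ``$k$ is the $\omega^k$'th element''; once this convention is pinned down, both parts reduce immediately to Theorem~\ref{thmfinal} and Corollary~\ref{limitvsclos}.
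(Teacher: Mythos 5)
Your proposal is correct and follows essentially the same route as the paper: Part (1) is read off directly from Theorem~\ref{thmfinal} (whose statements already include that $k$ is the $\omega^k$'th element of each closure), and Part (2) for $k\ge 1$ combines Theorem~\ref{thmfinal} with Corollary~\ref{limitvsclos} applied in $\mathbb{R}$, with the $k=0$ case handled separately by noting that $0=\dft(3)$ is the least element of each relevant set. Your explicit remarks on the indexing convention and on why $\omega^0$ falls outside the scope of Corollary~\ref{limitvsclos} merely make precise what the paper leaves implicit.
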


\begin{proof}
Part (1) is just Theorem~\ref{thmfinal}.  Part (2), for $k\ge 1$, is
Theorem~\ref{thmfinal} and Corollary~\ref{limitvsclos}.  For $k=0$, this is
just the observation that $0$ is the intial element of $\mathscr{D}$ and so
also of $\mathscr{D}_\st$, $\mathscr{D}^0$, and $\mathscr{D}^0_\st$ (since
these all contain $0$).
\end{proof}

So we have now 
exhibited sixteen particular sets of defects that are well-ordered with
order type $\omega^\omega$: $\mathscr{D}$, $\mathscr{D}_\st$, the closures of
these sets, and for $a=0,1,2$, the sets $\mathscr{D}^a$, $\mathscr{D}^a_\st$,
and their closures.  We leave it for future work to resolve which of these sets
are distinct.

\subsection*{Acknowledgements}

Work of the author was supported by NSF grants DMS-0943832 and DMS-1101373.
The author thanks
J.~Arias de Reyna for suggested improvements and simplified proofs of
Proposition~\ref{initseg} and part of Proposition~\ref{wo1}.  The author thanks
J.~C.~Lagarias for help with editing and for suggesting references.  He thanks
Andreas Blass for suggesting references and further help with editing.
He is grateful to Joshua Zelinsky for much helpful discussion at the initial
stages of this work.

\appendix

\section{Conjectures of J. Arias de Reyna}
\label{secarias}

In his paper ``Complejidad de los n\'umeros naturales,'' \cite{Arias} Juan Arias
de Reyna proposed a series of conjectures about integer complexity.  These
conjectures also proposed a structure to integer complexity described by
ordinal numbers, but using a different language.  These conjectures
make assertions similar in spirit to some of the above results. Below we prove
modified versions of his conjectures 5 through 7.

The conjectures deal with the quantity $n3^{-\floor{\cpx{n}/3}}$, which is
related to (in fact, determined by) the quantity $\dft(n)$.  We recall first the
formula for the largest number writable with $k$ ones which was proved by
Selfridge (see
\cite{Guy}).

\begin{defn}
Let $E(k)$ denote the largest number writable with $k$ ones, i.e., the largest
number with complexity at most $k$.
\end{defn}

\begin{thm}[Selfridge]
The number $E(k)$ is given by the following formulae:
\begin{eqnarray*}
E(1) &=& 1\\
E(3j) &=& 3^j\\
E(3j+2) &= & 2 \cdot 3^j\\
E(3j+4) &=& 4 \cdot 3^j
\end{eqnarray*}
\end{thm}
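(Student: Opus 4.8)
The plan is to prove the two matching bounds $E(k)\ge g(k)$ and $E(k)\le g(k)$ separately, where I write $g(k)$ for the claimed closed form (so $g(1)=1$, $g(3j)=3^j$, $g(3j+2)=2\cdot 3^j$, and $g(3j+4)=4\cdot 3^j$; these cover every positive integer). The lower bound is by explicit construction and is routine: writing $3=1+1+1$ and taking products gives $\cpx{3^j}\le 3j$, hence $E(3j)\ge 3^j$; prepending a factor $2=1+1$ or $4=(1+1)(1+1)$ gives $\cpx{2\cdot 3^j}\le 3j+2$ and $\cpx{4\cdot 3^j}\le 3j+4$, hence $E(3j+2)\ge 2\cdot 3^j$ and $E(3j+4)\ge 4\cdot 3^j$; and $E(1)\ge 1$ is trivial.

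The substance is the upper bound $E(k)\le g(k)$, which I would prove by strong induction on $k$ using the recursive structure of optimal representations. For $k=1$ the only representable number is $1$. For $k\ge 2$, the number $n=E(k)$ satisfies $n\ge 2$, so any representation of it using at most $k$ ones has a tree whose root is $+$ or $\times$, splitting $n=a+b$ or $n=ab$ with integers $a,b\ge 1$ built from $i$ and $j$ ones respectively, where $i,j\ge 1$ and $i+j\le k$. The induction hypothesis gives $a\le g(i)$ and $b\le g(j)$, so it suffices to establish three facts about $g$ alone, valid for all $i,j\ge 1$: (C) $g$ is non-decreasing; (A) $g(i)+g(j)\le g(i+j)$; and (B) $g(i)g(j)\le g(i+j)$. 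Granting these, $n\le\max(g(i)+g(j),\,g(i)g(j))\le g(i+j)\le g(k)$, closing the induction; folding monotonicity in through $g(i+j)\le g(k)$ is what lets the argument handle representations that happen to use fewer than $k$ ones.

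The main obstacle is inequality (B), which encodes that building with blocks of three ones (each contributing a factor $3$) is the most efficient strategy, since $3^{1/3}>2^{1/2}=4^{1/4}$. I would prove (B) by a finite case analysis on the residues of $i$ and $j$ modulo $3$: when the correction factors $\{1,2,4\}$ attached to $i$ and $j$ multiply to exactly the correction factor of $i+j$ (for instance $2\cdot 2=4$, matching residue $1$, or a factor against a pure power of $3$), equality holds; the only cases needing care are those spending "too many" factor-of-four blocks, such as $i\equiv j\equiv 1\pmod 3$, where one checks $16\cdot 3^{(i+j-8)/3}<18\cdot 3^{(i+j-8)/3}=g(i+j)$, so the product falls strictly short. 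Inequality (A) then follows from (B) whenever $i,j\ge 2$, because $g(i),g(j)\ge 2$ forces $g(i)+g(j)\le g(i)g(j)$; this leaves only $i=1$ or $j=1$, which reduce to verifying $g(k)-g(k-1)\ge 1$, a one-line computation in each residue class. Monotonicity (C) is immediate from the closed form, so the entire difficulty is concentrated in the modular bookkeeping for (B).
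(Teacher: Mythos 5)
The paper itself contains no proof of this theorem: it is attributed to Selfridge and cited from Guy's article, then used as a known input (via Proposition~\ref{dRformulae} and Proposition~\ref{theconstants}), so there is no internal argument to compare against. Your proposal supplies a correct self-contained proof, and it is essentially the standard one: a trivial constructive lower bound, plus strong induction on $k$ using the fact that the root of any expression tree for $n=E(k)\ge 2$ splits $n$ as $a+b$ or $ab$ with $i+j\le k$ and $i,j<k$, reducing everything to monotonicity (C), superadditivity (A), and supermultiplicativity (B) of the closed form $g$. The key arithmetic point is exactly as you say: writing $g(k)=c_k 3^{\lfloor k/3\rfloor}$ with $c_k\in\{1,4/3,2\}$ for $k>1$ (as in Proposition~\ref{theconstants}), one checks $c_ic_j\le c_{i+j}$ up to the bookkeeping of powers of $3$, and this is where $3^{1/3}>4^{1/4}$ lives. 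Two loose ends should be tightened in a final write-up, though neither threatens the argument. First, your ``such as'' in case (B) should become an exhaustive run through the six residue pairs: besides $i\equiv j\equiv 1\pmod 3$ (your $16<18$), the pair $i\equiv 1$, $j\equiv 2\pmod 3$ is also strict, with $g(i)g(j)=8\cdot 3^{(i+j-6)/3}<9\cdot 3^{(i+j-6)/3}=g(i+j)$; the remaining four pairs give equality. Second, $g(1)=1$ does not fit the $\tfrac43\cdot 3^{\lfloor k/3\rfloor}$ pattern, so the cases $i=1$ or $j=1$ of (B) need to be split off, either directly via $g(1)g(j)=g(j)\le g(j+1)$ by monotonicity, or by observing that replacing $g(1)=1$ by the pattern value $4/3$ only enlarges the left side, so the residue computation still yields the true inequality. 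With those repairs the induction closes exactly as you outline.
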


Based on this, in \cite{paper1}, this author and Zelinsky noted:
\begin{prop}
\label{dRformulae}
We have $\delta(1) =1$ and 
\begin{displaymath}
\delta(n)=\left\{ \begin{array}{ll}
3\log_3 \frac{E(\cpx{n})}{n}	& \mathrm{if}\quad \cpx{n}\equiv 0\pmod{3}, \\
3\log_3 \frac{E(\cpx{n})}{n} +2\,\delta(2)
	& \mathrm{if}\quad \cpx{n}\equiv 1\pmod{3},  \,\,  \mathrm{with} \; n >  1, \\
3\log_3 \frac{E(\cpx{n})}{n} +\delta(2)
	& \mathrm{if}\quad \cpx{n}\equiv 2\pmod{3}. 
\end{array} \right.
\end{displaymath}
\end{prop}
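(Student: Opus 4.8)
The plan is to prove the formula by direct substitution of Selfridge's closed forms for $E(k)$, splitting into cases according to the residue of $\cpx{n}$ modulo $3$, and checking in each case that the right-hand side collapses to the definition $\dft(n)=\cpx{n}-3\log_3 n$. The structure is forced: each branch of the claimed formula is selected by $\cpx{n}\bmod 3$, and Selfridge's theorem supplies $E(\cpx{n})$ by an identically-shaped case split, so the three branches match up one-to-one. The separate assertion $\dft(1)=1$ is immediate from $\cpx{1}=1$.

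First I would dispatch the case $\cpx{n}\equiv 0\pmod 3$. Writing $\cpx{n}=3j$, Selfridge gives $E(3j)=3^j$, so $3\log_3\frac{E(\cpx{n})}{n}=3\log_3\frac{3^j}{n}=3j-3\log_3 n=\cpx{n}-3\log_3 n=\dft(n)$; here no correction term is needed, which confirms the first branch and fixes the shape of the computation. For $\cpx{n}\equiv 2\pmod 3$, write $\cpx{n}=3j+2$, so $E(3j+2)=2\cdot 3^j$ and $3\log_3\frac{E(\cpx{n})}{n}=3\log_3 2+3j-3\log_3 n$. Comparing with $\dft(n)=3j+2-3\log_3 n$, the discrepancy is exactly $2-3\log_3 2$, which equals $\dft(2)$ since $\cpx{2}=2$; this is the required correction term in the third branch.

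The case $\cpx{n}\equiv 1\pmod 3$ is analogous but uses the hypothesis $n>1$. For $n>1$ we have $\cpx{n}\ge 4$, so I may write $\cpx{n}=3j+4$ with $j\ge 0$ and apply $E(3j+4)=4\cdot 3^j$; the gap between $3\log_3\frac{4\cdot 3^j}{n}$ and $\dft(n)=3j+4-3\log_3 n$ works out to $4-6\log_3 2$. Since $2\dft(2)=2(2-3\log_3 2)=4-6\log_3 2$, this matches the stated correction $2\,\dft(2)$. All three verifications are routine algebra once the correct value of $E$ is inserted; there is no genuine difficulty in the computations themselves.

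The one point requiring care — and the reason the second branch is stated only for $n>1$ — is the exclusion of $n=1$. The number $1$ has $\cpx{1}=1\equiv 1\pmod 3$, but $E(1)=1$ is precisely the exceptional value of Selfridge's formula that does \emph{not} fit the pattern $E(3j+4)=4\cdot 3^j$ (that pattern would demand $j=-1$). Consequently the expression $3\log_3\frac{E(\cpx{n})}{n}+2\,\dft(2)$ does not reproduce $\dft(1)=1$, which is exactly the degeneracy discussed around Definition~\ref{dftseta}; this is why $\dft(1)=1$ is recorded separately. In writing up each branch I would therefore note that the relevant index $j$ is nonnegative, which holds automatically once $n=1$ is peeled off, so that Selfridge's formula applies in the intended regime throughout.
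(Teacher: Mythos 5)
Your verification is correct in all three cases, and it is essentially the intended argument: the paper itself gives no proof of this proposition (it is quoted from \cite{paper1}), and the routine derivation there is exactly this direct substitution of Selfridge's formulae for $E(\cpx{n})$, with $\dft(2)=2-3\log_3 2$ and $3\log_3 4 = 6\log_3 2 = 2\dft(2)+4-4$ accounting for the correction terms. You also correctly identify why $n=1$ must be excluded from the $\cpx{n}\equiv 1\pmod 3$ branch ($E(1)=1$ does not fit the pattern $E(3j+4)=4\cdot 3^j$), which matches the paper's discussion following Definition~\ref{dftseta}.
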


That is to say, for $n>1$, given the congruence class of $\cpx{n}$ modulo $3$,
the quantity $nE(\cpx{n})^{-1}$ is a one-to-one and order-reversing function of
$\dft(n)$.

As noted above, whereas this author and Zelinsky considered $nE(\cpx{n})^{-1}$,
Arias de Reyna considered $n3^{-\floor{\cpx{n}/3}}$.  However, this is much the
same thing:

\begin{prop}
\label{theconstants}
For $k>1$,
\[ E(k) = c 3^\floor{\frac{k}{3}} \]
where
\begin{displaymath}
c=\left\{ \begin{array}{ll}
1	& \mathrm{if}\quad k\equiv 0\pmod{3}, \\
4/3 & \mathrm{if}\quad k\equiv 1\pmod{3}, \\
2 & \mathrm{if}\quad k\equiv 2\pmod{3}.
\end{array} \right.
\end{displaymath}
\end{prop}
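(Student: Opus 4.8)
The plan is to derive this statement as a direct rewriting of Selfridge's formulae (the theorem immediately preceding) by a case analysis on the residue of $k$ modulo $3$. In each congruence class I would compute $\floor{k/3}$ explicitly, then match the power of $3$ appearing in Selfridge's formula against $3^{\floor{k/3}}$ and read off the constant $c$. No new input beyond Selfridge's theorem is needed; the content is entirely a matter of re-indexing.

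First I would dispatch the two straightforward cases. If $k \equiv 0 \pmod{3}$, write $k = 3j$, so that $\floor{k/3} = j$; Selfridge gives $E(k) = 3^j = 1 \cdot 3^{\floor{k/3}}$, whence $c = 1$. If $k \equiv 2 \pmod{3}$, write $k = 3j+2$, again with $\floor{k/3} = j$; then $E(k) = 2 \cdot 3^j = 2 \cdot 3^{\floor{k/3}}$, so $c = 2$. In both cases the exponent in Selfridge's formula already equals $\floor{k/3}$, so nothing further is required.

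The only case demanding a small re-indexing is $k \equiv 1 \pmod{3}$, and here the hypothesis $k > 1$ is essential. It guarantees $k \geq 4$, so I may write $k = 3j + 4$ with $j = \floor{k/3} - 1 \geq 0$. Selfridge's formula then yields $E(k) = 4 \cdot 3^j = 4 \cdot 3^{\floor{k/3}-1} = \frac{4}{3}\cdot 3^{\floor{k/3}}$, giving $c = 4/3$. The excluded value $k=1$ instead satisfies $E(1) = 1 \neq \frac{4}{3}\cdot 3^{0}$, which is precisely why the proposition is stated only for $k > 1$.

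I expect no genuine obstacle: the entire proof is the bookkeeping that converts Selfridge's ``$3j+4$'' indexing into the ``$3j+1$'' form, absorbing the resulting factor of $3^{-1}$ into the constant. The single point that warrants care is verifying that $\floor{k/3}$ equals $j$ in the first two cases but equals $j+1$ in the third, and checking that the restriction $k>1$ keeps the exponent $j = \floor{k/3}-1$ nonnegative so that Selfridge's formula applies.
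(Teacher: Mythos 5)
Your proposal is correct and is exactly the argument the paper intends: the paper states Proposition~\ref{theconstants} without proof, treating it as an immediate re-indexing of Selfridge's formulae, which is precisely the case analysis you carry out (including the correct observation that $\floor{k/3}=j+1$ when $k=3j+4$, and that $k=1$ must be excluded since $E(1)=1\neq 4/3$). Nothing is missing.
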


So for $n>1$, within each congruence class of $\cpx{n}$ modulo $3$, the quantity
$n3^{-\floor{\cpx{n}/3}}$ is also a one-to-one and order-reversing function of
$\dft(n)$, being the same as $nE(\cpx{n})^{-1}$ up to a constant factor.

This allows us to conclude the following result, which is a modified version of
what one gets if one combines Arias de Reyna's Conjectures 5, 6, and 7 with his
Conjectures 3 and 4.

\begin{thm}
\label{jadrconj}
{\rm (Modified Arias de Reyna Conjectures 5, 6, 7)}

For $a=0,1,2$, the sets
\[ \left\{ \frac{n}{3^{\lfloor \cpx{n}/3 \rfloor}} : \cpx{n}\equiv a \pmod{3},
~~n\  \mbox{stable} \right\} \]
are reverse well-ordered, with reverse order type $\omega^\omega$.

Equivalently, for $a=0,1,2$, so are the sets
\[ \left\{ \frac{n}{E(\cpx{n})} : \cpx{n}\equiv a \pmod{3},~~n\  \mbox{stable} \right\}. \]
\end{thm}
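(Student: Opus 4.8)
The plan is to realize each of the target sets as the order-reversing image of a stable defect set $\mathscr{D}^a_\st$, whose order type was already pinned down in Theorem~\ref{thmsmall}. Fix a congruence class $a$. Any stable number $n$ with $\cpx{n}\equiv a\pmod 3$ is necessarily greater than $1$ (since $1$ is not stable), so Proposition~\ref{dRformulae} applies: within this fixed class the quantity $nE(\cpx{n})^{-1}$ is a strictly order-reversing function of $\dft(n)$. Combining this with Proposition~\ref{theconstants}, which gives $n3^{-\floor{\cpx{n}/3}}=c\,nE(\cpx{n})^{-1}$ for a positive constant $c=c(a)$, I get that $n3^{-\floor{\cpx{n}/3}}$ is likewise a strictly order-reversing function of $\dft(n)$ on this class. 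Thus the assignment $\dft(n)\mapsto n3^{-\floor{\cpx{n}/3}}$ is a well-defined, strictly order-reversing bijection from $\mathscr{D}^a_\st$ onto the first set in the statement.

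First I would record a direct check of well-definedness, so that the map genuinely factors through the defect value. If $n,m$ are both stable with complexity $\equiv a\pmod 3$ and $\dft(n)=\dft(m)$, then $m=3^k n$ for some integer $k$ by parts (3) and (5) of Theorem~\ref{oldprops}; stability gives $\cpx{m}=\cpx{n}+3k$, hence $\floor{\cpx{m}/3}=\floor{\cpx{n}/3}+k$ and $m3^{-\floor{\cpx{m}/3}}=n3^{-\floor{\cpx{n}/3}}$. So the value depends only on $\dft(n)$, and strict injectivity across distinct defects is exactly the order-reversing statement following Proposition~\ref{dRformulae}.

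The remaining step is purely order-theoretic. An order-reversing bijection carries a well-ordered set $A$ of order type $\lambda$ to a set $S$ for which $(S,\ge)$ is order-isomorphic to $(A,\le)$; thus $S$ is reverse well-ordered with reverse order type $\lambda$. By Theorem~\ref{thmsmall}, $\mathscr{D}^a_\st$ is well-ordered of order type $\omega^\omega$, so its image under $\dft(n)\mapsto n3^{-\floor{\cpx{n}/3}}$ is reverse well-ordered of reverse order type $\omega^\omega$, as claimed. The equivalence of the two displayed families then follows at once from $n3^{-\floor{\cpx{n}/3}}=c\,nE(\cpx{n})^{-1}$: multiplication by the positive constant $c$ is an order isomorphism of $\R$ and so preserves the reverse order type.

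I do not expect a substantive obstacle here, since Theorem~\ref{thmsmall} already carries the analytic and order-theoretic weight of computing the order type of $\mathscr{D}^a_\st$. The only points requiring care are bookkeeping: confirming the map is a function of the defect value alone (handled above), verifying that excluding $n=1$ causes no loss (it cannot appear, being unstable), and stating precisely that ``reverse order type $\omega^\omega$'' means $(S,\ge)$ is isomorphic to $\omega^\omega$ — which is exactly what an order-reversing bijection from a set of order type $\omega^\omega$ produces.
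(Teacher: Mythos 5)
Your proposal is correct and follows exactly the paper's route: the paper's own proof is the one-line observation that, by Propositions~\ref{dRformulae} and \ref{theconstants}, each set is the image of $\mathscr{D}^a_\st$ (whose order type $\omega^\omega$ comes from Theorem~\ref{thmsmall}) under an order-reversing function. Your additional bookkeeping --- checking well-definedness via $m=3^k n$ and stability, noting $n=1$ is excluded automatically because it is unstable, and spelling out that an order-reversing bijection converts order type into reverse order type --- merely makes explicit what the paper leaves implicit.
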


\begin{proof}
By Propositions~\ref{dRformulae} and \ref{theconstants}, each of these is the
image of some $\mathscr{D}^a_\st$ under an order-reversing function.
\end{proof}


\begin{thebibliography}{99}
\bibitem{seq2} H.~Altman, Integer Complexity: Computational Methods and
Results, in preparation.
\bibitem{seq3} H.~Altman, Integer Complexity: The Integer Defect, in
preparation.
\bibitem{paper1} H.~Altman and J.~Zelinsky, Numbers with Integer
Complexity Close to the Lower Bound, {\it Integers} {\bf 12} (2012), no.~6,
1093--1125.
\bibitem{Arias} J.~Arias de Reyna, Complejidad de los n\'umeros naturales,
{\it Gac.\ R.\ Soc.\ Mat.\ Esp.} {\bf 3} (2000), 230--250.
\bibitem{borh} P.~Borwein and J.~Hobart, The Extraordinary Power of Division
in Striaght Line Programs, {\it American Mathematical Monthly} {\bf 119}
(2012), 584--592.
\bibitem{burgbook} P.~B\"urgisser, M.~Clausen, and M.~A.~Shokrollahi,
{\it Algebraic Complexity Theory}, Springer-Verlag, Berlin, 1997
\bibitem{carruth} P.~W.~Carruth, Arithmetic of ordinals with applications to
the theory of ordered abelian groups, {\it Bull.\ Amer.\ Math.\ Soc.} {\bf 48}
(1942), 262--271.
\bibitem{ONAG} J.~H.~Conway, {\it On Numbers and Games}, Second Edition, A K
Peters, Ltd., Natick, Massachusetts, 2001, pp.~3--14.
\bibitem{wpo} D.~H.~J.~De Jongh and R.~Parikh, Well-partial orderings and
hierarchies, {\it Indag.\ Math.} {\bf 39} (1977), 195-206.
\bibitem{FGK13}
S.~Fomin, D.~Grigoriev and G.~Koshevoy,
Subtraction-free complexity, cluster transformations and spanning trees,
arXiv:1307.8425.
\bibitem{GJ79}
M. ~Garey and D. S. Johnson,
{\it Computers and Intractability: A Guide to the Theory of NP-completeness},
Freeman: San Francisco 1979
\bibitem{grigo} D.~Grigoriev, Lower bounds in algebraic complexity, {\it J.\
Soviet Math.} {\bf 29} (1985), 1388--1425.
\bibitem{Guy} R.~K.~Guy, 
Some suspiciously simple sequences,
 {\it Amer.\ Math.\ Monthly}, {\bf 93} (1986), 186--190;
 and see {\bf 94} (1987), 965 \& {\bf 96} (1989), 905.
\bibitem{UPINT} R.~K.~Guy, {\it Unsolved Problems in Number Theory},
Third Edition, Springer-Verlag, New York, 2004, pp.~399--400.
\bibitem{data2} J.~Iraids, K.~Balodis, J.~\v{C}er\c{n}enoks,
M.~Opmanis, R.~Opmanis, K.~Podnieks.
Integer Complexity: Experimental and Analytical results, arXiv:1203.6462, 2012
\bibitem{jersnir} M.~Jerrum and M.~Snir, Some Exact Complexity Results for
Straight-Line Computations over Semirings, {\it J.\ ACM} {\bf 29} (1982),
874--897.
\bibitem{TAOCP2} D.~E.~Knuth, {\it The Art of Computer Programming}, Vol.\ 2,
Third Edition, Addison-Wesley, Reading, Massachusetts, pp.~461--485
\bibitem{MP} K.~Mahler and J.~Popken, On a maximum problem in arithmetic
 (Dutch), {\it Nieuw Arch.\ Wiskunde}, (3) {\bf 1} (1953),
 1--15; {\it MR} {\bf 14}, 852e.
\bibitem{schnorr} C.~P.~Schnorr, A lower bound on the number of additions in
monotone computations, {\it Theor.\ Comput.\ Sci.} {\bf 2}, (1976), 305--315.
\bibitem{semadeni} Z.~Semadeni, {\it Banach Spaces of Continuous functions},
Vol.\ I, Monografie Matematyczne, Tom 55. PWN---Polish Scientific Publishers,
Warsaw, 1971.
\bibitem{valiant} L.~G.~Valiant, Negation can be exponentially powerful, {\it
Theor.\ Comput.\ Sci.} {\bf 12}, (1980), 303--314.
\bibitem{upbds} J.~Zelinsky, An Upper Bound on Integer Complexity, in
preparation
\end{thebibliography}
\end{document}